\newcommand{\pdfgraphics}{\ifpdf\DeclareGraphicsExtensions{.pdf,.jpg}\else\fi}
\definecolor{citegreen}{rgb}{0,0.6,0}
\definecolor{refred}{rgb}{0.8,0,0}
\numberwithin{equation}{section}
\theoremstyle{plain}
\newtheorem{teo}{Theorem}[section]
\newtheorem{lemma}[teo]{Lemma}
\newtheorem{prop}[teo]{Proposition}
\newtheorem{cor}[teo]{Corollary}
\theoremstyle{definition}
\newtheorem{dfnz}[teo]{Definition}
\theoremstyle{remark}
\newtheorem{rem}[teo]{Remark}
\numberwithin{equation}{section}
\def\eps{\varepsilon}
\def\limup{\operatornamewithlimits{\overline{\lim}}}
\def\loc{_{\operatorname{loc}}}
\def\NN{\mathbb N}
\def\R{\mathbb R}
\renewcommand{\t }{\tau }
\newcommand{\intbar}{\etaathop{\int\etaakebox(-13.5,0){\rule[4pt]{.7em}{0.3pt}}
\kern-6pt}\nolimits}
\newcommand{\be}{\begin{equation}}
\newcommand{\ee}{\end{equation}}
\newcommand{\bea}{\begin{equation*}}
\newcommand{\eea}{\end{equation*}}
\def\loc{_{\operatorname{\rm loc}}}
\def\R{{{\mathbb R}}}
\def\SS{{{\mathbb S}}}
\def\NN{{{\mathbb N}}}
\def\eps{\varepsilon}
\def\tt{\mathfrak t}
\def\ss{\mathfrak s}
\def\comp{\circ}
\def\be{\begin{equation}}
\def\ee{\end{equation}}
\def\bea{\begin{eqnarray*}}
\def\bean{\begin{eqnarray}}
\def\eean{\end{eqnarray}}
\def\eea{\end{eqnarray*}}
\begin{document}
\pdfgraphics % Use this command right after \begin{document}

\title{Motion by curvature of networks with two triple junctions}

\author{Carlo Mantegazza \footnote{Dipartimento di Matematica e Applicazioni, Universit\`a di Napoli Federico II, Via Cintia, Monte S. Angelo
80126 Napoli, Italy} \and Matteo Novaga \footnote{Dipartimento di Matematica, Universit\`a di Pisa, Largo
    Bruno Pontecorvo 5, 56127 Pisa, Italy} \and Alessandra Pluda\footnotemark[2]}
\date{}

\maketitle

\begin{abstract}
\noindent We consider the evolution by curvature of 
a general embedded network with two triple junctions. 
We classify the possible singularities  and we discuss the long time existence
of the evolution.
\end{abstract}

\textbf{Mathematics Subject Classification (2010)}: 53C44 (primary); 53A04, 35K55 (secondary).

\tableofcontents

\section{Introduction}

In this paper we study the motion by curvature of connected networks with two triple junctions
in an open and (strictly) convex set $\Omega\subset\mathbb{R}^2$, with fixed end--points on the boundary $\partial\Omega$.
During the evolution we require that the curves remain concurring at the triple junctions
forming angles of $120$ degrees (Herring condition).
As the evolution can be regarded as the gradient flow of the length functional,
the Herring condition turns out to be the natural one from the variational point of view and is related to the local stability of the triple junctions. We will call {\em regular} the networks with only triple junctions each one satisfying the Herring condition.

The evolution by curvature of a planar simple closed curve is by now completely understood: the curve becomes eventually convex 
and then shrinks to a point in finite time, asymptotically approaching the shape of a round circle (see~\cite{gaha1,gray1}).

Concerning the evolution of a general planar network the problem is far to be fully solved. A first short time existence result 
was proved by Bronsard and Reitich in~\cite{bronsard} for an initial regular network with only one triple junction and 
three end--points on the boundary of the domain (a so--called ``triod'') with Neumann boundary conditions, and then adapted for the case with Dirichlet boundary conditions in~\cite{mannovtor}.
An analogous theorem for a general network is shown  in~\cite{mannovplusch} and says that
for any initial smooth regular network there exists a smooth flow by curvature in a positive maximal time interval $[0,T)$. 

About the global behavior of the flow, in~\cite{MMN13,mannovtor} the authors study the evolution by curvature of a triod, showing that 
if the lengths of the three curves are bounded away from zero during the evolution,
then the evolution is smooth for every time and the triod tends to the unique Steiner configuration connecting the three fixed end--points. 

The simplest case of a network with a loop (a region bounded by one or more curves) is treated in~\cite{pluda}: a network 
composed by two curves, one of them closed, meeting only at one point. In this case, if the length of the non--closed curve is bounded away 
from zero during the evolution, the closed curve shrinks to a point after a finite time which depends only on the area initially enclosed in the loop.

In this paper, we consider networks with exactly two triple junctions and we obtain an almost complete description of the evolution till the 
appearance of the first singularity, adapting the techniques of~\cite{MMN13,pluda}.

The major open problem in the general context of the motion by curvature of networks, is the so--called multiplicity--one conjecture:
if the initial network $\mathbb{S}_0$ is embedded, not only $\mathbb{S}_t$ remains embedded for 
all the times, but also every possible $C^1\loc$--limit of rescalings of networks of the flow is an
embedded network with multiplicity one. This is a crucial ingredient in classifying blow--up limits of the flow, 
which is the main method to understand the singularity formation.\\
In Section~\ref{dsuL} we will introduce a geometric quantity which, by means of a monotonicity property it satisfies, can be used to prove the multiplicity--one conjecture in the case of networks with at most two triple junctions (similar quantities have already been used by Hamilton in~\cite{hamilton3} and Huisken in~\cite{huisk3}). Unfortunately, this argument cannot be extended to networks
with more that two triple junctions, as the analogous quantity does not share such monotonicity property anymore.

The following theorem and Proposition~\ref{kscoppia} are the main result of the paper, describing the behavior of the network at the singular time.

\begin{teo}\label{main}
Let $\Omega\subset\mathbb{R}^2$ be a smooth, strictly convex, open set. 
Let $\mathbb{S}_0$ be a compact initial network with two triple junctions and  with possibly
fixed end--points on $\partial\Omega$, and
let $\mathbb{S}_t$ be the smooth evolution by curvature
of $\mathbb{S}_0$ in a maximal time interval $[0,T)$.

Then,  if the network $\mathbb{S}_0$ has at least one loop, then the maximal time of existence $T$ is finite and one of the following situations occurs:
\begin{enumerate}
\item the limit of the length of a curve that connects the two $3$--points goes to zero as $t\to T$, 
and the curvature remains bounded;
\item the limit of the length of a curve that connects the $3$--point with an end--point goes to zero as $t\to T$, 
and the curvature remains bounded;
\item  the lengths of the curves composing the loop go to zero as $t\to T$, and $\lim_{t\to T}\int_{\mathbb{S}_t}k^2\,ds=+\infty$.
\end{enumerate}

If the network is a tree and $T$ is finite, the curvature is uniformly bounded and only the first two situations listed above can happen. If instead $T=+\infty$, for every sequence of times $t_i\to+\infty$, there exists a subsequence (not relabeled) such that the evolving networks $\SS_{t_i}$ converge in 
$C^{1,\alpha}\cap W^{2,2}$, for every $\alpha\in(0,1/2)$,
to a possibly degenerate (see Section~\ref{degsec}) regular network with zero curvature (hence, ``stationary'' for the length functional), as $i\to\infty$.
\end{teo}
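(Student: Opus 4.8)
The plan is to split the analysis according to whether the curvature stays bounded up to the maximal time $T$, and to reduce this dichotomy to the boundedness of the single quantity $\int_{\mathbb{S}_t}k^2\,ds$. First I would record the structural facts. Since the flow is the $L^2$--gradient flow of the length, the total length $L(t)$ is nonincreasing and $\tfrac{d}{dt}L(t)=-\int_{\mathbb{S}_t}k^2\,ds\le 0$, the junction and fixed--endpoint boundary terms vanishing thanks to the Herring and Dirichlet conditions; in particular $\int_0^T\!\int_{\mathbb{S}_t}k^2\,ds\,dt\le L(0)<\infty$. By the continuation criterion of~\cite{mannovplusch} the smooth flow persists as long as the curvature is bounded and all the lengths stay bounded away from zero; combined with a Gagliardo--Nirenberg interpolation on each curve (controlling $\max|k|$ by $\int_{\mathbb{S}_t}k^2\,ds$ and lower--order terms), this shows that the curvature blows up at $T$ \emph{if and only if} $\int_{\mathbb{S}_t}k^2\,ds\to+\infty$, and that otherwise the only obstruction to continuation is the collapse of some length. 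Finiteness of $T$ in the presence of a loop follows from the area argument: each loop bounds a region of area $A(t)$ with $\tfrac{d}{dt}A(t)=-\int_{\partial(\mathrm{loop})}k\,ds$, and by Gauss--Bonnet this equals a strictly negative constant fixed by the $120^\circ$ turning at the junctions on the loop; hence $A(t)$ vanishes in finite time and $T<\infty$.

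For the bounded--curvature case (situations $1$ and $2$), if $\int_{\mathbb{S}_t}k^2\,ds$ stays bounded then so does the curvature, and by the continuation criterion some length must tend to zero as $t\to T$. I would then exclude collapse of a loop with bounded curvature: a shrinking loop bounds a vanishing area, while a blow--up at the collapsing region — obtained from Huisken's monotonicity formula as in~\cite{huisk3} together with the multiplicity--one property proved in Section~\ref{dsuL} — would produce a flat, hence area--less, limit, contradicting the shrinking of a genuinely enclosed region unless the curvature diverges. Therefore only a curve joining the two triple points (situation $1$) or a curve joining a triple point to an endpoint (situation $2$) can collapse, and in both cases the curvature remains bounded.

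For the unbounded--curvature case (situation $3$), where $\int_{\mathbb{S}_t}k^2\,ds\to+\infty$, I would carry out the parabolic rescaling around the points of curvature concentration, following~\cite{MMN13,pluda}. Huisken's monotonicity formula yields uniform Gaussian--density bounds, so the rescaled flows subconverge, and the multiplicity--one result of Section~\ref{dsuL} guarantees that the limit is an embedded, multiplicity--one, self--similarly shrinking regular network. Since the enclosed area of the loop has already been shown to vanish at $T$, the only consistent picture is that the blow--up is localized at the loop and the loop's curves collapse, the divergence of $\int_{\mathbb{S}_t}k^2\,ds$ being precisely situation $3$. In the tree subcase with $T<+\infty$, the absence of loops removes any vanishing enclosed area that could support a nontrivial shrinker, so the blow--up limit must be flat and the curvature cannot blow up; hence only situations $1$ and $2$ survive.

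For the long--time behavior of a tree with $T=+\infty$, from $\int_0^\infty\!\int_{\mathbb{S}_t}k^2\,ds\,dt<\infty$ together with a uniform bound on $\tfrac{d}{dt}\int_{\mathbb{S}_t}k^2\,ds$ (from the evolution equation of the curvature and the now--available uniform smoothness estimates) I would deduce $\int_{\mathbb{S}_t}k^2\,ds\to 0$ as $t\to+\infty$. Given any $t_i\to+\infty$, the uniform bound on $\int_{\mathbb{S}_{t_i}}k^2\,ds$ gives, after arclength reparametrization, a uniform $W^{2,2}$ bound; the compact embedding $W^{2,2}\hookrightarrow C^{1,\alpha}$ for $\alpha<1/2$ then produces a subsequence converging in $C^{1,\alpha}\cap W^{2,2}$ to a network with $\int k^2\,ds=0$, that is, a regular network of straight segments — possibly degenerate in the sense of Section~\ref{degsec}, if some length has vanished in the limit — which is stationary for the length. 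The main obstacle is the blow--up analysis: showing that the rescaled flows converge to a well--defined self--similar limit and, above all, deploying the monotonicity quantity of Section~\ref{dsuL} to secure multiplicity one, since it is this that permits the classification of admissible shrinkers and thereby the separation of the three situations — in particular excluding loop collapse with bounded curvature and forcing it when the curvature diverges.
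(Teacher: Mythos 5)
Your skeleton coincides with the paper's (Gauss--Bonnet/area evolution for the finiteness of $T$, Huisken rescaling plus the multiplicity--one property of Section~\ref{dsuL} for the blow--up analysis, and the $L^2$--decay argument for $T=+\infty$, which is essentially identical to the paper's), but the central analytic step is missing: the proof that the curvature stays \emph{bounded} in situations 1 and 2. Your reduction ``curvature blows up at $T$ if and only if $\int_{\SS_t}k^2\,ds\to+\infty$'' via Gagliardo--Nirenberg is not justified: interpolating $\Vert k\Vert_{L^\infty}$ against $\Vert k\Vert_{L^2}$ requires control of $\Vert k_s\Vert_{L^2}$ (see inequality~\eqref{inkappa}) or lower bounds on the lengths, and both fail exactly in the regime at issue, where a curve is collapsing. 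More seriously, when the curve joining the two $3$--points collapses, the blow--up limit is the \emph{degenerate} shrinker made of four halflines meeting at $120/60$ degrees, whose Gaussian density at the origin is $2$ (the boundary case of two halflines at $120$ degrees reduces to it by reflection); White's local regularity theorem therefore does not apply, and the implication ``flat blow--up limit $\Rightarrow$ locally bounded curvature'', which you assert both here and in the tree case, is precisely the content of Proposition~\ref{cross}: the paper proves it by feeding the $W^{2,2}_{\mathrm{loc}}$ convergence (smallness of the rescaled $L^2$--curvature in a ball at time $t_j$) into local parabolic estimates that propagate the curvature bound up to $T$. Without an argument of this kind your dichotomy does not separate cases 1--2 from case 3.

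Two further slips. You conflate the bound $T\le 3A(0)/\bigl((6-m)\pi\bigr)$ with equality: the loop's area vanishes at $T$ only when $T$ coincides with this natural collapse time, and the paper's case analysis splits exactly on whether $T$ is strictly smaller (the network is then locally uniformly a tree, every blow--up is loop--free hence flat, the curvature is bounded, and cases 1--2 occur) or equal (case 3); your sentence ``the enclosed area of the loop has already been shown to vanish at $T$'', used to localize the blow--up at the loop, is false in the first regime. Also, your exclusion of loop collapse with bounded curvature via ``a flat, hence area--less, limit'' ignores that under $C^1_{\mathrm{loc}}$ convergence of the rescalings an enclosed region can escape to infinity or collapse onto a core --- exactly the scenarios the paper must treat by hand in Proposition~\ref{vanishing}. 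The paper's route (Proposition~\ref{loop}) is elementary and avoids blow--ups entirely: Gauss--Bonnet gives $(6-m)\pi/3=\bigl\vert\frac{dA}{dt}\bigr\vert=\bigl\vert\int_{\ell_t}k\,ds\bigr\vert\le L(t)^{1/2}\bigl(\int_{\ell_t}k^2\,ds\bigr)^{1/2}$, hence $\int_{\SS_t}k^2\,ds\ge \bigl((6-m)\pi/3\bigr)^2/L(t)\to+\infty$ when the loop's length vanishes; substituting this for your heuristic would repair that point, but the gap around Proposition~\ref{cross} remains the essential one.
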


To prove this theorem, following~\cite{MMN13,mannovtor}, we analyze the blow--up limits arising from a sequence of rescaled networks (see Section~\ref{rescaling} and Proposition~\ref{resclimit}). Such limit regular networks satisfy 
the shrinkers equation, that is,
$$
\underline{k} + x^\perp=0,
$$ 
where $\underline{k} $ is the curvature vector and $x^\perp$ the normal component of the position vector $x$ and, assuming they have at most two triple junctions, a classification is complete (see~\cite{balhausman2}), thanks to the contributions in~\cite{balhausman,chenguo,haettenschweiler,schnurerlens} (we underline that for more complicated topological structure -- more than two triple junctions -- such a classification is not available at the moment). Then, by means of White's local regularity theorem in~\cite{white1}, the works~\cite{Ilnevsch,MMN13} and Proposition~\ref{cross}, we will see that if a flow has a flat blow--up limit around a point, its curvature is locally bounded.

As a consequence of this analysis and the classification of shrinkers, we have the following proposition dealing with the situation when the curvature is unbounded.

\begin{prop}\label{kscoppia}
Let $\mathbb{S}_t$ be as in Theorem \ref{main}.
If $\lim_{t\to T}\int_{\mathbb{S}_t}k^2\,ds=+\infty$, then 
there exists a point $x_0\in\Omega$ such that for a sequence of rescaled times
$\tt_j$, the associated rescaled networks $\widetilde{\mathbb{S}}_{x_0,\tt_j}$
defined in Section \ref{rescaling} tend
to one of the non--flat shrinkers with one or two triple junctions, that is, the sequence
$\widetilde{\mathbb{S}}_{x_0,\tt_j}$ converges in $C^{1,\alpha}\loc\cap W^{2,2}\loc$,
for every $\alpha\in(0,1/2)$, as $j\to\infty$, to:
\begin{enumerate}
\item a Brakke spoon;
\item a standard lens; 
\item a fish.
\end{enumerate}
\begin{figure}[H]
\begin{center}
\begin{tikzpicture}[scale=0.6]
%spoon
\draw[color=black, scale=0.4, shift={(-30.45,0)}]
(-3.035,0)to[out= 60,in=180, looseness=1] (2.2,3)
(2.2,3)to[out= 0,in=90, looseness=1] (5.43,0)
(-3.035,0)to[out= -60,in=180, looseness=1] (2.2,-3)
(2.2,-3)to[out= 0,in=-90, looseness=1] (5.43,0);
\draw[color=black, scale=0.4, shift={(-30.45,0)}]
(-7,0)to[out= 0,in=180, looseness=1](-3,0);
\draw[color=black, scale=0.4, shift={(-30.45,0)}, dashed]
(-9,0)to[out= 0,in=180, looseness=1](-7,0);
%lente
\draw[color=black,scale=0.4, shift={(-10.5,0)}]
(-3.035,0)to[out= 60,in=180, looseness=1] (2.2,2.8)
(2.2,2.8)to[out= 0,in=120, looseness=1] (7.435,0)
(2.2,-2.7)to[out= 0,in=-120, looseness=1] (7.435,0)
(-3.035,0)to[out= -60,in=180, looseness=1] (2.2,-2.7);
\draw[color=black,scale=0.4, shift={(-10.5,0)}]
(-7,0)to[out= 0,in=180, looseness=1](-3.035,0)
(7.435,0)to[out= 0,in=180, looseness=1](11.4,0);
\draw[color=black,dashed,scale=0.4, shift={(-10.5,0)}]
(-9,0)to[out= 0,in=180, looseness=1](-7,0)
(11.4,0)to[out= 0,in=180, looseness=1](13.4,0);
%PESCE BLU
\draw[color=black,shift={(4.733,0)},scale=1.5]
(-0.47,0)to[out= 20,in=180, looseness=1](1.5,0.65)
(1.5,0.65)to[out= 0,in=90, looseness=1] (2.37,0)
(-0.47,0)to[out= -20,in=180, looseness=1](1.5,-0.65)
(1.5,-0.65)to[out= 0,in=-90, looseness=1] (2.37,0);
\draw[white, very thick,shift={(4.733,0)},scale=1.5]
(-0.47,0)--(-.150,-0.13)
(-0.47,0)--(-.150,0.13);
\draw[color=black,shift={(4.733,0)},scale=1.5]
(-.150,0.13)to[out= -101,in=90, looseness=1](-.18,0)
(-.18,0)to[out= -90,in=101, looseness=1](-.150,-0.13);
%retta a 139 per l'origine
\draw[black,shift={(4.733,0)},scale=1.5]
(-.150,0.13)--(-1.13,0.98)
(-.150,-0.13)--(-1.13,-0.98);
\draw[black, dashed,shift={(4.733,0)},scale=1.5]
(-1.13,0.98)--(-1.50,1.31)
(-1.13,-0.98)--(-1.50,-1.31);
\fill(-3.3,0) circle (2pt);
%\fill(-7,0) circle (2pt);
\fill(-12.33,0) circle (2pt);
\fill(4.733,0) circle (2pt);
%\fill(-8.33,-6) circle (2pt);
%\fill(2,-6) circle (2pt);
\path[font=\small]
%(1.7,-5.8) node[left]{$O$}
%(-8.33,-5.8) node[left]{$O$}
%(-7,0) node[left]{$O$}
(-11.43,0.2) node[left]{$O$}
(5.7,0.1) node[left]{$O$}
(-2.43,0.2) node[left]{$O$};
\path[font=\small]
(-15.33,-3) node[right]{$1)\;\text{Brakke spoon}$}
(5,-3) node[right]{$3)\;\text{fish}$}
(-6,-3) node[right]{$2)\;\text{standard lens}$};
\end{tikzpicture}
\end{center}
\begin{caption}
{Non--flat regular shrinkers with one or two triple junctions.\label{clas0}}
\end{caption}
\end{figure}
\end{prop}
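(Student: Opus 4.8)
\section*{Proof proposal}

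The plan is to localize the curvature blow--up at a single point, extract a self--similar blow--up limit there by the rescaling procedure, and then identify that limit through the classification of shrinkers, ruling out the flat alternatives by White's theorem. First I would use the hypothesis $\lim_{t\to T}\int_{\mathbb{S}_t}k^2\,ds=+\infty$ to produce a concentration point. Along the flow the total length $\mathcal{H}^1(\mathbb{S}_t)$ is non--increasing and $\mathbb{S}_t\subset\overline{\Omega}$ with $\overline{\Omega}$ compact, so a uniform bound on the curvature up to time $T$ would keep $\int_{\mathbb{S}_t}k^2\,ds$ bounded; hence the curvature is unbounded as $t\to T$. Reformulating this via the monotonicity formula for the flow (the monotone quantity of Section~\ref{dsuL}, together with the boundary contributions from the fixed end--points and from the moving triple junctions), the Gaussian density $\Theta(x_0)$ exists at every reachable point $x_0$, and curvature concentration is equivalent to the existence of some $x_0\in\overline{\Omega}$ at which $\Theta(x_0)$ strictly exceeds the density of every flat configuration that can occur at $x_0$.

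Second, having fixed such a point $x_0$, I would apply the rescaling of Section~\ref{rescaling} and invoke Proposition~\ref{resclimit}: along a suitable sequence of rescaled times $\tt_j\to+\infty$ the rescaled networks $\widetilde{\mathbb{S}}_{x_0,\tt_j}$ converge in $C^{1,\alpha}\loc\cap W^{2,2}\loc$, for every $\alpha\in(0,1/2)$, to a (possibly degenerate) regular network satisfying the shrinker equation $\underline{k}+x^\perp=0$. The multiplicity--one property established in Section~\ref{dsuL} guarantees that this limit is an embedded network of multiplicity one, and its topological complexity cannot increase in the limit, so it has at most two triple junctions. The classification of~\cite{balhausman2}, building on~\cite{balhausman,chenguo,haettenschweiler,schnurerlens}, then applies and forces the blow--up limit to be either flat or one of the three non--flat shrinkers: a Brakke spoon, a standard lens, or a fish.

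Third, I would exclude the flat case by contradiction. If the blow--up limit at $x_0$ were flat, then by White's local regularity theorem~\cite{white1}, combined with~\cite{Ilnevsch,MMN13} and Proposition~\ref{cross} (which handle the triple--junction and crossing configurations), the curvature of the flow would be locally bounded in a space--time neighbourhood of $(x_0,T)$. This contradicts the choice of $x_0$ as a point of curvature concentration, that is, the strict super--flatness of $\Theta(x_0)$. Hence the limit cannot be flat, and by the classification of the previous step it must be a Brakke spoon, a standard lens, or a fish, which is exactly the assertion.

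The main obstacle I expect is the first step: passing from the global divergence $\int_{\mathbb{S}_t}k^2\,ds\to+\infty$ to a genuine concentration point with strictly super--flat Gaussian density, together with the correct bookkeeping of the density thresholds. The difficulty is intrinsically geometric, because a regular triple junction in a static flat configuration already carries Gaussian density $3/2$, so the dichotomy \emph{flat versus non--flat} must be read stratum by stratum, distinguishing interior regular points from triple junctions, and one must verify that the monotone quantity of Section~\ref{dsuL} — in the presence of the fixed boundary points on $\partial\Omega$ and of the moving junctions — still admits a limit that detects non--flatness. Ensuring that the contrapositive of White's theorem, through Proposition~\ref{cross}, applies uniformly at both types of points is the delicate technical heart of the argument.
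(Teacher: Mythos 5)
Your outline coincides with the paper's proof in its second and third steps: rescale at a concentration point via Proposition~\ref{resclimit}, invoke the multiplicity--one property of Section~\ref{dsuL} (Corollary~\ref{m12trips}) so that the classification behind Proposition~\ref{possiblelimit} applies, and exclude all the zero--curvature limits by Theorem~\ref{regularity} (White's theorem together with~\cite{Ilnevsch,MMN13}) and Proposition~\ref{cross}, leaving only the Brakke spoon, the standard lens and the fish.

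Where you deviate, and where your formulation as written would not go through, is the first step. You propose to detect the point $x_0$ through Gaussian densities, asserting that curvature concentration is \emph{equivalent} to $\widehat{\Theta}(x_0)$ strictly exceeding the density of every flat configuration at $x_0$. No such density dichotomy exists in this setting: the degenerate flat limit of four halflines meeting at $120/60$ degrees has Gaussian density $2$, strictly larger than the density of genuinely non--flat configurations (the unit circle, hence the loop of a Brakke spoon, carries density $\sqrt{2\pi/e}\approx 1.52$), so flatness of the blow--up cannot be read off a density threshold, stratum by stratum or otherwise. This is precisely why the paper proves Proposition~\ref{cross} by a direct PDE argument — the $W^{2,2}\loc$--convergence gives smallness of $\int k^2$ on parabolic balls, and the local estimates of~\cite{mannovplusch} then propagate the curvature bound up to $T$ — rather than through a White--type density gap. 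The paper's selection of $x_0$ is softer and needs no density bookkeeping: if at every reachable point some rescaled sequence had a flat (possibly degenerate) limit, then Theorem~\ref{regularity} and Proposition~\ref{cross}, which hold without assuming bounded curvature (Remark~\ref{regrem}), would give locally bounded curvature near every point of the compact reachable set $R$, hence a global bound $\sup_{\SS_t}\vert k\vert\leq C$, and since the total length decreases this forces $\int_{\SS_t}k^2\,ds\leq C^2L(0)$, contradicting the hypothesis; at the resulting $x_0\in R$ every blow--up limit is then non--flat, and Proposition~\ref{possiblelimit} concludes. Your proposal also leaves implicit why $x_0\in\Omega$ rather than on $\partial\Omega$: at end--points only flat limits (a halfline, or two halflines at $120$ degrees) can occur, and in the three non--flat shrinkers the homothety center lies inside the enclosed region. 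Finally, a citation slip worth fixing: the monotone quantity of Section~\ref{dsuL} is the embeddedness measure $E(t)$, which enters only through multiplicity one; the monotonicity controlling the Gaussian densities is Huisken's formula of Section~\ref{rescaling}.
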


It is worth mentioning that by this proposition it follows that the network cannot completely vanish shrinking at a point, as $t\to T$ (Proposition~\ref{vanishing}).

We resume here the structure of the paper: in Section~\ref{classi} we set up the basic notation and definitions and we classify all topological types of networks with two triple junctions.
In Section~\ref{preliminary} we state the short time existence theorem and we introduce all the main properties and useful notions for the subsequential analysis.  Section~\ref{dsuL} is devoted to prove the multiplicity--one conjecture for evolving networks with at most two triple junctions.
Then, in Section~\ref{longtime}, after the analysis of the possible blow--up networks in different situations we prove Theorem~\ref{main}. We conclude the paper with a description of the way to possibly restart the flow after a singularity.

\section*{Acknowledgments}
The second and the third authors are partially supported by the University of Pisa grant PRA--2015--0017.

\section{Networks with two triple junctions and their curvature flow}\label{classi}

Given a $C^1$ curve $\sigma:[0,1]\to\R^2$ we say that it is {\em regular} if
$\sigma_x=\frac{d\sigma}{dx}$ is never zero. It is then well defined
its {\em unit tangent vector} $\tau=\sigma_{x}/|\sigma_{x}|$. We define its {\em unit normal vector} as
$\nu=R\tau=R\sigma_{x}/|\sigma_{x}|$, 
where $R:\mathbb{R}^{2}\rightarrow\mathbb{R}^{2}$ is the counterclockwise
rotation centered in the origin of $\mathbb{R}^{2}$ of angle
${\pi}/{2}$.\\
If the curve $\sigma$ is $C^2$ and regular, its  {\em curvature  vector} is well
defined as $\underline{k}=\tau_x/|\sigma_{x}|$.\\
The arclength parameter of a curve $\sigma$ is given by
$$
s=s(x)=\int_0^x\vert\sigma_x(\xi)\vert\,d\xi\,.
$$
Notice that $\partial_s=\vert\sigma_x\vert^{-1}\partial_x$, 
then $\tau=\partial_s\sigma$ and $\underline{k}=\partial_s\tau$, 
hence, the curvature of $\sigma$ is given by $k=\langle\underline{k}\,\vert\,\nu\rangle$, 
as $\underline{k}=k\nu$.

\begin{dfnz}\label{network2}
Let $\Omega$ be a smooth, convex, open set in $\mathbb{R}^{2}$. A
{\em network with two triple junctions} $\mathbb{S}=\bigcup_{i=1}^{n}\sigma^{i}([0,1])$ in
$\Omega$ is a connected set in the plane described by a finite family
of $C^1$, regular curves $\sigma^{i}:[0,1]\to\overline{\Omega}$ 
such that
\begin{enumerate}
\item the relative interior of every curve $\sigma^{i}$, that is 
$\sigma^i(0,1)$, is embedded (hence, it has no self--intersections); 
a curve can self--intersect itself only possibly ``closing'' at its end--points;
\item two different curves can intersect each other only at their end--points;
\item there are exactly two points $O^{1},O^{2}\in\Omega$,
  the {\em 3--points} of the network, where three different curves intersect each other or where
  a curve intersects a different curve that has a self--intersection;
\item any curve can ``touch'' the boundary of $\Omega$ only at one of its
  end--points and if a curve of the network touches the boundary of $\Omega$ at a
  point $P$, no other curve can end in the same point $P$.
  We call {\em end--points} of the network, the vertices
  $P^{l}$ of $\mathbb{S}$ on the boundary of $\Omega$.
\end{enumerate}

We call the network {\em regular} if at the two 3--points $O^{1}$ and $O^{2}$ the sum of the exterior unit tangent vectors of the concurring curves is equal to zero ({\em Herring condition}).
  
We say that a network is of class $C^{k}$ or $C^{\infty}$ if all its curves 
are respectively of class $C^{k}$ or $C^{\infty}$.
\end{dfnz}

First we focus on the topological classification of these networks.

As just seen in Definition~\ref{network2}, 
we parametrize the curves composing the network with $\sigma^i:[0,1]\to\mathbb{R}^2$.
In each $3$--point either concur three different non--closed curves 
(for instance $O^1=\sigma^1(0)=\sigma^2(0)=\sigma^3(0)$)
or two curves, one of which closed (that is $O^1=\sigma^1(0)=\sigma^1(1)=\sigma^2(0)$).
If a curve is not closed (hence $\sigma^1(0)\neq \sigma^1(1)$), 
there are only two possibilities for its end--point not concurring in $O^1$:
either to be an end--point of the network on the boundary of $\Omega$, or to be in the other triple junction $O^2$.
If we repeat the above reasoning for every end--point, we obtain 
all cases shown in the following figure.
\begin{figure}[H]
\begin{center}
\begin{tikzpicture}[scale=1.48]
%disegno Theta
\draw[scale=0.45,shift={(8,-4.5)}] 
(-1.73,-1.8) 
to[out= 180,in=180, looseness=1] (-2.8,0) 
to[out= 60,in=150, looseness=1.5] (-1.5,1) 
(-2.8,0)
to[out=-60,in=180, looseness=0.9] (-1.25,-0.75)
(-1.5,1)
to[out= -30,in=90, looseness=0.9] (-1,0)
to[out= -90,in=60, looseness=0.9] (-1.25,-0.75)
to[out= -60,in=0, looseness=0.9](-1.73,-1.8);
\draw[color=black,scale=0.45,domain=-3.15: 3.15,
smooth,variable=\t,shift={(6.28,-4.5)},rotate=0]plot({2.*sin(\t r)},
{2.*cos(\t r)}) ; 
%disegno occhiali
\draw[scale=0.33,shift={(26.8,-5.5)}] 
(-2,0) 
to[out= 170,in=40, looseness=1] (-2.9,1.2) 
to[out= -140,in=90, looseness=1] (-3.2,0)
(-2,0)
to[out= -70,in=0, looseness=1] (-3,-0.9) 
to[out= -180,in=-90, looseness=1] (-3.2,0)
(-2,0) 
to[out= 50,in=180, looseness=1] (-1.3,0) 
to[out= 60,in=150, looseness=1.5] (-0.75,1) 
(-1.3,0)
to[out= -60,in=-120, looseness=0.9] (-0.5,-0.75)
(-0.75,1)
to[out= -30,in=90, looseness=0.9] (-0.25,0)
to[out= -90,in=60, looseness=0.9] (-0.5,-0.75);
\draw[color=black,scale=0.33,domain=-3.15: 3.15,
smooth,variable=\t,shift={(25,-5.5)},rotate=0]plot({2.*sin(\t r)},
{2.*cos(\t r)}) ; 
%disegno occhiali bis
\draw[scale=0.33,shift={(31.1,-7)}] 
(-3,0) 
to[out= 120,in=140, looseness=1] (-2.1,1.4) 
to[out= -40,in=90, looseness=1] (0,0)
(-3,0) 
to[out= -120,in=-180, looseness=1] (-1,-1.3) 
to[out= 0,in=-90, looseness=1] (0,0)
(-3,0) 
to[out= 0,in=-120, looseness=1] (-2.6,0.15) 
to[out= 60,in=180, looseness=1] (-2.3,0) 
to[out= 60,in=150, looseness=1.5] (-1.75,1) 
(-2.3,0)
to[out= -60,in=-120, looseness=0.9] (-1.5,-0.75)
(-1.75,1)
to[out= -30,in=90, looseness=0.9] (-1.25,0)
to[out= -90,in=60, looseness=0.9] (-1.5,-0.75);
\draw[color=black,scale=0.33,domain=-3.15: 3.15,
smooth,variable=\t,shift={(29.58,-7)},rotate=0]plot({2.*sin(\t r)},
{2.*cos(\t r)}) ; 
%disegno occhietto
\draw[scale=0.45, shift={(8,-9.72)}]  
(-3.73,0)
to[out= 50,in=180, looseness=1] (-2.8,0) 
to[out= 60,in=150, looseness=1.5] (-1.5,1) 
(-2.8,0)
to[out=-60,in=180, looseness=0.9] (-1.25,-0.75)
(-1.5,1)
to[out= -30,in=90, looseness=0.9] (-1,0)
to[out= -90,in=60, looseness=0.9] (-1.25,-0.75)
to[out= -60,in=150, looseness=0.9](-0.3,-1.3);
\draw[color=black,scale=0.45,domain=-3.15: 3.15,
smooth,variable=\t,shift={(6.28,-9.72)},rotate=0]plot({2.*sin(\t r)},
{2.*cos(\t r)}) ; 
%disegno alberello
\draw[scale=0.45, shift={(8,-14.94)}] 
 (-3.73,0) 
to[out= 50,in=180, looseness=1] (-2.3,0.7) 
to[out= 60,in=180, looseness=1.5] (-0.45,1.55) 
(-2.3,0.7)
to[out= -60,in=130, looseness=0.9] (-1,-0.3)
to[out= 10,in=100, looseness=0.9](0.1,-0.8)
(-1,-0.3)
to[out=-110,in=50, looseness=0.9](-2.7,-1.7);
\draw[color=black,scale=0.45,domain=-3.15: 3.15,
smooth,variable=\t,shift={(6.28,-14.94)},rotate=0]plot({2.*sin(\t r)},
{2.*cos(\t r)}) ; 
%disegno isola
\draw[scale=0.45, shift={(14.65,-9.72)}]  
(-2,0)
to[out= 170,in=40, looseness=1] (-2.9,1.2) 
to[out= -140,in=90, looseness=1] (-3.2,0)
(-2,0) 
to[out= -70,in=0, looseness=1] (-3,-0.9) 
to[out= -180,in=-90, looseness=1] (-3.2,0)
(-2,0) 
to[out= 50,in=180, looseness=1] (-1.3,0) 
to[out= 60,in=150, looseness=1.5] (-0.75,1) 
(-1.3,0)
to[out= -60,in=-120, looseness=0.9] (-0.5,-0.75)
(-0.75,1)
to[out= -30,in=90, looseness=0.9] (0,1)
(0,-1)
to[out= -90,in=60, looseness=0.9] (-0.5,-0.75);
\draw[color=black,scale=0.45,domain=-3.15: 3.15,
smooth,variable=\t,shift={(12.93,-9.72)},rotate=0]plot({2.*sin(\t r)},
{2.*cos(\t r)}) ; 
\draw
(0,0)--(0,-8.05)
(10.5,0)--(10.5,-8.05)
(7.5,0)--(7.5,-8.05)
(4.5,0)--(4.5,-8.05)
(1.5,0)--(1.5,-8.05)
(0,0)--(10.5,0)
(0,-1)--(10.5,-1)
(0,-3.35)--(10.5,-3.35)
(0,-5.7)--(10.5,-5.7)
(0,-8.05)--(10.5,-8.05);
\path[font=\Large]
(4.35,-3.11) node[left]{Theta}
(10.35,-3.16) node[left]{Eyeglasses}
(4.35,-5.46) node[left]{Lens}
(7.35,-5.46) node[left]{Island}
(4.35,-7.81) node[left]{Tree}
(0.75,-2.2) node[above,rotate=90]{$0$ end--points}
(1.25,-2.2) node[above,rotate=90]{on $\partial\Omega$}
(0.75,-4.55) node[above,rotate=90]{$2$ end--points}
(1.25,-4.55) node[above,rotate=90]{on $\partial\Omega$}
(0.75,-6.9) node[above,rotate=90]{$4$ end--points}
(1.25,-6.9) node[above,rotate=90]{on $\partial\Omega$}
(1.75,-0.5) node[right]{$0$ closed curves}
(4.75,-0.5)node[right]{$1$ closed curve}
(7.75,-0.5) node[right] {$2$ closed curves};
\end{tikzpicture}
\end{center}
\begin{caption}
{Networks with two triple junctions.\label{clas}}
\end{caption}
\end{figure}

\begin{dfnz}
Given a network $\mathbb{S}=\bigcup_{i=1}^n\sigma^i([0,1])$
we denote with $L^i$ the length of the $i$--th curve $\sigma^i$, with $L=L^1+\cdots+L^n$ being the global length of $\mathbb{S}$ and with $A^i$ the area enclosed in the $i$--th loop (if present). 
\end{dfnz}

When we say that a network has a loop $\ell$, we mean that there is a Jordan curve $\Gamma$ in $\mathbb{S}$ that encloses
an area $A$. 
In the case of networks with two triple junctions, 
there are only two cases (see Figure~\ref{clas}):
\begin{itemize}
\item the loop $\ell$ is composed by a single curve $\sigma:[0,1]\to\mathbb{R}^2$,
$\sigma(0)=\sigma(1)$ and at this junction we have an angle of $120$ degrees.
The length $L$ of $\ell$ coincides with the length of $\sigma$.
\item the loop $\ell$ is composed by two curves $\sigma^1,\sigma^2:[0,1]\to\mathbb{R}^2$,
that meet each other at their end--points and at both junctions
there is an angle of $120$ degrees.
The length $L$ of $\ell$ is the sum of the lengths of the two curves $\sigma^1$ and $\sigma^2$.
\end{itemize}

Given a network with two triple junctions composed by $n$ curves 
with $l$ end--points $P^1,P^2,\dots,P^l\in\overline{\Omega}$ (if present)
and two
$3$--points $O^1,O^2\in\Omega$, we will denote with
$\sigma^{pi}$ the curves of this network concurring at the
$3$--point $O^p$ (with $p=1,2$), with the index $i$ varying from one 
to three (this is clearly
redundant as some curves coincides, but useful for the notation). 

\begin{dfnz}\label{probdef}  
Given an initial, regular, $C^2$ network $\SS_0$, 
composed by  $n$ curves $\sigma^i:[0,1]\to\overline{\Omega}$, 
with two 3--points $O^1,O^2\in\Omega$ and $l$ end--points $P^1,P^2,\dots,P^l\in\partial\Omega$  (if present)
in a smooth convex, open set $\Omega\subset\mathbb{R}^{2}$, we say that a family of homeomorphic 
networks $\mathbb{S}_t$, described by the
family of time--dependent curves $\gamma^i(\cdot, t)$, is a solution
of the motion by curvature problem with fixed end--points in the time interval $[0,T)$
if the functions $\gamma^i:[0,1]\times[0,T)\to\overline{\Omega}$
are continuous, there holds $\gamma^i(x,0)=\sigma^i(x)$ for every $x\in[0,1]$ and $i\in\{1,2,\dots,n\}$ (initial data), they 
are at least $C^2$ in space and $C^1$ in time in $[0,1]\times(0,T)$ and satisfy
the following system of conditions for every $x\in[0,1]$, $t\in(0,T)$ , $i\in\{1,2,\dots,n\}$,
\begin{equation}\label{problema}
\begin{cases}
\begin{array}{lll}
\gamma_x^i(x,t)\not=0
\quad &&\text{ regularity}\\
\gamma^r(1,t)=P^r\quad&\text{with}\, 0\leq r \leq l\,
\quad &\text{ end--points condition}\\
\sum_{j=1}^3\tau^{pj}(O^p,t)=0\quad&\text{at every 3--point $O^p$}
\quad &\text{ angles of $120$ degrees}\\
\gamma^i_t=k^i\nu^i+\lambda^i\tau^i\quad&\text{for some continuous functions}\, \lambda^i
\quad &\text{ motion by curvature}
\end{array}
\end{cases}
\end{equation}
where we assumed conventionally (possibly reordering the family of
curves and ``inverting'' their parametrization) that the end--point
$P^r$ of the network is given by $\gamma^r(1,t)$.\\
Moreover, in the third equation we abused a little the notation, denoting with $\tau^{pj}(O^p,t)$
the respective unit normal vectors at $O^p$ of the three curves
$\gamma^{pj}(\cdot,t)$ in the family $\{\gamma^i(\cdot,t)\}$ concurring at the
3--point $O^p$.
\end{dfnz}

\begin{rem}
The equation that describes the motion by curvature
\begin{equation}\label{eqev}
\gamma^i_t(x,t)=k^i(x,t)\nu^i(x,t)+\lambda^i(x,t)\tau^i(x,t)
\end{equation}
differs from the classic way of defining the mean curvature flow 
\begin{equation}\label{mcf}
\gamma_t(x,t)=k(x,t)\nu(x,t)=\kappa(x,t)\,,
\end{equation}
for the presence of the tangential term.
In the case of a closed curve one can always pass from 
equation~\eqref{eqev} to equation~\eqref{mcf} by a (time--dependent) reparametrization
of the curve, this is not possible in our case of planar networks with junctions.
Actually, adding a tangential term is necessary to allow the triple junctions to move.
\end{rem}

\begin{rem}
We want to underline the fact that although our approach is parametric, the flow is geometric, that is invariant by rotation and translation (isometries of $\mathbb{R}^2$) and invariant by reparametrization of the curves.
Hence, a unique solution of the Problem~\ref{problema} cannot be expected. Thus, we introduce the definition of \emph{geometric uniqueness}.

\begin{dfnz}
We say that the curvature flow of an initial $C^2$ network $\SS_0=\bigcup_{i=1}^n\sigma^i([0,1])$ is {\em geometrically unique} (in some regularity class), if all the curvature flows (in such class) satisfying Definition~\ref{probdef} can be obtained each other by means of time--depending reparametrizations.\\
To be precise, this means that if $\SS_t$ and $\widetilde{\SS}_t$ are two curvature flows of $\SS_0$, described by some maps $\gamma^i$ and $\widetilde{\gamma}^i$, there exists a family of  maps $\varphi^i:[0,1]\times[0,T)\to[0,1]$ in $C^0([0,1]\times[0,T))\cap C^2([0,1]\times(0,T))$ such that $\varphi^i(0,t)=0$, $\varphi^i(1,t)=1$, $\varphi^i(x,0)=x$, $\varphi_x^i(x,t)\not=0$ and $\widetilde{\gamma}^i(x,t)={\gamma}^i(\varphi^i(x,t),t)$ for every $(x,t)\in[0,1]\times[0,T)$.
\end{dfnz}

It is then clear that if is geometric uniqueness holds, any curvature
flow gives a unique evolved network as a subset of $\R^2$, for every
time $t\in[0,T)$, which is still the same set also if we change the
parametrization of the initial network by the previous discussion.
\end{rem}

\section{Preliminary results}\label{preliminary}

\subsection{Short time existence}
\begin{teo}\label{c2shorttime}
For any initial $C^2$ regular network
$\SS_0=\bigcup_{i=1}^n\sigma^i([0,1])$ there exists a solution $\gamma^i$
of Problem~\eqref{problema} in a maximal time interval $[0,T)$.\\
Such curvature flow $\SS_t=\bigcup_{i=1}^n\gamma^i([0,1],t)$ is a smooth
flow for every time $t>0$, moreover, the unit tangents $\tau^i$ are
continuous in $[0,1]\times[0,T)$, the functions $k(\cdot,t)$ 
converge weakly in $L^2(ds)$ to $k(\cdot,0)$, as $t\to 0$, and the function 
$\int_{\SS_t}k^2\,ds$ is continuous on $[0,T)$.
\end{teo}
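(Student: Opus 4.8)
The plan is to remove the degeneracy of the system~\eqref{eqev}, which originates from the geometric (reparametrization) invariance of the flow — purely tangential motions lie in the kernel of the principal symbol — by fixing a gauge for the tangential velocity. Concretely, I would first solve the \emph{special flow}
$$
\gamma^i_t=\frac{\gamma^i_{xx}}{|\gamma^i_x|^2}\,,
$$
whose normal component is exactly $k^i\nu^i$, while its tangential component is $\lambda^i\tau^i$ with the explicit, continuous coefficient $\lambda^i=\langle\gamma^i_x,\gamma^i_{xx}\rangle/|\gamma^i_x|^3$; hence any solution of the special flow is automatically a solution of Problem~\eqref{problema} with continuous $\lambda^i$. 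On each curve $[0,1]$ this is a uniformly parabolic quasilinear system (the ellipticity constant is controlled by the regularity $\gamma^i_x\neq 0$ of the initial datum), the $n$ curves being coupled only through the boundary operators. At each $3$--point $O^p$ these operators are the \emph{concurrency} conditions $\gamma^{p1}(0,t)=\gamma^{p2}(0,t)=\gamma^{p3}(0,t)$ (four scalar, Dirichlet--type matching relations) together with the Herring condition $\sum_{j}\tau^{pj}(O^p,t)=0$ (two scalar, Neumann--type relations), which exactly matches the six scalar conditions needed for a second--order system on the three incoming arcs; at the end--points one has the Dirichlet condition $\gamma^r(1,t)=P^r$.

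Next I would treat this boundary value problem by the linearization method of Solonnikov for parabolic systems. Freezing the coefficients and linearizing the boundary operators at the initial datum, one obtains a constant--coefficient linear parabolic system whose solvability in parabolic H\"older (or $L^2$--Sobolev) classes is governed by the Lopatinski--Shapiro \emph{complementing condition}. \textbf{Verifying this complementing condition for the coupled concurrency/angle boundary operators is the main obstacle}, and is precisely the technical heart of the argument carried out by Bronsard and Reitich for the triod~\cite{bronsard} and adapted in~\cite{mannovtor,mannovplusch}; the two triple junctions do not interact at the linear level, so checking it at a single generic $3$--point suffices and then applies verbatim here. Granting it, the linear theory provides well--posedness and a priori estimates in the parabolic class $C^{2+\alpha,1+\alpha/2}$, and a contraction--mapping (or inverse function theorem) argument upgrades this to short--time existence and uniqueness of the special flow. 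For this one must impose the \emph{compatibility conditions} at the space--time corners $(O^p,0)$ and $(P^r,0)$: the initial datum and the boundary operators must match to the appropriate order so that no corner singularity is produced; for $C^{2}$ data only the zeroth order compatibility (the datum is already a regular network meeting the Herring and Dirichlet conditions) is available, which is why full regularity up to $t=0$ cannot be expected.

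Finally I would recover the statement for merely $C^{2}$ data and extract the qualitative conclusions. Since higher compatibility fails for $C^2$ data, I would first run the above scheme for smooth, fully compatible networks, and then approximate a general $C^2$ regular $\SS_0$ by such networks, deriving uniform short--time estimates — in particular a uniform local bound on $\int_{\SS_t}k^2\,ds$ obtained from the energy inequality for this quantity along the smooth flows — and passing to the limit. Interior and boundary parabolic regularity (Schauder bootstrap) then yield that the limit flow is $C^\infty$ for every $t>0$. The continuity of the unit tangents $\tau^i$ on $[0,1]\times[0,T)$ follows because the construction controls $\gamma^i$ uniformly in $C^{1,\alpha}$ in space up to $t=0$, so $\tau^i=\gamma^i_x/|\gamma^i_x|$ is continuous including at $t=0$. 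For the curvature, the uniform $L^2$ bound near $t=0$ gives, along any sequence $t\to 0$, a weak $L^2(ds)$ limit; since $\tau^i(\cdot,t)\to\tau^i(\cdot,0)$ in $C^0$ forces $k(\cdot,t)=\partial_s\tau(\cdot,t)\to\partial_s\tau(\cdot,0)=k(\cdot,0)$ in the sense of distributions, the weak limit must be $k(\cdot,0)$, proving $k(\cdot,t)\rightharpoonup k(\cdot,0)$. Continuity of $t\mapsto\int_{\SS_t}k^2\,ds$ then follows by combining weak lower semicontinuity ($\liminf_{t\to0}\int_{\SS_t}k^2\,ds\ge\int_{\SS_0}k^2\,ds$) with the matching upper bound $\limsup_{t\to0}\int_{\SS_t}k^2\,ds\le\int_{\SS_0}k^2\,ds$ coming from the energy inequality, which also upgrades the convergence to strong $L^2$.
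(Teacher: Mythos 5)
Your proposal is correct and takes essentially the same approach as the paper: the paper's own ``proof'' is just the citation to \cite[Theorem 6.8]{mannovplusch}, whose argument is precisely the scheme you outline --- reducing to the special flow $\gamma^i_t=\gamma^i_{xx}/|\gamma^i_x|^2$, applying Solonnikov's linear parabolic theory with the Bronsard--Reitich verification of the Lopatinski--Shapiro complementing condition at the junctions (your count of $4+2=6$ scalar boundary relations per $3$--point is the right one), and then treating merely $C^2$ data by approximation with smooth compatible networks and uniform estimates. Your derivation of the qualitative conclusions (continuity of $\tau^i$ up to $t=0$, weak $L^2(ds)$ convergence $k(\cdot,t)\rightharpoonup k(\cdot,0)$ via the distributional identification $k=\partial_s\tau$, and continuity of $\int_{\SS_t}k^2\,ds$ by lower semicontinuity matched against the energy upper bound) likewise mirrors the cited proof, so there is nothing to flag.
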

\begin{proof}
See \cite[Theorem 6.8]{mannovplusch}.
\end{proof}

\begin{rem}
Because of the lack of maximum principle, due to the presence of the two triple junctions, the geometric uniqueness of the solution $\gamma^i$
in the natural class of flows $C^2$ in space and $C^1$ in time is an open problem.\\
If one considers the higher regularity class of flows $C^{2+2\alpha}$ in space and $C^{1+\alpha}$ in time, with $\alpha\in(0,1/2)$, the geometric uniqueness of the solution $\gamma^i$ is established under the so called 
"compatibility conditions" of order $2$ (see~\cite{bronsard} or~\cite[Section~4]{mannovplusch}, for details) holding for the initial regular network.
\end{rem}

\begin{rem} 
It should be noticed that if the initial curves $\sigma^i$
    are $C^\infty$, the flow $\SS_t$ is smooth till $t=0$ far from
    the 3--points, that is, in any closed rectangle included in
    $(0,1)\times[0,T)$ we can locally reparametrize
    the curves $\gamma^i$ to get a smooth flow up to $t=0$.
    This follows from standard local estimates for the motion by curvature (see~\cite{eckhui2}).
\end{rem}

\begin{rem}
In~\cite{Ilnevsch} a short time existence theorem that does not even require the $120$ degrees angle condition is proved. Indeed, in~\cite[Theorem 1.1]{Ilnevsch} the initial network $\mathbb{S}_0$ is just a connected, planar $C^2$ network with bounded curvature, not necessarily regular, but only with mutually distinct unit tangent vectors at the multi--points.
\end{rem}

The previous theorem says that a flow {\em starts} and it is regular for some time, the next basic theorem describes what happens at the maximal time of smooth existence.
\begin{teo}\label{curvexplod-general} 
If $T<+\infty$ is the maximal time interval of existence of the 
curvature flow $\SS_t$ of an initial regular $C^2$ network given by
the previous theorem, then 
\begin{enumerate} 
\item either the inferior limit of the length of at least one curve of   $\SS_t$ goes to zero when $t\to T$, 
\item or $\limup_{t\to T}\int_{\SS_t}k^2\,ds=+\infty$, hence, the curvature is not bounded as $t\to T$.
\end{enumerate} 
Moreover, if the lengths of the $n$ curves are uniformly positively bounded from below, 
then this superior limit is actually a limit and  
there exists a positive constant $C$ such that 
$$
\int_{{\SS_t}} k^2\,ds \geq \frac{C}{\sqrt{T-t}}\,\,\text{ and }\,\,
\max_{\SS_t}k^2\geq\frac{C}{\sqrt{T-t}}
$$
for every $t\in[0, T)$.
\end{teo}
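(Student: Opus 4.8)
The plan is to prove the stated dichotomy by contradiction, via a priori estimates strong enough to extend the flow past $T$ whenever \emph{both} alternatives fail. Accordingly, I would assume that $T<+\infty$, that the inferior limits of the lengths $L^i(t)$ are all strictly positive, and that $\limup_{t\to T}\int_{\SS_t}k^2\,ds<+\infty$, and then derive a contradiction with the maximality of $T$. The central object is the quantity $f(t)=\int_{\SS_t}k^2\,ds$, already known to be continuous on $[0,T)$ by Theorem~\ref{c2shorttime}. Using the evolution equations induced by $\gamma^i_t=k^i\nu^i+\lambda^i\tau^i$, namely $\partial_t k=\partial_s^2 k+k^3+\lambda\,\partial_s k$ and $\partial_t(ds)=(\partial_s\lambda-k^2)\,ds$, an integration by parts gives
\[
\frac{d}{dt}\int_{\SS_t}k^2\,ds=-2\int_{\SS_t}(\partial_s k)^2\,ds+\int_{\SS_t}k^4\,ds+\text{(boundary terms)},
\]
where the boundary terms are the values of $2k\,\partial_s k+\lambda k^2$ at the end--points and at the two $3$--points.

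The first, and I expect the hardest, step is the control of these boundary terms. At the fixed end--points on $\partial\Omega$ the condition $\partial_t\gamma^r(1,t)=0$ forces $k=\lambda=0$ there, so those contributions vanish; at each $3$--point the delicate point is that the three concurring curves must be treated together. Here I would differentiate the Herring condition $\sum_j\tau^{pj}=0$ in time, together with the continuity of the velocity at the junction, to obtain the algebraic relations among the $k^{pj}$, the $\lambda^{pj}$ and their first derivatives (in particular $\sum_j k^{pj}=0$) that make the summed boundary contributions at $O^p$ collapse into terms controllable by the interior quantities. Once this is done, the Gagliardo--Nirenberg interpolation inequalities on each curve, whose constants are \emph{uniform} precisely because the lengths stay bounded below, let me estimate both $\int k^4\,ds$ and the residual boundary terms by $\e\int(\partial_s k)^2\,ds+C_\e\,(\int k^2\,ds)^3+C_\e$. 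Absorbing the gradient term into $-2\int(\partial_s k)^2\,ds$ yields the differential inequality
\[
\frac{d}{dt}\int_{\SS_t}k^2\,ds\;\le\;C\Big(\int_{\SS_t}k^2\,ds\Big)^3+C .
\]

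From here the dichotomy follows: if in addition $f$ stays bounded, the same scheme applied to the higher quantities $\int_{\SS_t}(\partial_s^m k)^2\,ds$ gives, by induction on $m$ and the same interpolation/boundary analysis, uniform bounds on all of them on $[0,T)$. Sobolev embedding then produces uniform $C^\infty$ bounds on the curvature and its derivatives; combined with the lengths bounded below (so the angles and the regularity survive) the curves $\gamma^i(\cdot,t)$ converge, as $t\to T$, to a smooth regular limit network $\SS_T$. Short--time existence (Theorem~\ref{c2shorttime}) restarted from $\SS_T$ would extend the flow beyond $T$, contradicting maximality. Hence at least one length must vanish in the inferior limit, or $f$ must be unbounded, which is exactly the two alternatives.

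Finally, the quantitative part follows from the same inequality. When the lengths are uniformly bounded below, alternative~(1) is excluded, so $\limup_{t\to T}f=+\infty$; thus there is a sequence $t_j\to T$ with $f(t_j)\to+\infty$. Setting $g=f^{-2}$, the inequality $f'\le Cf^3$ gives $g'\ge -2C$, whence $g(t)\ge g(t_j)-2C(t_j-t)$ and, letting $j\to\infty$, $g(t)\le 2C(T-t)$ for every $t$. This reads $\int_{\SS_t}k^2\,ds\ge C'/\sqrt{T-t}$, which in particular shows $f\to+\infty$, so the superior limit is genuinely a limit. For the pointwise bound I would use that the total length is non--increasing (indeed $\frac{d}{dt}L=-\int_{\SS_t}k^2\,ds\le 0$ after the junction terms cancel by the Herring condition), so $L(t)\le L(\SS_0)$ and
\[
\max_{\SS_t}k^2\;\ge\;\frac{1}{L(t)}\int_{\SS_t}k^2\,ds\;\ge\;\frac{1}{L(\SS_0)}\cdot\frac{C'}{\sqrt{T-t}},
\]
which yields the claimed lower bound for $\max_{\SS_t}k^2$ as well. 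The technical crux throughout remains the junction boundary terms; everything else is interpolation and bootstrapping.
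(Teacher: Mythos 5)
Your proposal is correct and takes essentially the same route as the paper, whose proof of this theorem consists of citing \cite[Theorem~3.18]{mannovtor} and \cite[Theorem~6.7, Corollary~6.10]{mannovplusch}: those proofs rest on exactly your scheme --- the junction relations (in particular $\sum_j k^{pj}=0$) obtained by differentiating the Herring condition and using the common velocity at the $3$--points, Gagliardo--Nirenberg interpolation with constants uniform thanks to the lower length bound, the resulting inequality $\frac{d}{dt}\int_{\SS_t}k^2\,ds\le C\bigl(1+\int_{\SS_t}k^2\,ds\bigr)^3$ (precisely inequality~(10.4) of \cite[Lemma~10.23]{mannovplusch}, invoked later in this paper), bootstrapping on $\int(\partial_s^m k)^2\,ds$ to extend the flow past $T$ and contradict maximality, and ODE comparison for the blow--up rate. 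The only blemish is a sign slip in the comparison step: from $g'\ge-2C$ you should obtain $g(t)\le g(t_j)+2C(t_j-t)$ rather than $g(t)\ge g(t_j)-2C(t_j-t)$, though the conclusion $g(t)\le 2C(T-t)$, and hence $\int_{\SS_t}k^2\,ds\ge C'/\sqrt{T-t}$, is the correct consequence.
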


\begin{proof}
See~\cite[Theorem 3.18]{mannovtor} for the case of a single triple junction  
or~\cite[Theorem 6.7, Corollary 6.10]{mannovplusch} for the general case.
\end{proof}

\subsection{Geometric properties of the flow}
The next proposition describes the evolution of the lengths of the curves of the network and of the areas enclosed in the loops of the network.

\begin{prop}\label{ppp1}
Consider a network with two triple junctions $\mathbb{S}_t=\bigcup_{i=1}^n\gamma^i(x,t)$,
then the evolution equation of its total length is given by
\begin{equation}\label{evolL}
\frac{dL(t)}{dt}=-\int_{\mathbb{S}_t}k^2\,ds\,,
\end{equation}
while the length of a single curve $\gamma^i$ behaves as  
\begin{equation}
\frac{dL^i(t)}{dt}=\lambda^i(1,t)-\lambda^i(0,t)-\int_{\gamma^i(\cdot,t)}(k^i)^2\,ds\,.
\end{equation}
Moreover, the evolution of the area of the region enclosed in a loop is
\begin{equation}\label{evolarea}
\frac{dA^i(t)}{dt}=-2\pi+m\left(\frac{\pi}{3}\right), 
\end{equation}
where $m\in\{1,2\}$ is the number of curves composing the loop.
\end{prop}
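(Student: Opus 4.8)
The plan is to treat the three formulas in turn, all by differentiating under the integral sign (legitimate because, by Theorem~\ref{c2shorttime}, the flow is smooth for $t>0$) and then integrating by parts to split off a bulk term and a boundary term. For the single curve I would write $L^i(t)=\int_0^1|\gamma^i_x(x,t)|\,dx$ and use $\partial_t|\gamma^i_x|=\langle\tau^i,\partial_x\gamma^i_t\rangle$, so that $\frac{dL^i}{dt}=\int_0^1\langle\tau^i,\partial_x\gamma^i_t\rangle\,dx$. Integrating by parts in $x$ yields the boundary term $[\langle\tau^i,\gamma^i_t\rangle]_0^1$ and the bulk term $-\int_0^1\langle\partial_x\tau^i,\gamma^i_t\rangle\,dx$. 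Since $\partial_x\tau^i=|\gamma^i_x|\,\partial_s\tau^i=|\gamma^i_x|\,k^i\nu^i$ and $\gamma^i_t=k^i\nu^i+\lambda^i\tau^i$, the bulk integrand is $(k^i)^2|\gamma^i_x|$, i.e. the bulk term equals $-\int_{\gamma^i}(k^i)^2\,ds$; the boundary integrand is $\langle\tau^i,\gamma^i_t\rangle=\lambda^i$, evaluated at the two endpoints, giving $\lambda^i(1,t)-\lambda^i(0,t)$. This is the second formula.

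Summing over $i$ gives $\frac{dL}{dt}=\sum_i\big(\lambda^i(1,t)-\lambda^i(0,t)\big)-\int_{\mathbb{S}_t}k^2\,ds$, so it remains to see that the boundary terms cancel. At a fixed end--point on $\partial\Omega$ the curve does not move, $\gamma^i_t=0$, hence $\lambda^i=0$ there. At a $3$--point $O^p$ the three concurring curves share one junction velocity $V^p=\gamma^{pj}_t(O^p,t)$, and I would check that each curve contributes exactly $-\langle V^p,\tau^{pj}_{\mathrm{ext}}\rangle$ to the sum, where $\tau^{pj}_{\mathrm{ext}}$ is its exterior unit tangent at $O^p$ (the sign coming out the same whether $O^p$ is the $x=0$ or the $x=1$ endpoint, once orientation is taken into account). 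Summing over the three curves at $O^p$ gives $-\langle V^p,\sum_{j=1}^3\tau^{pj}_{\mathrm{ext}}\rangle=0$ by the Herring condition, which establishes the first formula.

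For the area I would start from the signed--area representation $A^i=-\tfrac12\oint_{\ell}\langle\gamma,R\gamma_x\rangle\,dx$ of the loop $\ell$, oriented so that $A^i>0$ (one checks this gives $\pi r^2$ on a circle). Differentiating and integrating by parts as before, the two resulting interior integrals coincide, using the antisymmetry $R^{\top}=-R$, and combine to $-\oint_\ell\langle\gamma_t,R\gamma_x\rangle\,dx=-\oint_\ell k\,ds$ (since $R\gamma_x=|\gamma_x|\nu$ and $\langle\gamma_t,\nu\rangle=k$), while the boundary contributions, collected at the junction(s), cancel in pairs because both position and velocity are continuous there (the junction moves as a single point). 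Thus $\frac{dA^i}{dt}=-\oint_\ell k\,ds$, and it remains to evaluate $\oint_\ell k\,ds$. Writing $k=\partial_s\theta$ for the turning angle $\theta$ of $\tau$, this integral is the total smooth turning along $\ell$; by the theorem of turning tangents the smooth turning plus the exterior angles at the corners equals $2\pi$. Each of the $m$ corners is a $3$--point where the loop has interior angle $120^\circ$, hence exterior angle $\pi/3$, so $\oint_\ell k\,ds=2\pi-m\,\pi/3$ and therefore $\frac{dA^i}{dt}=-2\pi+m(\pi/3)$.

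Beyond the routine computations, the genuinely delicate points are two. First, the orientation bookkeeping that makes all the junction boundary terms cancel, both in the length and in the area computation; here the essential structural inputs are the common junction velocity and the Herring condition. Second, the identification of the corner contribution, which requires knowing that at each $3$--point the loop region sits on the $120^\circ$ side, so that the exterior angle is $+\pi/3$ rather than the reflex value $-\pi/3$; this is where the embeddedness of the network and the specific topological types classified in Figure~\ref{clas} enter. I expect this second point to be the main obstacle to write down cleanly, since it is the only step that is not a direct consequence of the evolution equation together with integration by parts.
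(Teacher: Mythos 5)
Your proposal is correct and follows essentially the same route as the paper, which proves the length formulas by exactly this first--variation computation (delegated to \cite[Proposition~5.1]{mannovplusch}, where the junction terms cancel via the Herring condition and $\lambda=0$ at fixed end--points) and obtains \eqref{evolarea} by a direct application of Gauss--Bonnet, i.e.\ your identity $\frac{dA^i}{dt}=-\oint_\ell k\,ds$ combined with the turning--tangents count $\oint_\ell k\,ds=2\pi-m\,\pi/3$. Your flagged worry about the exterior angles is already settled by the paper's setup: the description of loops after Figure~\ref{clas} stipulates that the loop has interior angle $120$ degrees at each of its $m$ junctions, so the exterior angle is $+\pi/3$ by definition of the admissible loops.
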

\begin{proof}
The evolution of the areas is obtained by a direct application of Gauss--Bonnet Theorem. 
For the proof of the evolution of the lengths see~\cite[Proposition~5.1]{mannovplusch}. 
\end{proof}

\subsection{Huisken's monotonicity formula and the rescaling procedure}\label{rescaling}
Now we introduce a  modified version of Huisken's monotonicity formula (see~\cite{huisk3})
and a dynamical rescaling procedure suitable to our situation.

Let $F : \SS\times [0,T)\to\mathbb{R}^2$ be a curvature flow of a
regular network in its maximal time interval of existence. With a little abuse of notation, we will write $\tau(P^r,t)$ and
$\lambda(P^r,t)$ respectively for the unit tangent vector and the
tangential velocity at the end--point $P^r$ of the curve of the
network, for any $r\in\{1,2,\dots,l\}$.

Let $x_0\in\R^2, t_0 \in (0,\infty)$ and
$\rho_{x_0,t_0}:\R^2\times[0,t_0)$ be the one--dimensional {\em
backward   heat kernel} in $\R^2$ relative to $(x_0,t_0)$, that is, 
$$
\rho_{x_0,t_0}(x,t)=\frac{e^{-\frac{\vert x-x_0\vert^2}{4(t_0-t)}}}{\sqrt{4\pi(t_0-t)}}\,.
$$
We will often write $\rho_{x_0}(x,t)$ to denote $\rho_{x_0,T}(x,t)$
(or $\rho_{x_0}$ to denote $\rho_{x_0,T}$), when $T$ is 
the maximal (singular) time of existence of a smooth curvature flow.

\begin{dfnz}[Gaussian densities]\label{Gaussiandensities}
For every $x_0\in\R^2, t_0 \in (0,\infty)$ we define the {\em Gaussian
  density function} $\Theta_{x_0,t_0}:[0,\min\{t_0,T\})\to\R$ as 
$$
\Theta_{x_0,t_0}(t)=\int_{\SS_t}\rho_{x_0,t_0}(x,t)\,ds
$$
and provided $t_0\leq T$, the {\em limit density function} $\widehat{\Theta}:\R^2\times (0,\infty)\to\R$ as
\[
\widehat{\Theta}(x_0,t_0)=\lim_{t\to t_0}\Theta_{x_0,t_0}(t)\,.
\]
Moreover, we will often write $\Theta_{x_0}(t)$ to denote $\Theta(x_0,T)$ and $\widehat{\Theta}(x_0)$ for $\widehat{\Theta}(x_0,T)$.
\end{dfnz}

The limit $\widehat{\Theta}$ exists and it is finite. 
Moreover, the map $\widehat{\Theta}:\mathbb{R}^2\to\mathbb{R}$ is upper semicontinuous (see~\cite[Proposition~2.12]{MMN13}).

Next, we introduce the rescaling procedure of Huisken in~\cite{huisk3} at the maximal time $T$.\\ 
Fixed $x_0\in\R^2$, 
let $\widetilde{F}_{x_0}:\SS\times [-1/2\log{T},+\infty)\to\R^2$ 
be the map 
$$ \widetilde{F}_{x_0}(p,\tt)
=\frac{F(p,t)-x_0}{\sqrt{2(T-t)}}\qquad \tt(t)
=-\frac{1}{2}\log{(T-t)} $$ 
then, the rescaled networks are given by  
$$ \widetilde{\SS}_{x_0,\tt}=\frac{\SS_t-x_0}{\sqrt{2(T-t)}} $$ 
and they evolve according to the equation  
$$ \frac{\partial\,}{\partial   \tt}\widetilde{F}_{x_0}(p,\tt)
=\widetilde{\underline{v}}(p,\tt)+\widetilde{F}_{x_0}(p,\tt) $$
where  
$$ \widetilde{\underline{v}}(p,\tt)=\sqrt{2(T-t(\tt))}\cdot\underline{v}(p,t(\tt))=
 \widetilde{\underline{k}}+\widetilde{\underline{\lambda}}= 
\widetilde{k}\nu+\widetilde{\lambda}\tau\qquad 
\text{ and }\qquad t(\tt)=T-e^{-2\tt}\,. 
$$ 
Notice that we did not put the sign ``$\;\;\widetilde{}\;\;$'' over the unit tangent and normal, 
since they remain the same after the rescaling.\\ 
We will write $\widetilde{O}^p(\tt)=\widetilde{F}_{x_0}(O^p,\tt)$  
for the 3--points of the rescaled network $\widetilde{\SS}_{x_0,\tt}$ 
and $\widetilde{P}^r(\tt)=\widetilde{F}_{x_0}(P^r,\tt)$ for the end--points, 
when there is no ambiguity on the point $x_0$.\\ 
The rescaled curvature evolves according to the following equation, 
\begin{equation*}\label{evolriscforf} 
{\partial_\tt} \widetilde{k}= \widetilde{k}_{\ss\ss}+\widetilde{k}_\ss\widetilde{\lambda}
+ \widetilde{k}^3 -\widetilde{k} 
\end{equation*} 
which can be obtained by means of the commutation law 
\begin{equation*}\label{commutforf} 
{\partial_\tt}{\partial_\ss}={\partial_\ss}{\partial_\tt} 
+ (\widetilde{k}^2 -\widetilde{\lambda}_\ss-1){\partial_\ss}\,, 
\end{equation*} 
where we denoted with $\ss$ the arclength parameter for $\widetilde{\SS}_{x_0,\tt}$.

By a straightforward computation (see~\cite{huisk3})
we have the following rescaled version of the monotonicity formula.

\begin{prop}[Rescaled monotonicity formula] 
Let $x_0\in\R^2$ and set   
$$ \widetilde{\rho}(x)=e^{-\frac{\vert x\vert^2}{2}} $$ 
For every $\tt\in[-1/2\log{T},+\infty)$ the following identity holds 
\begin{equation*} 
\frac{d\,}{d\tt}\int_{\widetilde{\SS}_{x_0,\tt}} \widetilde{\rho}(x)\,d\ss= 
-\int_{\widetilde{\SS}_{x_0,\tt}}\vert \,\widetilde{\underline{k}}+x^\perp\vert^2\widetilde{\rho}(x)\,d\ss
+\sum_{r=1}^l\Bigl[\Bigl\langle\,{\widetilde{P}^r(\tt)} \,\Bigl\vert\,
{\tau}(P^r,t(\tt))\Bigr\rangle-\widetilde{\lambda}(P^r,\tt)\Bigl]\,\widetilde{\rho}(\widetilde{P}^r(\tt))\label{reseqmonfor} 
\end{equation*} 
where $\widetilde{P}^r(\tt)=\frac{P^r-x_0}{\sqrt{2(T-t(\tt))}}$.\\  
Integrating between $\tt_1$ and $\tt_2$ with  $-1/2\log{T}\leq \tt_1\leq \tt_2<+\infty$ we get 
\begin{align} 
\int_{\tt_1}^{\tt_2}\int_{\widetilde{\SS}_{x_0,\tt}}\vert \,\widetilde{\underline{k}}
+x^\perp\vert^2\widetilde{\rho}(x)\,d\ss\,d\tt= &\, 
\int_{\widetilde{\SS}_{x_0,\tt_1}}\widetilde{\rho}(x)\,d\ss 
-\int_{\widetilde{\SS}_{x_0,\tt_2}}\widetilde{\rho}(x)\,d\ss\label{reseqmonfor-int}\\ &\,
+\sum_{r=1}^l\int_{\tt_1}^{\tt_2}\Bigl[\Bigl\langle\,\widetilde{P}^r(\tt)\,
\Bigl\vert\,{\tau}(P^r,t(\tt))\Bigr\rangle-\widetilde{\lambda}(P^r,\tt)\Bigl]\,\widetilde{\rho}(\widetilde{P}^r(\tt)\,d\tt\,.\nonumber 
\end{align} 
\end{prop}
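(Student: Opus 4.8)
The plan is to derive the rescaled monotonicity formula as a direct consequence of the evolution of the Gaussian integrand under the rescaled flow, mirroring Huisken's original computation but carefully tracking the boundary contributions from the fixed end--points. First I would compute the time derivative under the rescaled flow of the weighted length element $\widetilde{\rho}(x)\,d\ss$. There are two sources of variation: the motion of the curves (encoded in $\partial_\tt\widetilde{F}_{x_0}=\widetilde{\underline{v}}+\widetilde{F}_{x_0}$) acting on the Gaussian weight $\widetilde{\rho}(x)=e^{-|x|^2/2}$, and the stretching of the arclength element $d\ss$. For the weight one uses $\frac{d}{d\tt}\widetilde{\rho}=-\langle x\,|\,\partial_\tt\widetilde{F}_{x_0}\rangle\,\widetilde{\rho}$, and for the length element the standard formula $\partial_\tt(d\ss)=(\langle\partial_\ss(\partial_\tt\widetilde{F}_{x_0})\,|\,\tau\rangle)\,d\ss$, which after substituting the velocity splits into a tangential-divergence piece and a normal piece.

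Next I would combine these two contributions and perform the key integration by parts that converts the raw expression into the perfect square $-|\widetilde{\underline{k}}+x^\perp|^2$. Concretely, writing $\widetilde{\underline{v}}+\widetilde{F}_{x_0}=\widetilde{\underline{k}}+\widetilde{\underline{\lambda}}+x$, one isolates the normal component, uses $\widetilde{\underline{k}}=\partial_\ss\tau$ and integrates $\int\langle x^\perp\,|\,\widetilde{\underline{k}}\rangle\widetilde{\rho}\,d\ss$ by parts along each curve. The tangential terms $\widetilde{\underline{\lambda}}=\widetilde{\lambda}\tau$ together with the tangential part of $x$ assemble into an exact $\partial_\ss$-derivative of a quantity of the form $\langle x\,|\,\tau\rangle\widetilde{\rho}$-type expressions, whose integral over $[0,1]$ reduces to boundary evaluations. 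One then invokes that at the two $3$--points the Herring condition $\sum_{j}\tau^{pj}=0$ holds, so that the three boundary terms meeting at each $O^p$ cancel (the common factor $\widetilde{\rho}(\widetilde{O}^p)$ times the sum of exterior tangents vanishes); this is precisely why no junction contribution survives and only the end--point terms $\sum_{r=1}^l[\langle\widetilde{P}^r\,|\,\tau\rangle-\widetilde{\lambda}(P^r)]\widetilde{\rho}(\widetilde{P}^r)$ remain. Care must be taken with orientation conventions so that ``exterior tangent'' signs at the junctions are consistent across the three concurring curves.

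I anticipate the main obstacle to be the bookkeeping of boundary terms: one must account for a contribution at each of the $2n$ curve end--points (two per curve), then group them according to whether they sit at a $3$--point or at a fixed end--point $P^r$ on $\partial\Omega$. At the $3$--points the Herring condition forces cancellation, but verifying this requires matching the tangential velocities $\widetilde{\lambda}^{pj}$ across the three curves; the consistency of these tangential terms at a junction (and the fact that $x=\widetilde{O}^p$ is common to all three) is what makes the $\langle x\,|\,\tau\rangle$ pieces collapse against $\sum_j\tau^{pj}=0$. Once the junction cancellation is established, the passage to the integrated form \eqref{reseqmonfor-int} is purely the fundamental theorem of calculus in $\tt$, integrating the differential identity between $\tt_1$ and $\tt_2$ and moving the manifestly nonpositive bulk term to the left-hand side. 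I would assume the rescaled evolution equations for $\widetilde{F}_{x_0}$, $\widetilde{k}$, and the commutator recorded just above the statement, so the only genuinely new computation is the variation of the Gaussian-weighted length and the identification of the square.
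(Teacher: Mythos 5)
Your proposal is correct and is essentially the computation the paper itself relies on (the paper gives no independent proof, deferring to Huisken's ``straightforward computation'' in~\cite{huisk3} and its network adaptations in~\cite{MMN13,mannovtor,mannovplusch}): first variation of the Gaussian--weighted length under the rescaled flow, completion of the square with the leftover being exactly the total derivative $\partial_\ss\bigl[\bigl(\widetilde{\lambda}+\langle x\,\vert\,\tau\rangle\bigr)\widetilde{\rho}\bigr]$, cancellation of the junction contributions via the Herring condition combined with the single--valuedness of the junction velocity $v$ (so that $\sum_j\widetilde{\lambda}^{pj}_{\mathrm{ext}}=\langle v\,\vert\,\sum_j\tau^{pj}_{\mathrm{ext}}\rangle=0$, the point you rightly flag as the delicate bookkeeping), with only end--point terms surviving and the integrated form following from the fundamental theorem of calculus in $\tt$. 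One sign caution: with the paper's convention $P^r=\gamma^r(1,t)$ the raw boundary evaluation is $\bigl(\widetilde{\lambda}(P^r,\tt)+\langle\widetilde{P}^r(\tt)\,\vert\,\tau(P^r,t(\tt))\rangle\bigr)\widetilde{\rho}(\widetilde{P}^r(\tt))$, i.e.\ with $+\widetilde{\lambda}$ rather than $-\widetilde{\lambda}$; this is immaterial for the statement because the fixed end--point condition forces $k(P^r,t)=\lambda(P^r,t)=0$, but the $\widetilde{\lambda}$--sign convention must be rechecked if the identity is later invoked with moving (``artificial'') boundary points as in Proposition~\ref{cross}.
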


\begin{lemma}\label{rescstimadib} 
For every $r\in\{1,2,\dots,l\}$ and $x_0\in\R^2$,
the following estimate holds  
\begin{equation*} 
\left\vert\int_{\tt}^{+\infty}\Bigl[\Bigl\langle\,\widetilde{P}^r(\xi)\,
\Bigl\vert\,{\tau}(P^r,t(\xi))\Bigr\rangle-\widetilde{\lambda}(P^r,\xi)\Bigl]\,d\xi\,\right\vert\leq C\,,
\end{equation*}
where $C$ is a constant depending only on the fixed end--points $P^r$.\\
As a consequence, for every point $x_0\in\R^2$, we have
\begin{equation*} 
\lim_{\tt\to  +\infty}\sum_{r=1}^l\int_{\tt}^{+\infty}\Bigl[\Bigl\langle\,\widetilde{P}^r(\xi)\,
\Bigl\vert\,{\tau}(P^r,t(\xi))\Bigr\rangle-\widetilde{\lambda}(P^r,\xi)\Bigl]\,d\xi=0\,. 
\end{equation*} 
\end{lemma}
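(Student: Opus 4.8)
The plan is to estimate the boundary term in Lemma~\ref{rescstimadib} by relating the rescaled quantities back to the original, unrescaled flow, where the fixed end--points $P^r$ are constant in time. First I would rewrite the rescaled end--point $\widetilde{P}^r(\tt)=\frac{P^r-x_0}{\sqrt{2(T-t(\tt))}}$ and observe that, since the end--points are fixed on $\partial\Omega$, the vector $P^r-x_0$ is a constant. Thus $|\widetilde{P}^r(\tt)|=\frac{|P^r-x_0|}{\sqrt{2(T-t(\tt))}}$ grows like $\frac{1}{\sqrt{T-t}}$ as $t\to T$, which at first glance looks dangerous; the key point is that the rescaled tangential velocity $\widetilde{\lambda}$ and the inner product $\langle\widetilde{P}^r\,|\,\tau\rangle$ combine favorably once we undo the rescaling in the integral.

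The main step is the change of variables from the rescaled time $\xi$ back to the original time $t$, using $t(\xi)=T-e^{-2\xi}$, hence $dt=2e^{-2\xi}\,d\xi=2(T-t)\,d\xi$ and $d\xi=\frac{dt}{2(T-t)}$. Under this substitution, together with the scaling relations $\widetilde{P}^r=\frac{P^r-x_0}{\sqrt{2(T-t)}}$ and $\widetilde{\lambda}(P^r,\xi)=\sqrt{2(T-t)}\,\lambda(P^r,t)$, I expect the integrand $\bigl[\langle\widetilde{P}^r\,|\,\tau\rangle-\widetilde{\lambda}\bigr]\,d\xi$ to transform into an expression of the form $\frac{1}{2\sqrt{2(T-t)}}\bigl[\langle P^r-x_0\,|\,\tau(P^r,t)\rangle-2(T-t)\lambda(P^r,t)\bigr]\,\frac{dt}{T-t}$, and more usefully into a total derivative plus a controllable remainder. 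Concretely, I would look for a primitive: noting that $\frac{d}{dt}\sqrt{2(T-t)}=-\frac{1}{\sqrt{2(T-t)}}$ and that along the flow the end--point moves only tangentially (since $P^r$ is fixed, its normal velocity vanishes, which forces $k(P^r,t)=0$ at the boundary), one can recognize the integrand as (up to sign) the $t$--derivative of $\frac{|P^r-x_0|^2}{\text{const}\cdot(T-t)}$ or of a similar bounded geometric quantity. The cleanest route is to show that $\int_{\tt}^{+\infty}\bigl[\langle\widetilde{P}^r\,|\,\tau\rangle-\widetilde{\lambda}\bigr]\,d\xi$ equals the value at $\tt$ of an explicit function of the distance $|P^r-x_0|$ and the geometry at the endpoint, bounded uniformly by $C=C(P^r)$.

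The hard part will be handling the apparent singularity as $t\to T$: each factor $\langle\widetilde{P}^r\,|\,\tau\rangle$ and $\widetilde{\lambda}$ individually blows up like $(T-t)^{-1/2}$, so the estimate genuinely relies on the structure of their \emph{difference} and on the fact that the integral in $\xi$ converges despite the growth. I expect this to be resolved by exhibiting an exact primitive so that the integral telescopes, rather than by a crude triangle--inequality bound; the boundary contribution from $\xi=+\infty$ (that is, $t=T$) must be shown to vanish, which uses that $\sqrt{2(T-t)}\,\widetilde{\rho}(\widetilde{P}^r)$ decays (the Gaussian weight $e^{-|\widetilde{P}^r|^2/2}$ suppresses the polynomial growth). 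Finally, the second assertion follows immediately: summing the uniform bound over the finitely many $r\in\{1,\dots,l\}$ and letting $\tt\to+\infty$, the tail of a convergent integral tends to zero, so the limit of the sum is $0$.
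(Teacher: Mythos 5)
There is a genuine gap, and it sits exactly where you placed the ``hard part''. The actual argument (inherited from \cite{mannovtor,MMN13}) needs no primitive, no telescoping, and no cancellation between the two terms of the integrand, for a simple reason you half-state and then misuse: since $\gamma^r(1,t)=P^r$ is \emph{fixed}, the full velocity $k\nu+\lambda\tau$ vanishes at $P^r$, so not only $k(P^r,t)=0$ but also $\lambda(P^r,t)=0$, hence $\widetilde{\lambda}(P^r,\xi)=\sqrt{2(T-t)}\,\lambda(P^r,t)\equiv 0$. Your description of the end--point as ``moving only tangentially'' is backwards (it does not move at all), and your premise that both terms blow up like $(T-t)^{-1/2}$ and that the estimate ``genuinely relies on the structure of their difference'' is false: one term is identically zero, so there is nothing to cancel. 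Consequently the search for an exact primitive cannot succeed: the surviving term is $\langle\widetilde{P}^r(\xi)\,\vert\,\tau(P^r,t(\xi))\rangle$ with $\widetilde{P}^r(\xi)=\frac{P^r-x_0}{\sqrt{2}}\,e^{\xi}$, and $\tau(P^r,t)$ rotates in a way the flow does not control, so candidates like $\frac{d}{dt}\frac{\vert P^r-x_0\vert^2}{c(T-t)}$ (which contains no inner product with $\tau$) do not match the integrand. Indeed, \emph{without} a Gaussian factor the integral genuinely diverges whenever $\langle P^r-x_0\,\vert\,\tau(P^r,T)\rangle\neq 0$, since the integrand then grows like $e^{\xi}$ — exponentially in $\xi$, not polynomially as you write.

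The bound therefore comes entirely from the weight $\widetilde{\rho}(\widetilde{P}^r(\xi))=e^{-\vert\widetilde{P}^r(\xi)\vert^2/2}$, which multiplies this boundary term in the monotonicity formula~\eqref{reseqmonfor-int} (its absence from the lemma's display is a typo; as literally printed the estimate would be false). With $a=\vert P^r-x_0\vert/\sqrt{2}$ one has
\begin{equation}
\int_{\tt}^{+\infty}\bigl\vert\langle\widetilde{P}^r(\xi)\,\vert\,\tau(P^r,t(\xi))\rangle\bigr\vert\,
e^{-\vert\widetilde{P}^r(\xi)\vert^2/2}\,d\xi
\leq\int_{\tt}^{+\infty}a\,e^{\xi}\,e^{-a^2e^{2\xi}/2}\,d\xi
=\int_{ae^{\tt}}^{+\infty}e^{-u^2/2}\,du\leq\sqrt{\pi/2}\,,
\end{equation}
a uniform constant, and the tail on the right manifestly goes to zero as $\tt\to+\infty$ (trivially so if $x_0=P^r$). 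Your final deduction — summing over the finitely many $r$ and using that tails of convergent integrals vanish — is correct, but it rests on a first claim your proposed mechanism cannot deliver; you relegated the Gaussian weight to a side role (killing a boundary contribution of a primitive that does not exist) when it is in fact the whole proof.
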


\subsection{Shrinkers}\label{shri}

\begin{dfnz}\label{shrinkers} A regular $C^2$ network $\SS=\bigcup_{i=1}^n\sigma^i(I^i)$, complete and without end--points, 
is called a {\em regular shrinker} if at every point $x\in\SS$ there holds the {\em shrinkers equation}
\begin{equation}\label{shrinkeq}
\underline{k} + x^\perp=0\,, 
\end{equation}
and if junctions are present, they are only triple junctions forming angles of $120$ degrees.
\end{dfnz}

The name comes from the fact that if $\SS=\bigcup_{i=1}^n\sigma^i(I^i)$ (where $I^i$
is the interval $[0,1]$, $[0,1)$, $(0,1]$ or $(0,1)$) is a shrinker, 
then the evolution given by $\SS_t=\bigcup_{i=1}^n\gamma^i(I^i,t)$, 
where $\gamma^i(x,t)=\sqrt{1-2t}\, \sigma^i(x)$, is a self--similarly shrinking curvature flow
in the time interval $(-\infty,\frac12)$ with $\SS=\SS_{0}$.
Viceversa, if $\SS_t$ is a self--similarly shrinking curvature flow in the maximal time interval $(-\infty,\frac12)$, 
then $\SS_{0}$ is a shrinker.

\begin{dfnz}
A {\em standard triod} is a shrinker triod composed of three halflines from the origin meeting at $120$ degrees.\\
A {\em Brakke spoon} is a shrinker composed by a halfline which intersects a closed curve, 
forming angles of $120$ degrees (first mentioned in~\cite{brakke}).\\
A {\em standard lens} is a shrinker with two triple junctions, it is 
symmetric with respect to two perpendicular axes,
composed by two halflines pointing the origin, posed on a symmetry axis and opposite with respect to the other.
Each halfline intersects two equal curves forming  an angle of $120$ degrees.\\
A {\em fish} is a shrinker with the same topology of the standard lens, but 
symmetric with respect to only one axis.
The two halflines, pointing the origin, intersect two different curves, forming angles of $120$ degrees.
\end{dfnz}
\begin{figure}[H]
\begin{center}
\begin{tikzpicture}[scale=0.6]
%line
\draw[shift={(-10.33,0)}]
(-2,-2) to [out=45, in=-135, looseness=1] (2,2);
\draw[dashed,shift={(-10.33,0)}]
(-2.5,-2.5) to [out=45, in=-135, looseness=1] (-2,-2)
(2,2)to [out=45, in=-135, looseness=1](2.6,2.6);
%standardtriod
\draw[shift={(-3.3,0)}]
(0,0) to [out=90, in=-90, looseness=1] (0,2)
(0,0) to [out=210, in=30, looseness=1] (-1.73,-1)
(0,0) to [out=-30, in=150, looseness=1](1.73,-1);
\draw[dashed, shift={(-3.3,0)}]
(0,2) to [out=90, in=-90, looseness=1] (0,3)
(-1.73,-1) to [out=210, in=30, looseness=1] (-2.59,-1.5)
(1.73,-1) to [out=-30, in=150, looseness=1](2.59,-1.5);
%spoon
\draw[color=black, scale=0.4, shift={(9.875,0)}]
(-3.035,0)to[out= 60,in=180, looseness=1] (2.2,3)
(2.2,3)to[out= 0,in=90, looseness=1] (5.43,0)
(-3.035,0)to[out= -60,in=180, looseness=1] (2.2,-3)
(2.2,-3)to[out= 0,in=-90, looseness=1] (5.43,0);
\draw[color=black, scale=0.4, shift={(9.875,0)}]
(-7,0)to[out= 0,in=180, looseness=1](-3,0);
\draw[color=black, scale=0.4, shift={(9.875,0)}, dashed]
(-9,0)to[out= 0,in=180, looseness=1](-7,0);
%lente
\draw[color=black,scale=0.4, shift={(-23,-15)}]
(-3.035,0)to[out= 60,in=180, looseness=1] (2.2,2.8)
(2.2,2.8)to[out= 0,in=120, looseness=1] (7.435,0)
(2.2,-2.7)to[out= 0,in=-120, looseness=1] (7.435,0)
(-3.035,0)to[out= -60,in=180, looseness=1] (2.2,-2.7);
\draw[color=black,scale=0.4, shift={(-23,-15)}]
(-7,0)to[out= 0,in=180, looseness=1](-3.035,0)
(7.435,0)to[out= 0,in=180, looseness=1](11.4,0);
\draw[color=black,dashed,scale=0.4, shift={(-23,-15)}]
(-9,0)to[out= 0,in=180, looseness=1](-7,0)
(11.4,0)to[out= 0,in=180, looseness=1](13.4,0);
%PESCE BLU
\draw[color=black,shift={(2,-6)},scale=1.5]
(-0.47,0)to[out= 20,in=180, looseness=1](1.5,0.65)
(1.5,0.65)to[out= 0,in=90, looseness=1] (2.37,0)
(-0.47,0)to[out= -20,in=180, looseness=1](1.5,-0.65)
(1.5,-0.65)to[out= 0,in=-90, looseness=1] (2.37,0);
\draw[white, very thick,shift={(2,-6)},scale=1.5]
(-0.47,0)--(-.150,-0.13)
(-0.47,0)--(-.150,0.13);
\draw[color=black,shift={(2,-6)},scale=1.5]
(-.150,0.13)to[out= -101,in=90, looseness=1](-.18,0)
(-.18,0)to[out= -90,in=101, looseness=1](-.150,-0.13);
%retta a 139 per l'origine
\draw[black,shift={(2,-6)},scale=1.5]
(-.150,0.13)--(-1.13,0.98)
(-.150,-0.13)--(-1.13,-0.98);
\draw[black, dashed,shift={(2,-6)},scale=1.5]
(-1.13,0.98)--(-1.50,1.31)
(-1.13,-0.98)--(-1.50,-1.31);
\fill(-3.3,0) circle (2pt);
%\fill(-7,0) circle (2pt);
\fill(-10.33,0) circle (2pt);
\fill(4,0) circle (2pt);
\fill(-8.33,-6) circle (2pt);
\fill(2,-6) circle (2pt);
\path[font=\small]
(3,-5.9) node[left]{$O$}
(-7.4,-5.8) node[left]{$O$}
%(-7,0) node[left]{$O$}
(-10.33,0) node[left]{$O$}
(4.9,.2) node[left]{$O$}
(-3.3,.2) node[left]{$O$};
\path[font=\small]
%(-6,-3) node[left]{$1.b$}
(-10,-3) node[left]{$1$}
(4.4,-3) node[left]{$2.b$}
(-3.4,-3) node[left]{$2.a$}
(3,-9) node[left]{$3.b$}
(-9,-9) node[left]{$3.a$};
\end{tikzpicture}
\end{center}
\begin{caption}{Embedded, regular shrinkers without triple junctions ($1$ line),
with one triple junction ($2.a$ standard triod, $2.b$ Brakke spoon), and with two triple junctions
($3.a$ standard lens, $3.b$ fish).\label{shr}}
\end{caption}
\end{figure}

If we require that these shrinkers are embedded and with multiplicity $1$, they are unique, up to rotations, among the network with the same shape.

We are actually interested in the classification of {\em all} possible complete, embedded, regular shrinkers with at most two triple junctions (and without end--points). The only embedded shrinking curves in $\mathbb{R}^2$ are the lines through the origin and the unit circle, by the work of Abresch-Langer~\cite{ablang1}.
The complete, embedded, connected regular shrinkers with only one triple junction are exactly, up to rotations, the standard triod and the Brakke spoon (see~\cite{chenguo}). 
Now, if a regular shrinker is a tree, all its unbounded curves must be halflines from the origin (see~\cite[Lemma~8.9]{mannovplusch} or use the equation satisfied by the curvature $k_s=k\langle \gamma\,\vert\,\tau\rangle$), which implies that the two triple junctions should coincide with the origin, a contradiction, thus, such a shape is excluded. Then, by an argument of H\"attenschweiler~\cite[Lemma~3.20]{haettenschweiler}, if a regular shrinker contains a region bounded by a single curve, the shrinker must be a Brakke spoon, that is, no other triple junctions can be present. Thus, by looking at the topological shapes in Figure~\ref{clas}, it remains to discuss only two cases: one is the ``lens'' shape and the
other is the shape of the Greek ``theta'' letter (or ``double cell''). It is well known that there exist unique (up to a rotation)
lens--shaped or fish--shaped, complete, embedded, regular shrinkers which are
symmetric with respect to a line through the origin of $\R^2$
(see~\cite{chenguo,schnurerlens}) and it was recently shown in~\cite{balhausman} that it does not exist a theta--shaped shrinker 
(see in Figure~\ref{homtheta} showing how it was supposed to be).
Hence, the ones depicted in Figure~\ref{shr} are actually, up to rotations, the only complete, embedded, regular shrinkers with at most two triple junctions, without end--points.

\begin{figure}[H]
\begin{center}
\begin{tikzpicture}[scale=0.40, rotate=90]
\draw[color=black]
(0,2.4)to[out= 30,in=180, looseness=1] (2.2,3)
(2.2,3)to[out= 0,in=90, looseness=1] (5.43,0)
(0,-2.4)to[out= -30,in=180, looseness=1] (2.2,-3)
(2.2,-3)to[out= 0,in=-90, looseness=1] (5.43,0);
\draw[color=black, rotate=180]
(0,2.4)to[out= 30,in=180, looseness=1] (2.2,3)
(2.2,3)to[out= 0,in=90, looseness=1] (5.43,0)
(0,-2.4)to[out= -30,in=180, looseness=1] (2.2,-3)
(2.2,-3)to[out= 0,in=-90, looseness=1] (5.43,0);
\draw[color=black]
(0,2.4)to[out= 90,in=-90, looseness=0](0,-2.4);
\fill(0,0) circle (3.8pt);
\path[font=\small]
(0,-0.5) node[above]{$O$};
\end{tikzpicture}
\end{center}
\begin{caption}{A theta--shaped shrinker does not exists.\label{homtheta}}
\end{caption}
\end{figure}

If we also consider {\em degenerate} regular shrinkers (see the next section), we also have the union of four halflines
from the origin forming alternate angles of $120$ and $60$ degrees, a configuration which is going to play a key role in the sequel.

\subsection{Degenerate regular networks}\label{degsec}

A ``degenerate network'' is, roughly speaking, a network with one or more ``collapsed'' (degenerate) curves, not visible if one considers it simply as a subset of $\R^2$. To describe its actual ``hidden'', non--collapsed structure we associate a graph to it, where the collapsed curves are present. This is a necessary concept  in order to deal with networks whose curves get shorter and shorter during the flow and ``vanish'' at some time, producing a degenerate network where the associate graph ``remembers'' the structure of the networks before the collapsing time. Moreover, since we want to keep track of the exterior unit tangent vectors of the curves concurring at the triple junctions of the collapsing curves, we also need to ``assign'' some unit vectors to the end--points of the collapsed curves in a way that, in the graph, at every 3--point the sum of such ``assigned'' unit vectors is zero (like for regular networks).

\begin{dfnz}
Consider a couple $(G,\mathbb{S})$ with the following properties:

\begin{itemize}
\item $G=\bigcup_{i=1}^{n}E^i$ is an oriented graph with possible unbounded edges $E^i$, 
such that every vertex has only one or three concurring edges (we call end--points of $G$ the vertices with order one);
\item given a family of $C^1$ curves $\sigma^i:I^i\to\R^2$, 
where $I^i$ is the interval $(0,1)$, $[0,1)$, $(0,1]$ or $[0,1]$,
and orientation preserving homeomorphisms $\varphi^i:E^i\to I^i$, 
then, $\SS=\bigcup_{i=1}^{n}\sigma^{i}(I^i)$
(notice that the interval $(0,1)$ can only appear if it is associated to an unbounded edge $E^i$ without vertices, 
which is clearly a single connected component of $G$);
\item in the case that $I^i$ is $(0,1)$, $[0,1)$ or $(0,1]$, 
the map $\sigma^i$ is a regular $C^1$ curve with unit tangent vector field $\tau^i$;
\item in the case that $I^i=[0,1]$, the map $\sigma^i$ is either a regular $C^1$ curve with unit tangent vector field $\tau^i$, 
or a constant map and in this case it is ``assigned'' also a {\em constant} unit vector $\tau^i:I^i\to\R^2$, 
that we still call unit tangent vector of $\sigma^i$ (we call these maps $\sigma^i$ ``degenerate curves'');
\item for every degenerate curve $\sigma^i:I^i\to\R^2$ with assigned unit vector $\tau^i:I^i\to\R^2$,
we call ``assigned exterior unit tangents'' of the curve $\sigma^i$ at the points $0$ and $1$ of $I^i$, 
respectively the unit vectors $-\tau^i$ and $\tau^i$.
\item the map $\Gamma:G\to\R^2$ given by the union $\Gamma=\bigcup_{i=1}^n(\sigma^i\comp\varphi^i)$ 
is well defined and continuous;
\item for every 3--point of the graph $G$, where the edges $E^i$, $E^j$, $E^k$ concur, 
the exterior unit tangent vectors (real or ``assigned'') at the relative borders of the intervals $I^i$, $I^j$, $I^k$ 
of the concurring curves $\sigma^i$, $\sigma^j$ $\sigma^k$ have zero sum ({\em ``degenerate $120$ degrees condition''}).
\end{itemize}
Then, we call $\SS$ a {\em degenerate regular network}.

If one or several edges $E^i$ of $G$ are mapped under the map $\Gamma:G\to\R^2$ to a single point $p\in\R^2$, we call this
sub--network given by the union $G^\prime$ of such edges $E^i$, the {\em core} of $\SS$ at $p$. 

We call multi--points of the degenerate regular network $\SS$, the images of the vertices of multiplicity three of the graph $G$, by the map $\Gamma$.

We call end--points of the degenerate regular network $\SS$, the images of the vertices of multiplicity one of the graph $G$, by the map $\Gamma$.
\end{dfnz}

\begin{dfnz}
We call {\em degenerate regular shrinker} a degenerate regular network such that every non--collapsed curve satisfies the shrinkers 
equation~\eqref{shrinkeq} .
\end{dfnz}

\section{A geometric quantity}\label{dsuL}
Given the smooth flow $\SS_t=F(\SS,t)$, we take two points $p=F(x,t)$ and
$q=F(y,t)$ belonging to $\SS_t$.
A couple  $\left( p=F(x,t),q=F(y,t)\right)$ is in the
class $\mathfrak{A}$ of the admissible ones if
the segment joining $p$ and $q$ does not intersect the network $\SS_t$ in other points. For any admissible pair $\left( p=F(x,t),q=F(y,t)\right)$ we consider the set 
of the injective curves $\left( \Gamma_{p,q}\right)$ contained in $\SS_t$ connecting $p$ and $q$,
forming with the segment $\overline{pq}$ a Jordan curve.
Thus, it is well defined the area of the open region $\mathcal{A}_{p,q}$ 
enclosed by any Jordan curve constructed in this way and,
for any pair $(p,q)$ we call $A_{p,q}$
the smallest area of such possible regions $\mathcal{A}_{p,q}$.
If $p$ and $q$ are both points of the curves that generates a loop,
we define $\psi(A_{p,q})$ as 
$$
\psi(A_{p,q})=\frac{A}{\pi}\sin\left( \frac{\pi}{A}A_{p,q}\right)\,,
$$
where $A=A(t)$ is the area of the connected component
of $\Omega\setminus\SS_t$ which contains the open segment
joining $p$ and $q$.

We consider the function 
$\Phi_t: \SS\times\SS\to \R\cup\{+\infty\}$ as 
\begin{equation*}
\Phi_t(x,y)= 
\begin{cases}
\frac{\vert p-q\vert^2}{\psi(A_{p,q})}\qquad &\,\text{if}\; x\not=y,\,x,y\; 
\text{are points of a loop},\\
\frac{\vert p-q\vert^2}{A_{p,q}}\qquad &\,\text{if}\; x\not=y,\,x,y\;\text{are 
not both points of a loop},\\
4\sqrt{3}\;\; &\text{  if}\; x\, \text{and} \,y\, \text{coincide with one of the $3-$points}\, O^i \,\text{of}\, \,\SS,\\
+\infty\;\; &\text{ {  if} $x=y\not=O^i$},
\end{cases}
\end{equation*} 
where $p=F(x,t)$ and $q=F(y,t)$.

\begin{rem}
Following the argument of Huisken in~\cite{huisk2},
in the definition of the function $\Phi_t$ we introduce the function $\psi(A_{p,q})$ when
the two points belong to a loop, because we want to maintain the function smooth
also when $A_{p,q}$ is equal to $\frac{A}{2}$.
\end{rem}

In the following, with a little abuse of notation, we consider the function $\Phi_t$ defined on 
$\SS_t\times\SS_t$ and we speak of admissible pair for the couple of points
$(p,q)\in\SS_t\times\SS_t$ instead of $(x,y)\in\SS\times\SS$.

We define $E(t)$ as the infimum of $\Phi_t$ between all admissible couple of points
$p=F(x,t)$ and $q=F(y,t)$:
\begin{equation}\label{equant}
E(t) = \inf_{(p,q)\in\mathfrak{A}}\Phi_t
\end{equation}
for every $t\in[0,T)$.

We call $E(t)$ ``embeddedness measure''.
We notice that similar geometric quantities have already been applied to similar problems 
in~\cite{hamilton3},~\cite{chzh} and~\cite{huisk2}.

The following lemma holds, for its proof in the case of a compact network see~\cite[Theorem~2.1]{chzh}.
\begin{lemma}\label{lemet1}
The infimum of the function $\Phi_t$ between all admissible couples $(p,q)$ is actually a minimum.
Moreover, assuming that $0<E(t)<4\sqrt{3}$, for any minimizing pair 
$(p,q)$ we have $p\ne q$ and
neither $p$ nor $q$ coincides with one of the 3--point $O^i(t)$ of $\SS_t$.
\end{lemma}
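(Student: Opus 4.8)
The plan is to establish the statement in two parts: first that the infimum $E(t)$ is attained (it is a minimum), and second that under the constraint $0 < E(t) < 4\sqrt{3}$, any minimizer is a genuine interior pair, i.e. $p \neq q$ and neither point is a $3$--point. The natural strategy for the first part is a compactness argument on the closure of the admissible set $\mathfrak{A}$, combined with a careful analysis of what happens as an admissible pair degenerates (when $p \to q$, or when the connecting segment develops a tangency or an extra intersection with $\SS_t$, i.e. when we reach the boundary $\partial\mathfrak{A}$). Since $\SS_t$ is compact, $\SS_t \times \SS_t$ is compact, so a minimizing sequence $(p_j, q_j)$ has a convergent subsequence $(p_j, q_j) \to (p_\infty, q_\infty)$; the whole difficulty is in ruling out that the limit escapes the admissible class in a way that destroys lower semicontinuity of $\Phi_t$.

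First I would record the boundary behaviour of $\Phi_t$ that forces the minimizer inward. The key observation is the local lower bound: as $p \to q$ along the network (the diagonal $x=y \neq O^i$), one has $\Phi_t \to +\infty$, because $|p-q|^2$ vanishes quadratically in arclength while the enclosed area $A_{p,q}$ (or $\psi(A_{p,q})$) vanishes only to first order --- so the quotient blows up. This is precisely why the diagonal value is set to $+\infty$ for $x=y\neq O^i$, and it shows that a minimizing sequence cannot collapse to a non--junction diagonal point whenever $E(t) < +\infty$. Second, at a $3$--point the prescribed value is $4\sqrt{3}$; the $120$--degree (Herring) condition is exactly what makes the limiting value of $\Phi_t$, as $(p,q) \to (O^i, O^i)$ along admissible pairs with the enclosed region shrinking into the junction, equal to $4\sqrt{3}$ --- the number $4\sqrt{3}$ arises as $\lim |p-q|^2 / A_{p,q}$ for two segments meeting at $120$ degrees, since a ``wedge'' of opening $2\pi/3$ with chord of length $\ell$ and the degenerating area give this limiting ratio. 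Hence the assumption $E(t) < 4\sqrt{3}$ strictly excludes a minimizer sitting at a $3$--point: any such candidate would give the value $4\sqrt{3} > E(t)$, contradicting minimality.

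With these endpoint values in hand, the remaining case to control is when $(p_j, q_j)$ converges to a pair $(p_\infty, q_\infty)$ with $p_\infty \neq q_\infty$ but the open segment $\overline{p_\infty q_\infty}$ acquires a new (tangential) intersection with $\SS_t$, so the limit lies on $\partial\mathfrak{A}$. Here I would argue that at such a boundary configuration the minimal enclosed area $A_{p_\infty,q_\infty}$ is still well defined and the functional is continuous across this boundary, because the Jordan--curve construction and its enclosed area depend continuously on the endpoints even when a tangency forms; alternatively, a tangential intersection can be shown not to lower $\Phi_t$, so such limits are harmless. Combining the three exclusions --- no collapse to the diagonal (value $+\infty$), no escape to a $3$--point (value $4\sqrt{3} > E(t)$), and lower semicontinuity across the admissible boundary --- yields that $(p_\infty, q_\infty)$ is itself an admissible pair realizing the infimum, so the infimum is a minimum and the minimizer satisfies $p \neq q$ with neither endpoint a $3$--point.

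\textbf{The main obstacle} I expect is the careful bookkeeping of the area functional $A_{p,q}$ (and its smoothed version $\psi(A_{p,q})$ for loops) along the boundary $\partial\mathfrak{A}$: one must verify that the ``smallest enclosed area'' varies continuously (and in particular does not jump down) as the connecting segment develops a tangency or as one endpoint slides past a junction, and that the $120$--degree condition genuinely produces the threshold $4\sqrt{3}$ as the junction limit rather than merely an upper or lower bound. The smoothing function $\psi$ was introduced precisely to keep $\Phi_t$ well behaved when $A_{p,q} = A/2$ (as the Remark notes, following Huisken), so I would lean on that to guarantee the functional is continuous on the loop portion of the domain. The reference to \cite[Theorem~2.1]{chzh} for the compact case suggests that the attainment of the minimum follows the same compactness scheme, and I would adapt that argument, with the two endpoint computations above supplying the strict separation from the threshold $4\sqrt{3}$ that pins the minimizer to the interior.
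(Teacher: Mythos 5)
The paper never writes out a proof of this lemma: it only points to \cite[Theorem~2.1]{chzh} for the compact case, so the comparison is with that compactness--plus--threshold scheme, of which your proposal has the correct skeleton. But two of your three key steps are defective as written. First, your asymptotics at the diagonal are backwards. You claim $\vert p-q\vert^2$ vanishes quadratically while $A_{p,q}$ vanishes ``only to first order'', and conclude that the quotient blows up. If the denominator really vanished to first order in the arclength separation $\delta$, the quotient would behave like $\delta^2/\delta=\delta\to 0$, so the diagonal would be a \emph{minimizing} set and the lemma would fail. The correct count is that the minimal enclosed area vanishes to \emph{third} order: for two points at arclength distance $\delta$ on a curve with curvature $k$, the region between the chord and the arc has area of order $\vert k\vert\delta^3/12$, whence $\Phi_t\sim 12/(\vert k\vert\delta)\to+\infty$. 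Your conclusion is right, but the reason you give proves the opposite.

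Second, and more seriously, your treatment of the $3$--points establishes only half of the ``moreover'' clause. The computation producing $4\sqrt{3}$ is correct: for points at distances $a$, $b$ along two curves meeting at $120$ degrees one gets $\vert p-q\vert^2/A_{p,q}\to\frac{4}{\sqrt{3}}\bigl(\frac{a}{b}+\frac{b}{a}+1\bigr)\ge 4\sqrt{3}$, with equality at $a=b$, which is why the conventional value $\Phi_t(O^i,O^i)=4\sqrt{3}$ is consistent and why $E(t)<4\sqrt{3}$ excludes the pair $(O^i,O^i)$ and sequences collapsing to it. But it says nothing about a minimizing pair with $p=O^i$ and $q\neq p$ elsewhere on the network: for such a pair $\Phi_t$ is given by the ratio formula, not by the conventional value, and nothing in your argument prevents it from equalling $E(t)<4\sqrt{3}$. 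Excluding exactly this configuration is the substantive content of ``neither $p$ nor $q$ coincides with one of the $O^i$'' (and is what Proposition~\ref{lemet2} later relies on to perform smooth local variations); it requires a one--sided first--variation argument at the junction, moving $p$ into each of the three concurring curves and exploiting the Herring condition $\sum_j\tau^j=0$ --- with some care, since the minimal--area competitor region may switch as the direction varies --- and this step is entirely absent from your proposal. Finally, your continuity claim across $\partial\mathfrak{A}$ is asserted rather than proved, and it is not innocuous: when the segment develops a new interior intersection the competitor class defining $A_{p,q}$ enlarges, so the minimal area is in general only semicontinuous there. The standard repair is a splitting argument at the first touching point $r$ of the segment with $\SS_t$ (the sub--pairs $(p,r)$ and $(r,q)$ are admissible) combined with the mediant inequality $\frac{a^2+b^2}{A_1+A_2}\ge\min\bigl\{\frac{a^2}{A_1},\frac{b^2}{A_2}\bigr\}$, and this is a step one must actually carry out rather than dismiss as harmless.
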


Notice that the network $\SS_t$ is embedded {\em if and only if} $E(t)>0$.\\
Moreover, $E(t)\leq 4\sqrt{3}$ always holds, thus when
$E(t)>0$ the two points $(p,q)$ of a minimizing pair can coincide if
and only if $p=q=O^i(t)$.\\
Finally, since the evolution is smooth it is easy to see that the
function $E:[0,T)\to\R$ is locally Lipschitz,
in particular, $\frac{dE(t)}{dt}>0$ exists for almost every time $t\in[0,t)$.

\begin{figure}[H]
\begin{center}
\begin{tikzpicture}[rotate=25, scale=1.25]
%central
\draw[color=black,scale=1,domain=-3.15: 3.15,
smooth,variable=\t,rotate=0]plot({1*sin(\t r)},
{1*cos(\t r)}) ; 
\draw[scale=0.5]
(-2,0) to [out=45, in=-160, looseness=1] (-0.85,0.25)
(-0.85,0.25) to [out=-40, in=150, looseness=1] (0.75,-0.35)
(-0.85,0.25)to [out=80, in=-90, looseness=1] (0,2)
(0.75,-0.35)to [out=30, in=-120, looseness=1](2,0)
(0.75,-0.35)to [out=-90, in=90, looseness=1](0,-2);
%right
\draw[color=black,scale=1,domain=-3.15: 3.15,
smooth,variable=\t,rotate=0,shift={(2,0)}]plot({1*sin(\t r)},
{1*cos(\t r)}) ; 
\draw[scale=0.5]
(6,0) to [out=-135, in=20, looseness=1] (4.85,-0.25)
(4.85,-0.25) to [out=140, in=-30, looseness=1] (3.25,0.35)
(4.85,-0.25)to [out=-100, in=90, looseness=1] (4,-2)
(3.25,0.35)to [out=-150, in=90, looseness=1](2,0)
(3.25,0.35)to [out=90, in=-90, looseness=1](4,2);
%left
\draw[color=black,scale=1,domain=-3.15: 3.15,
smooth,variable=\t,rotate=0,shift={(-2,0)}]plot({1*sin(\t r)},
{1*cos(\t r)}) ; 
\draw[scale=0.5]
(-2,0) to [out=-135, in=40, looseness=1] (-3.15,-0.25)
(-3.15,-0.25) to [out=140, in=-30, looseness=1] (-4.75,0.35)
(-3.15,-0.25)to [out=-100, in=90, looseness=1] (-4,-2)
(-4.75,0.35)to [out=-150, in=60, looseness=1](-6,0)
(-4.75,0.35)to [out=90, in=-90, looseness=1](-4,2);
%alto
\draw[color=black,scale=1,domain=-3.15: 3.15,
smooth,variable=\t,rotate=0,shift={(0,2)}]plot({1*sin(\t r)},
{1*cos(\t r)});
\draw[scale=0.5]
(2,4) to [out=-135, in=20, looseness=1] (0.85,3.75)
(0.85,3.75) to [out=140, in=-30, looseness=1] (-0.75,4.35)
(0.85,3.75)to [out=-100, in=90, looseness=1] (0,2)
(-0.75,4.35)to [out=-150, in=60, looseness=1](-2,4)
(-0.75,4.35)to [out=90, in=-90, looseness=1](0,6); 
%basso
\draw[color=black,scale=1,domain=-3.15: 3.15,
smooth,variable=\t,rotate=0,shift={(0,-2)}]plot({1*sin(\t r)},
{1*cos(\t r)}) ; 
\draw[scale=0.5]
(2,-4) to [out=-135, in=20, looseness=1] (0.85,-4.25)
(0.85,-4.25) to [out=140, in=-30, looseness=1] (-0.75,-3.65)
(0.85,-4.25)to [out=-100, in=90, looseness=1] (0,-6)
(-0.75,-3.65)to [out=-150, in=60, looseness=1](-2,-4)
(-0.75,-3.65)to [out=90, in=-90, looseness=1](0,-2);
\path[font=\footnotesize,rotate=-25]
(-2.75,-0.4)  node[left]{$\mathbb{H}^1$}
(0.3,-2.77) node[left]{$\mathbb{H}^2$}
(3.35,0.4) node[left]{$\mathbb{H}^3$}
(0.1,2.85) node[left]{$\mathbb{H}^4$}
(-0.35,-0.45)  node[left]{$P^1$}
(0.45,-0.77) node[left]{$P^2$}
(0.9,0.4) node[left]{$P^3$}
(0.15,0.8) node[left]{$P^4$}
(0.1,0.1) node[left]{$O^1$}
(0.55,-0.2) node[left]{$O^2$};
\end{tikzpicture}
\end{center}
\begin{caption}{A tree-shaped network $\mathbb{S}$ with its $\mathbb{H}^i$.}
\end{caption}
\end{figure}

If the network flow $\mathbb{S}_t$ has fixed end--points $\{P^1, P^2,\ldots,P^l\}$
on the boundary of a strictly convex set $\Omega$, 
we consider the flows $\mathbb{H}^i_t$
each obtained as the union of $\mathbb{S}_t$ 
with its reflection $\mathbb{S}^R_i$ with respect to the end--point $P^i$.
We underline that this is still a $C^2$ (or as regular as $\mathbb{S}_t$) curvature flow (as we have a solution till the parabolic boundary, the curvature at the fixed end--points of $\SS_t$ is zero and all the other relations obtained differentiating the evolution equation -- called ``compatibility conditions'', see~\cite[Definition~4.15]{mannovplusch} and the discussion therein -- are satisfied at the end--points) without self--intersections, where $P^i$ is no more an end--point and the
number of triple junctions of ${\mathbb{H}}^i_t$ is exactly twice the number of the ones of $\SS_t$.

We define for the networks $\mathbb{H}^i_t$ the functions $E^i:[0,T)\to\mathbb{R}$, analogous to the function
$E:[0,T)\to\mathbb{R}$ of $\mathbb{S}_t$ and, for every $t\in[0,T)$,
we call $\Pi(t)$ the minimum of the values $E^i(t)$.
The function $\Pi:[0,T)\to\mathbb{R}$ is still a locally Lipschitz 
function (hence, differentiable for almost every time),
clearly satisfying $\Pi(t)\leq E^i(t)\leq E(t)$ for all $t\in[0,T)$.
Moreover, as there are no self--intersections by construction, we have $\Pi(0)>0$. 

If we prove that
$\Pi(t)\geq C>0$ for all $t\in[0,T)$, for some constant $C\in\mathbb{R}$,
then we can conclude that also $E(t)\geq C>0$  for all $t\in[0,T)$.

\begin{teo}\label{dlteo} 
Let $\Omega$ be a open, bounded, strictly convex subset of $\mathbb{R}^2$.
Let  $\mathbb{S}_0$ be an initial network
with two triple junctions, 
and let $\mathbb{S}_t$ be a smooth 
evolution by curvature of $\mathbb{S}_0$
defined in a maximal time interval $[0,T)$.

Then,  there exists  a constant $C>0$ depending only on $\SS_0$ 
such that $E(t)\ge C>0$ for every $t\in[0,T)$.
In particular, the networks $\SS_t$ remain embedded during the flow.
\end{teo}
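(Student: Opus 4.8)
The plan is to prove the quantitative lower bound by a maximum--principle (differential inequality) argument on the auxiliary function $\Pi$, in the spirit of Hamilton's isoperimetric estimate~\cite{hamilton3}, Huisken's distance comparison~\cite{huisk2}, and the triod case treated in~\cite{MMN13,chzh}. As observed just before the statement, since $\Pi(t)\le E^i(t)\le E(t)$ and $\Pi(0)>0$, it suffices to show $\Pi(t)\ge C>0$ on $[0,T)$; the advantage of passing to the doubled flows $\mathbb{H}^i_t$ is that the fixed end--point $P^i$ becomes a genuine interior (smooth, zero--curvature) point of the curvature flow $\mathbb{H}^i_t$, so that the boundary terms at the end--points which would otherwise appear in $\frac{d}{dt}E$ are removed. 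I would first check that each $\mathbb{H}^i_t$ is an embedded $C^2$ curvature flow (using the strict convexity of $\Omega$ and the vanishing of the curvature at the end--points) and then, by a reflection comparison, reduce to the case in which a pair realizing $\Pi(t)$ consists of two interior points, neither of them an end--point: if a minimizing pair for some $E^{i_0}$ touched a remaining end--point $P^j$, reflecting through $P^j$ (i.e. passing to $\mathbb{H}^j$) would produce a straddling admissible pair with no larger value, so that the infimum defining $\Pi$ is attained at interior points.

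Next I would set up the differential inequality. Since the evolution is smooth and $\Pi$ is locally Lipschitz, $\Pi$ is differentiable for a.e. $t$, and it suffices to control $\frac{d}{dt}\Pi$ at such times when $0<\Pi(t)<4\sqrt 3$. By Lemma~\ref{lemet1} applied to the relevant $\mathbb{H}^{i_0}_t$, a minimizing pair $(p,q)$ then satisfies $p\ne q$ and neither point is a $3$--point, hence both lie on smooth arcs. By Hamilton's trick, at a.e. such $t$ the derivative $\frac{d}{dt}\Pi$ equals the partial time derivative of $\Phi_t$ at the (frozen) minimizing pair, since the first spatial variations in $x$ and $y$ vanish there; the positive semidefiniteness of the spatial Hessian will be used to sign the resulting expression once the normal velocity $k\nu=\partial_{ss}\gamma$ is rewritten through second spatial derivatives. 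Because the whole construction is geometric (invariant under reparametrization), the tangential components $\lambda\tau$ of the velocity drop out and only the normal motion $k\nu$ contributes; the vanishing of the first variations translates into explicit angle relations between the two unit tangents $\tau_p,\tau_q$ and the chord $p-q$, together with the identities for the first derivatives of the enclosed area $A_{p,q}$ (respectively of $\psi(A_{p,q})$) obtained from the evolution equation~\eqref{evolarea}.

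The heart of the matter is then the sign computation: inserting the first-- and second--order conditions into the partial time derivative of $\Phi_t$ and keeping track of the Gauss--Bonnet contribution $-2\pi+m\pi/3$ to $\frac{d}{dt}A_{p,q}$, I would show that $\frac{d}{dt}\Pi(t)\ge 0$ whenever $0<\Pi(t)<4\sqrt 3$. The correction $\psi(A_{p,q})=\frac{A}{\pi}\sin(\frac{\pi}{A}A_{p,q})$ is tailored to the comparison with the shrinking circle, which is the equality case; this is exactly why it is used on loops (and why, as noted in the Remark, it keeps $\Phi_t$ smooth at $A_{p,q}=A/2$). Since $\Pi$ cannot exceed $4\sqrt 3$ and cannot decrease while it is below $4\sqrt 3$, this monotonicity yields $\Pi(t)\ge\min\{\Pi(0),4\sqrt 3\}=\Pi(0)>0$ for all $t\in[0,T)$, and therefore $E(t)\ge\Pi(t)\ge\Pi(0)=:C>0$, so that $\mathbb{S}_t$ stays embedded.

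The step I expect to be the main obstacle is precisely this sign computation, and in particular making the two cases uniform: for pairs not both on a loop the comparison is with the area $A_{p,q}$, while for pairs on a loop it is with $\psi(A_{p,q})$, and one must verify that the curvature terms produced by $k\nu$ combine with the second--order spatial information to give a nonnegative total in both regimes. A secondary difficulty is the careful treatment of the degenerate limits of minimizing pairs --- when $p\to q$, when a point approaches a $3$--point (where the value $4\sqrt 3$ enters), or when it approaches a remaining end--point --- which is where Lemma~\ref{lemet1}, the strict convexity of $\Omega$, and the reflection reduction to the doubled networks $\mathbb{H}^i_t$ are all needed to guarantee that the minimum is attained at a regular interior configuration for which the differential inequality applies.
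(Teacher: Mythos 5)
Your overall strategy is the same as the paper's (reflect across each fixed end--point to obtain the doubled flows $\mathbb{H}^i_t$, work with $\Pi(t)=\min_i E^i(t)$, and run Hamilton's trick together with the first-- and second--variation identities and Gauss--Bonnet to get a differential inequality at small values of $\Pi$), but there are two problems, one fixable and one genuine. The fixable one is your threshold $4\sqrt{3}$: the paper's Proposition~\ref{lemet2} proves $\frac{dE(t)}{dt}>0$ only for $0<E(t)<1/4$, and the smallness is used in an essential way. The first--variation relation $\cot\alpha(p)=-\frac{E(t)}{4}\cos\bigl(\frac{\pi}{A}A_{p,q}\bigr)$ pins the chord angle to $\frac{\pi}{2}<\alpha<\frac{7\pi}{12}$ only when $E(t)<4(2-\sqrt{3})$, and the closing estimates, e.g. $\bigl(\frac{\pi}{6}\bigr)\cos\bigl(\frac{\pi}{3}\bigr)-E(t)\sin^2\bigl(\frac{\pi}{3}\bigr)>0$ and $\frac{4}{3}\bigl(\sin\frac{\pi}{3}-\frac{\pi}{3}\cos\frac{\pi}{3}\bigr)-E(t)>0$, fail outright for $E(t)$ of order $4\sqrt{3}$. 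Since only \emph{some} positive threshold is needed, replacing $4\sqrt{3}$ by $1/4$ repairs this, at the price of the weaker (but sufficient) conclusion $\Pi(t)\geq\min\{\Pi(0),\,1/4,\,\dots\}$ rather than $\Pi(t)\geq\Pi(0)$.

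The genuine gap is that your reduction ``the infimum defining $\Pi$ is attained at interior points, then apply the differential inequality'' does not cover all interior configurations. Each $\mathbb{H}^i_t$ has \emph{four} triple junctions, and the monotonicity computation is sensitive to the number of $3$--points along the geodesic $\Gamma_{p,q}$: the identity $\frac{d}{dt}A_{p,q}=2\alpha-\frac{4\pi}{3}-\frac{1}{2}\vert p-q\vert(k(p)-k(q))\cos\alpha$ comes from Gauss--Bonnet with exactly two triple junctions on the boundary of the region, and with three or four junctions on $\Gamma_{p,q}$ the constant becomes more negative and the sign argument breaks. The paper accordingly splits the interior case: if $\Gamma_{p,q}$ contains at most two $3$--points, Proposition~\ref{lemet2} applies; if it contains more, one instead proves a uniform positive lower bound on $\vert p-q\vert$ by contradiction --- if minimizing pairs $p_j,q_j$ had $\vert p_j-q_j\vert\to 0$ this could only happen with $p_j,q_j\to P^i$, which by the maximum principle would force \emph{both} triple junctions $O^1(t),O^2(t)$ to converge to the same end--point along a sequence of times, and this is excluded by Lemma~\ref{trinoncoll} (this is where the strict convexity of $\Omega$ actually does its work, not merely in Lemma~\ref{lemet1}). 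Your proposal never invokes Lemma~\ref{trinoncoll} and offers no substitute, so pairs straddling the reflection point whose connecting geodesic passes through three or four junctions are left uncontrolled, and the ``monotone below the threshold'' argument cannot be closed. (By contrast, your handling of pairs touching end--points is fine in spirit: the paper disposes of those cases by the direct uniform bounds $\vert p-q\vert\geq\varepsilon$ and $A_{p,q}\leq\widetilde{C}$, or by passing to the doubled network $\mathbb{H}^j$, essentially as you suggest.)
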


To prove this theorem we first show the next proposition and lemma.

\begin{prop}\label{lemet2} 
For every $t\in[0,T)$ such that
\begin{itemize}
\item $0<E(t)<1/4$,
\item for at least one minimizing pair $(p,q)$ of $\Phi_t$, neither $p$ nor $q$ coincides with one of the fixed end--points $P^i$,
\end{itemize}
if the derivative $\frac{dE(t)}{dt}$ exists, then it is positive.
\end{prop}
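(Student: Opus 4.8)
The plan is to run a distance--comparison argument in the spirit of Huisken~\cite{huisk2} and Hamilton~\cite{hamilton3}, adapted to the junction structure. First I would combine the two hypotheses with Lemma~\ref{lemet1}: since $0<E(t)<1/4<4\sqrt3$, the minimizing pair $(p,q)$ supplied by the second hypothesis consists of two distinct points, neither of which is a $3$--point $O^i$, nor (by assumption) a fixed end--point $P^i$. Hence $(p,q)$ is a genuine interior critical point of $\Phi_t$ on $\SS\times\SS$ (the admissibility constraint being open there, once we check that the open segment $\overline{pq}$ is not tangent to $\SS_t$), so the spatial first variation vanishes and the spatial Hessian is positive semidefinite.

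Write $\ell=\vert p-q\vert$, $w=(p-q)/\ell$, and let $A=A_{p,q}$ (or $\psi(A_{p,q})$ in the loop case), so that $E=\Phi_t(p,q)=\ell^2/A$. Differentiating along the arclength parameters $s_p,s_q$ gives $\partial_{s_p}\ell=\langle\tau_p,w\rangle$ and $\partial_{s_p}A=\tfrac{\ell}{2}\langle\tau_p,Rw\rangle$, and likewise at $q$. The vanishing of $\partial_{s_p}\Phi_t$ and $\partial_{s_q}\Phi_t$ then yields the balancing relations $2\,\partial_{s_p}\ell/\ell=\partial_{s_p}A/A$, equivalently that the chord $\overline{pq}$ meets the curve at the same angle $\theta$ at $p$ and at $q$, with $\tan\theta=4A/\ell^2=4/E$; in particular $E<1/4$ forces $\theta$ close to $\pi/2$, so the chord is almost normal to $\SS_t$ at both endpoints. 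The second--order conditions $\partial_{s_p}^2\Phi_t\ge0$, $\partial_{s_q}^2\Phi_t\ge0$ and the full Hessian inequality then translate, after dividing by $\Phi_t$, into one--sided bounds on the curvatures $k_p,k_q$ at the endpoints; these are exactly the inequalities I will feed into the final estimate.

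Next I would compute $\tfrac{dE}{dt}$ by the envelope principle: at a time of differentiability, since the spatial first variation vanishes, $\tfrac{dE}{dt}$ equals the partial time derivative of $\Phi_t$ with the reference parameters of $p,q$ held fixed, so the points move with the flow, $\dot p=k_p\nu_p+\lambda_p\tau_p$ and $\dot q=k_q\nu_q+\lambda_q\tau_q$. Writing $\tfrac1E\tfrac{dE}{dt}=\tfrac{\partial_t(\ell^2)}{\ell^2}-\tfrac{\partial_t A}{A}$, I would expand $\partial_t(\ell^2)=2\langle p-q,\dot p-\dot q\rangle$ and $\partial_t A$, the latter receiving a curvature (normal--velocity) contribution $\int_\Gamma k\,ds$ from the moving arc $\Gamma$ -- computed via Gauss--Bonnet, hence equal to the turning angle along $\Gamma$ plus a $\pi/3$ correction for each $3$--point the arc crosses -- together with boundary contributions from the tangential motion of the endpoints. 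The decisive cancellation is that every term proportional to the (a priori uncontrolled) tangential speeds $\lambda_p,\lambda_q$ collects into $\lambda_p\,\partial_{s_p}\Phi_t/\Phi_t$ and $\lambda_q\,\partial_{s_q}\Phi_t/\Phi_t$, which vanish by the first--variation relations. Thus $\tfrac{dE}{dt}$ reduces to a purely geometric expression in $\ell,\theta,k_p,k_q$ and the turning of $\Gamma$.

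The final step is to show this expression is strictly positive when $0<E<1/4$. Using $\tan\theta=4/E$ to rewrite the chord term through $\theta$, inserting the Gauss--Bonnet value of $\int_\Gamma k\,ds$, and bounding $k_p,k_q$ by the second--variation inequalities, the sign of $\tfrac{dE}{dt}$ is governed by the competition between the two near--normal arcs flowing away from the chord and the enclosed area being consumed, with the threshold $E<1/4$ precisely ensuring that the first effect dominates. In the loop case the same scheme applies with $A=\psi(A_{p,q})$: there I would use the concavity $\psi''\le0$ and the inequality $\psi\le A_{p,q}$ of the auxiliary function -- introduced for exactly this purpose (cf.\ the Remark after its definition) -- to absorb the extra $\psi'$ terms and keep the estimate valid even when $A_{p,q}$ exceeds half of the loop area. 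I expect the main obstacle to be this last quantitative step: carefully bookkeeping $\partial_t A$ (the turning--angle and $3$--point contributions, and the endpoint boundary terms) and combining it with the second--variation curvature bounds so that positivity holds uniformly for all $E\in(0,1/4)$; the loop case with $\psi$ and the verification that $\overline{pq}$ is genuinely transverse to $\SS_t$ at the minimizer are the secondary technical points.
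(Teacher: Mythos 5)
Your plan reproduces the paper's own proof essentially step by step: first spatial variation giving the equal angles and the relation $\cot\alpha=-\tfrac{E}{4}\cos\bigl(\tfrac{\pi}{A}A_{p,q}\bigr)$ (so $\alpha$ near $\pi/2$ when $E<1/4$), second variation along the simultaneous tangential motion of $p$ and $q$ yielding the one--sided curvature bound, Hamilton's trick for $\tfrac{dE}{dt}$, Gauss--Bonnet with the $\pi/3$ corrections at the $3$--points for the evolution of $A_{p,q}$, and a final quantitative estimate in which the threshold $1/4$ (with $\psi$ in the loop case) forces positivity. The only organizational difference is that you cancel the tangential speeds $\lambda_p,\lambda_q$ against the vanishing first variation, while the paper uses the freedom in Hamilton's trick to choose purely normal motion of the points from the start --- these are equivalent, so your proposal is correct and follows the same route.
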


By simplicity, we consider in detail only the cases shown in Figure~\ref{disuelle}.
The computations in the other situations are analogous. 

\begin{figure}[H]
\begin{center}
\begin{tikzpicture}[scale=0.85]
\draw[color=black,scale=1,domain=-3.15: 3.15,
smooth,variable=\t,shift={(-1,0)},rotate=0]plot({3.25*sin(\t r)},
{2.5*cos(\t r)}) ;
\filldraw[fill=black!10!white,shift={(-8,0)}]
(6,0.5) 
to[out=-165,in=60,  looseness=1] (5.19,0)--(5.19,0)
to[out=-60,in=150,  looseness=1] (7.62,-0.81)--(7.62,-0.81)  
to[out=30,in=-167.7, looseness=1.5] (8.483,-0.59) -- (6,0.5) ;
\draw[color=black!20!white]
(0,1) to[out=-90, in=90, looseness=1] (0,-0.67)
(0.15,0.98) to[out=-90, in=90, looseness=1] (0.15,-0.61)
(0.3,0.92) to[out=-90, in=90, looseness=1] (0.3,-0.585)
(0.45,0.87) to[out=-90, in=90, looseness=1] (0.45,-0.565)
(0.6,0.8) to[out=-90, in=90, looseness=1] (0.6,-0.55)
(0.75,0.7) to[out=-90, in=90, looseness=1] (0.75,-0.52)
(0.9,0.65) to[out=-90, in=90, looseness=1] (0.9,-0.5)
(1.05,0.57) to[out=-90, in=90, looseness=1] (1.05,-0.45)
(1.2,0.52) to[out=-90, in=90, looseness=1] (1.2,-0.4)
(1.35,0.4) to[out=-90, in=90, looseness=1] (1.35,-0.265)
(-0.15,1.025) to[out=-90, in=90, looseness=1] (-0.15,-0.68)
(-0.3,1.035) to[out=-90, in=90, looseness=1] (-0.3,0.15)
(-0.3,0) to[out=-90, in=90, looseness=1] (-0.3,-0.75)
(-0.45,1.02) to[out=-90, in=90, looseness=1] (-0.45,0.35)
(-0.45,0.1) to[out=-90, in=90, looseness=1] (-0.45,-0.74)
(-0.6,0.98) to[out=-90, in=90, looseness=1] (-0.6,0.1)
(-0.6,0) to[out=-90, in=90, looseness=1] (-0.6,-0.67)
(-0.75,0.93) to[out=-90, in=90, looseness=1] (-0.75,-0.63)
(-0.9,0.84) to[out=-90, in=90, looseness=1] (-0.9,-0.6)
(-1.05,0.75) to[out=-90, in=90, looseness=1] (-1.05,-0.573)
(-1.2,0.655) to[out=-90, in=90, looseness=1] (-1.2,-0.56)
(-1.35,0.595) to[out=-90, in=90, looseness=1] (-1.35,-0.545)
(-1.5,0.559) to[out=-90, in=90, looseness=1] (-1.5,-0.535)
(-1.65,0.526) to[out=-90, in=90, looseness=1] (-1.65,-0.527)
(-1.8,0.51) to[out=-90, in=90, looseness=1] (-1.8,-0.517)
(-1.95,0.47) to[out=-90, in=90, looseness=1] (-1.95,0)
(-1.95,-0.2) to[out=-90, in=90, looseness=1] (-1.95,-0.5)
(-2.1,0.445) to[out=-90, in=90, looseness=1] (-2.1,0.18)
(-2.1,-0.17) to[out=-90, in=90, looseness=1] (-2.1,-0.47)
(-2.25,0.405) to[out=-90, in=90, looseness=1] (-2.25,0)
(-2.25,-0.1) to[out=-90, in=90, looseness=1] (-2.25,-0.43)
(-2.4,0.34) to[out=-90, in=90, looseness=1] (-2.4,-0.36)
(-2.55,0.24) to[out=-90, in=90, looseness=1] (-2.55,-0.255)
(-2.7,0.12) to[out=-90, in=90, looseness=1] (-2.7,-0.12);
\draw[color=white,shift={(-8,0)}]
(7.62,-0.81)  
to[out=30,in=-167.7, looseness=1.5] (8.483,-0.59);
\draw[shift={(-8,0)}]
 (3.75,0) node[left] {$P^1$} to[out=30,in=110, looseness=1] (4.37,0) 
to[out= -50,in=180, looseness=1] (5.19,0)node[above] {$O^1$}
to[out=60, in=-145, looseness=1] (7,0.81)
to[out=35,in=150,  looseness=1] (8.62,0.81) 
to[out=-30,in=90, looseness=1.5] (9.44,0)
(7.62,-0.81) 
to[out=30,in=-90,  looseness=1](9.44,0)
(7.62,-0.81) 
to[out=150,in=-60,  looseness=1] (5.19,0)
(7.62,-0.81) 
to[out=-90,in=60,  looseness=1] (9.1,-1.9)node[below]{$P^2$};
\draw
 (0.438,-0.58) -- (-1.98,0.49); 
\path[font= \footnotesize] 
(-2,0.3) node[below] {$\mathcal{A}_{p,q}$};
\path[font= \Large]
(-3.75,-1.8) node[below] {$\Omega$};
\path[font=\large]
(-0.45,0.53) node[below] {$A$}
(-0.3,-1)  node[left]{$O^2$}
(-2.03,1) node[below] {$p$}
(0.7,-0.52) node[below] {$q$};
\end{tikzpicture}
\end{center}
\begin{caption}{The situation considered in the computations of Proposition~\ref{lemet2} .}\label{disuelle}
\end{caption}
\end{figure}

\begin{proof} 
Let $0<E(t)<1/4$ and let $(p,q)$ be a minimizing pair for $\Phi_t$ 
such that the two points are both distinct
from the end--points $P^i$. 
We choose a value  $\eps>0$ smaller than the ``geodesic'' distances of $p$ and $q$ from
the 3--points of $\SS_t$ and between them.\\
Possibly taking a smaller $\eps>0$, we fix an arclength
coordinate $s\in (-\eps,\eps)$ and a local parametrization $p(s)$ of
the curve containing in a neighborhood of $p=p(0)$, with the 
same orientation of the original one.
Let $\eta(s)=\vert p(s)-q\vert$, since  
\begin{equation*}
E(t)=\min_{s\in(-\varepsilon,\varepsilon)}\frac{\eta^2(s)}{\psi(A_{p(s),q})}=
\frac{\eta^2(0)}{\psi(A_{p,q})}\,,
\end{equation*}
if we differentiate in $s$, we obtain 
\begin{equation}\label{eqdsul}
\frac{d\eta^2(0)}{ds}\psi(A_{p(0),q})=
\frac{d\psi(A_{p(0),q})}{ds}\eta^2(0)\,.
\end{equation}
We underline that we are considering the function $\psi$ because we are doing all
the computation for the case shown in Figure~\ref{disuelle}, where there is a loop.
For a network without loops the computations are simpler:
instead of formula~\eqref{eqdsul}, one has 
$$
\frac{d\eta^2(0)}{ds}A_{p(0),q}=\frac{dA_{p(0),q}}{ds}\eta^2(0)\,,
$$ see~\cite[Page~281]{mannovtor}, for instance.

As the intersection of the segment $\overline{pq}$ with the network is transversal, 
we have an angle $\alpha(p)\in(0,\pi)$ 
determined by the unit tangent $\tau(p)$ and the vector $q-p$.\\
We compute
\begin{align*} 
\left.\frac{{ d} \eta^2(s)}{{ d} s}\right\vert_{s=0}
 &=\,
-2 \langle \tau(p)\,\vert\, q-p\rangle = -2 \vert
p-q\vert \cos\alpha(p)\\ 
\left.\frac{{ d} A(s)}{{ d} s}\right\vert_{s=0} 
&=\, 0\\
\left.\frac{{ d} A_{p(s),q}}{{ d} s}\right\vert_{s=0} 
&=\, 
\frac 12 \vert \tau(p) \wedge (q-p)\vert = 
\frac 12 \langle \nu(p)\,\vert\, q-p\rangle = 
\frac 12  \vert p-q\vert \sin\alpha(p)\\
\left.\frac{{ d} \psi(A_{p(s),q})}{{ d} s} \right\vert_{s=0} 
&=\, 
\frac{{ d} A_{p,q}}{{ d} s}\cos\left(\frac{\pi}{A}A_{p,q}\right)\\
\, &=\,
\frac 12  \vert p-q\vert \sin\alpha(p)\cos\left(\frac{\pi}{A}A_{p,q}\right)\,. \\
\end{align*}
Putting these derivatives in equation~\eqref{eqdsul} and
recalling that $\eta^2(0)/\psi(A_{p,q})=E(t)$,  we get
\begin{align}\label{topolino2}
\cot\alpha(p) 
&\,= -\frac{\vert p-q\vert^2}{4\psi(A_{p,q})}\cos\left(\frac{\pi}{A}A_{p,q}\right)\nonumber\\
&\,= -\frac{E(t)}{4}\cos\left(\frac{\pi}{A}A_{p,q}\right)\,.
\end{align}
Since $0<E(t)<\frac{1}{4}<4(2-\sqrt{3})$,
we have $\sqrt{3}-2<\cot\alpha(p)<0$ which
implies
\begin{equation}\label{alpha}
\frac{\pi}{2} <\alpha(p) < \frac {7\pi}{12} \,.
\end{equation}
The same argument clearly holds for the point $q$, hence 
defining $\alpha(q)\in(0,\pi)$ to be the angle determined by the
unit tangent $\tau(q)$ and the vector $p-q$, by equation~\eqref{topolino2}
it follows that $\alpha(p)=\alpha(q)$ and we simply write $\alpha$
for both.\\
We consider now a different variation, moving at the same time the
points $p$ and $q$, in such a way that $\frac{dp(s)}{ds}=\tau(p(s))$ and 
$\frac{dq(s)}{ds}=\tau(q(s))$.\\
As above, letting $\eta(s)=\vert p(s)-q(s)\vert$, by minimality we have
\begin{align}\label{eqdersec}
\frac{{ d} \eta^2(0)}{{ d} s}\left.\psi(A_{p(s),q(s)})\right\vert_{s=0}
&=
\left( \left.\frac{{ d} \psi(A_{p(s),q(s)})}{{ d} s}\right\vert_{s=0}\right) \eta^2(0) \;\;\text{ { and} }\;\;\nonumber\\
\frac{{ d}^2 \eta^2(0)}{{ d} s^2}\left.\psi(A_{p(s),q(s)})\right\vert_{s=0}
&\ge 
\left( \left.\frac{{ d}^2\psi( A_{p(s),q(s)})}{{ d} s^2}\right\vert_{s=0}\right) \eta^2(0)\,.
\end{align}
Computing as before,
\begin{align*}
\left.\frac{{ d} \eta^2(s)}{{ d} s}\right\vert_{s=0}
&=\,2 \langle p-q \,\vert\,\tau(p)-\tau(q)\rangle = -4\vert p-q\vert\cos\alpha \\
\left.\frac{{ d} A_{p(s),q(s)}}{{ d} s}\right\vert_{s=0} 
&=\, -\frac12\langle p-q\,\vert\,\nu(p)+\nu(q)\rangle
=+\vert p-q\vert\sin\alpha\\
\left.\frac{{ d}^2 \eta^2(s)}{{ d} s^2}\right\vert_{s=0} 
&=\,2 \langle \tau(p)-\tau(q) \,\vert\,\tau(p)-\tau(q)\rangle +
2 \langle p-q \,\vert\, k(p)\nu(p) -k(q)\nu(q)\rangle\\
&=\, 2\vert \tau(p)-\tau(q)\vert^2+
2 \langle p-q \,\vert\, k(p)\nu(p) -k(q)\nu(q)\rangle\\
&=\, 8\cos^2\alpha+
2 \langle p-q \,\vert\, k(p)\nu(p) -k(q)\nu(q)\rangle\\
\left.\frac{{ d}^2 A_{p(s),q(s)}}{{ d} s^2}\right\vert_{s=0} 
&=\,-\frac12\langle \tau(p)-\tau(q)\,\vert\,\nu(p)+\nu(q)\rangle
+\frac12\langle p-q\,\vert\,k(p)\tau(p)+k(q)\tau(q)\rangle\\
&=\,-\frac12\langle\tau(p)\,\vert\,\nu(q)\rangle
+\frac12\langle \tau(q)\,\vert\,\nu(p)\rangle\\
&+\frac12\langle p-q\,\vert\,k(p)\tau(p)+k(q)\tau(q)\rangle\\
&=\,-2\sin\alpha\cos\alpha
-1/2\vert p-q\vert(k(p)-k(q))\cos\alpha\\
\left.\frac{{ d}^2 \psi(A_{p(s),q(s)})}{{ d} s^2}\right\vert_{s=0} 
&=\,\left.\frac{{ d}}{{ d} s}\left\lbrace
\frac{{ d} A_{p(s),q(s)}}{{ d} s}
\cos\left(\frac{\pi}{A}A_{p(s),q(s)}\right) 
\right\rbrace\right\vert_{s=0}\\
&=\, (-2\sin\alpha\cos\alpha-\frac12\vert p-q\vert(k(p)-k(q))\cos\alpha)
\cos\left( \frac{\pi}{A}A_{p,q}\right) \\
&\,-\frac{\pi}{A}\vert p-q\vert^2 \sin^2\alpha\sin\left(\frac{\pi}{A}A_{p,q}\right)\,. \\
\end{align*} 
Substituting the last two relations in the second inequality 
of~\eqref{eqdersec}, we get 
\begin{align*}
&(8\cos^2\alpha+\, 
2\langle p-q \,\vert\, k(p)\nu(p) -k(q)\nu(q)\rangle)\psi(A_{p,q})\\
&\geq\, \vert p-q\vert^2
\left\lbrace 
(-2\sin\alpha\cos\alpha-\frac12\vert p-q\vert(k(p)-k(q))\cos\alpha)
\cos\left( \frac{\pi}{A}A_{p,q}\right)\right.\\
&\,\left.-\frac{\pi}{A}\vert p-q\vert^2 \sin^2\alpha\sin\left(\frac{\pi}{A}A_{p,q}\right) 
\right\rbrace \,,
\end{align*}
hence, keeping in mind that 
$\tan\alpha=
\frac{-4}{E(t)\cos\left(\frac{\pi}{A}A_{p(s),q(s)} \right) }$, we obtain
\begin{align}\label{eqfin}
&2\psi(A_{p,q})\langle p-q \,\vert\, \,k(p)\nu(p) -k(q)\nu(q)\rangle\nonumber\\
&\,+1/2\vert p-q\vert^3(k(p)-k(q))\cos\alpha 
\cos\left(\frac{\pi}{A}A_{p,q}\right)\nonumber\\
&\,\geq -2\sin\alpha\cos\alpha\vert p-q\vert^2\cos\left(\frac{\pi}{A}A_{p,q}\right)\nonumber\\
&\,-8\psi(A_{p,q})\cos^2\alpha
+\vert p-q\vert^4 \sin^2\alpha
\left[-\frac{\pi}{A}\sin\left(\frac{\pi}{A}A_{p,q}\right) \right]\nonumber\\
&=\,-2\psi(A_{p,q})\cos^2\alpha
\left(\tan\alpha\frac{\vert p-q\vert^2}{\psi(A_{p,q})}
\cos\left(\frac{\pi}{A}A_{p,q}\right)+ 4\right)\nonumber\\
&\,+\vert p-q\vert^4\sin^2\alpha
\left[ -\frac{\pi}{A}\sin\left(\frac{\pi}{A}A_{p,q}\right) \right] \nonumber\\
&=\,+\vert p-q\vert^4\sin^2\alpha
\left[ -\frac{\pi}{A}\sin\left(\frac{\pi}{A}A_{p,q}\right) \right]\,. \nonumber\\
\end{align}
We now compute the derivative
$\frac{dE(t)}{dt}$ by means of the Hamilton's trick (see~\cite{hamilton2}),
that is,
$$
\frac{dE(t)}{dt} = \frac{\partial}{\partial t}\Phi_t(\overline{p},\overline{q})\,,
$$
for {\em any} minimizing pair $(\overline{p},\overline{q})$ for $\Phi_t$.
In particular, $\frac{dE(t)}{dt}=\frac{\partial}{\partial t}\Phi_t(p,q)$ and,
we recall,  $\frac{\vert p-q\vert^2}{\psi(A_{p,q})}=E(t)$.\\
Notice that by minimality of the pair $(p,q)$, we are free to choose the 
``motion'' of the points $p(\tau)$, $q(\tau)$ ``inside'' the networks $\Gamma_{\tau}$
in computing such partial derivative, that is
$$
\frac{dE(t)}{dt}=\frac{\partial}{\partial t}\Phi_t(p,q)=\frac{d}{d\tau}\Phi_t(p(\tau),q(\tau))\Big\vert_{\tau=t}\,.
$$
Since locally the networks are moving by curvature and we know that
neither  $p$ nor $q$ coincides with the 3--point, 
we can find $\varepsilon>0$ and two  smooth curves $p(\tau), q(\tau)\in\Gamma_\tau$  for
every $\tau\in(t-\varepsilon,t+\varepsilon)$ such that
\begin{align*}
p(t) &=\, p \qquad \text{ and }\qquad \frac{dp(\tau)}{d\tau} = k(p(\tau), \tau)
~\nu(p(\tau),\tau)\,, \\
q(t) &=\, q \qquad \text{ and }\qquad \frac{dq(\tau)}{d\tau}= k(q(\tau),\tau)
~\nu(q(\tau),\tau)\,. 
\end{align*}
Then,
\begin{equation}\label{eqderE}
\left.\frac{dE(t)}{dt} =
\frac{\partial}{\partial t}\Phi_{t}(p,q)
=\frac{1}{\left[ \psi(A_{p,q})\right] ^2}
\left(\psi(A_{p,q})
\frac{d\vert p(\tau)-q(\tau)\vert^2}{d\tau} - 
\vert p-q\vert^2 \frac{d\psi(A_{p(\tau),q(\tau)})}{d\tau}\right)\right\vert_{\tau=t}\,.
\end{equation}
With a straightforward computation we get the following equalities, 
\begin{align*}
\left.\frac{{ d} \vert p(\tau)-q(\tau)\vert^2}{{ d} \tau} \right\vert_{\tau=t}
&\,= 2 \langle p-q\,\vert\, k(p)\nu(p) -k(q)\nu(q)\rangle\\
\left.\frac{{ d} (A(\tau))}{{ d} \tau}\right\vert_{\tau=t} 
&\,= -\frac{4\pi}{3}\\
\left.\frac{{ d} A_{p(\tau),q(\tau)}}{{ d} \tau}\right\vert_{\tau=t} 
&\,=\int_{\Gamma_{p,q}} \langle\underline{k}(s)\,\vert\nu_{\xi_{p,q}}\rangle\,ds
+ \frac12\vert p-q\vert\langle {\nu_{[p,q]}}\,\vert\, k(p)\nu(p)+k(q)\nu(q)\rangle\\
&\,= 2\alpha -\frac{4\pi}{3}-\frac 12 \vert p-q\vert(k(p)-k(q))\cos\alpha\\
\left.\frac{{ d} \psi(A_{p(\tau),q(\tau)})}{{ d} \tau} \right\vert_{\tau=t}
&\,=-\frac{4\pi}{3}\left[\frac{1}{\pi}\sin\left(\frac{\pi}{A}A_{p,q}\right)
-\frac{A_{p,q}}{A}\cos\left(\frac{\pi}{A}A_{p,q} \right)\right] \\
&\,+\left(2\alpha -\frac{4\pi}{3}-\frac 12 \vert p-q\vert(k(p)-k(q))\cos\alpha\right)
\cos\left(\frac{\pi}{A}A_{p,q} \right)\\
\end{align*}
where we wrote $\nu_{\xi_{p,q}}$ and $\nu_{[p,q]}$ for the exterior
unit normals to the region $A_{p,q}$, respectively at the
points of the geodesic $\xi_{p,q}$ and of the segment $\overline{pq}$.\\
We remind that in general  $\frac{{ d} (A(t))}{{ d} t}=-2\pi+m\left(\frac{\pi}{3}\right) $
where $m$ is the number of triple junctions of the loop
(see~\eqref{evolarea}), we obtain  $\frac{{ d} (A(t))}{{ d} t}=-\frac{4\pi}{3}$
because we are referring to Figure~\ref{disuelle}, where there is a loop with exactly two triple junctions.\\
Substituting these derivatives in equation~\eqref{eqderE} we get
\begin{align*}
\frac{dE(t_0)}{dt} 
=&\,\frac{2\langle p-q\,\vert\, k(p)\nu(p) -k(q)\nu(q)\rangle}{\psi(A_{p,q})}\\
-&\frac{\vert p-q\vert^2}{(\psi(A_{p,q}))^2}
\left\lbrace 
-\frac{4\pi}{3}\left[\frac{1}{\pi}\sin\left(\frac{\pi}{A}A_{p,q}\right)
-\frac{A_{p,q}}{A}\cos\left(\frac{\pi}{A}A_{p,q}\right)\right]
\right. \\
&\left. +\left(2\alpha -\frac{4\pi}{3}-\frac 12 \vert p-q\vert(k(p)-k(q))\cos\alpha\right)
\cos\left(\frac{\pi}{A}A_{p,q} \right)
 \right\rbrace \\
\end{align*}
and, by equation~\eqref{eqfin}, \\
\begin{align*}
\frac{dE(t_0)}{dt} 
\geq 
&\,-\frac{\vert p-q\vert^2}{(\psi(A_{p,q}))^2}
\left\lbrace 
-\frac{4}{3}\sin\left(\frac{\pi}{A}A_{p,q}\right)
+\frac{4\pi}{3}\frac{A_{p,q}}{A}\cos\left(\frac{\pi}{A}A_{p,q}\right) \right.\\
&\,\left. +\left(2\alpha -\frac{4\pi}{3}\right)
\cos\left(\frac{\pi}{A}A_{p,q} \right)+\frac{\pi}{A}\vert p-q \vert^2 \sin^2(\alpha) 
\sin\left(\frac{\pi}{A}A_{p,q}\right)
\right\rbrace\,.\\
\end{align*}
It remains to prove that the quantity 
\begin{align*}
&\frac{4}{3}\sin\left(\frac{\pi}{A}A_{p,q}\right)
-\frac{4\pi}{3}\frac{A_{p,q}}{A}\cos\left(\frac{\pi}{A}A_{p,q} \right) 
 +\left(\frac{4\pi}{3}-2\alpha\right)
\cos\left(\frac{\pi}{A}A_{p,q} \right)\\
&-\frac{\pi}{A}\vert p-q \vert^2 \sin^2(\alpha) 
\sin\left(\frac{\pi}{A}A_{p,q}\right)\\
\end{align*}
is positive.\\
As $E(t)=\frac{\vert p-q\vert^2}{\psi(A_{p,q})}=\frac{\vert p-q\vert^2}{\frac{A}{\pi}\sin(\frac{\pi}{A}A_{p,q})}$
we can write
\begin{align*}
&\frac{4}{3}\sin\left(\frac{\pi}{A}A_{p,q}\right)
-\frac{4\pi}{3}\frac{A_{p,q}}{A}\cos\left(\frac{\pi}{A}A_{p,q} \right) 
 +\left(\frac{4\pi}{3}-2\alpha\right)
\cos\left(\frac{\pi}{A}A_{p,q} \right)\\
&-\frac{\pi}{A}\vert p-q \vert^2 \sin^2(\alpha) 
\sin\left(\frac{\pi}{A}A_{p,q}\right)\\
&= \frac{4}{3}\sin\left(\frac{\pi}{A}A_{p,q}\right)
-\frac{4\pi}{3}\frac{A_{p,q}}{A}\cos\left(\frac{\pi}{A}A_{p,q} \right) 
 +\left(\frac{4\pi}{3}-2\alpha\right)
\cos\left(\frac{\pi}{A}A_{p,q} \right)\\
&-E(t)\sin^2(\alpha) 
\sin^2\left(\frac{\pi}{A}A_{p,q}\right)
\end{align*}
We notice that using~\eqref{alpha}, we can evaluate the sign of $\frac{4\pi}{3}-2\alpha$.\\
We conclude the estimate diving it in two cases related to the value of $\frac{A_{p,q}}{A}$.\\
If $0\leq \frac{A_{p,q}}{A}\leq \frac 13$,
we have
\begin{align*}
\frac{dE(t_0)}{dt} 
&\geq 
\,  
\frac{4}{3}\sin\left(\frac{\pi}{A}A_{p,q}\right)
-\frac{4\pi}{3}\frac{A_{p,q}}{A}\cos\left(\frac{\pi}{A}A_{p,q} \right) \\
& +\left(\frac{4\pi}{3}-2\alpha\right)
\cos\left(\frac{\pi}{A}A_{p,q} \right)-E(t)\sin^2(\alpha) 
\sin^2\left(\frac{\pi}{A}A_{p,q}\right)\\
&\, \geq 
\left(\frac{4\pi}{3}-2\alpha\right)
\cos\left(\frac{\pi}{A}A_{p,q} \right)-E(t)\sin^2(\alpha) 
\sin^2\left(\frac{\pi}{A}A_{p,q}\right)\\
&\, \geq
\left(\frac{\pi}{6}\right)
\cos\left(\frac{\pi}{3} \right)-E(t) 
\sin^2\left(\frac{\pi}{3}\right)>0\,.
\end{align*}
If $\frac 13\leq \frac{A_{p,q}}{A}\leq \frac 12$,
we get
\begin{align*}
\frac{dE(t_0)}{dt} 
&\geq 
\,  
\frac{4}{3}\sin\left(\frac{\pi}{A}A_{p,q}\right)
-\frac{4\pi}{3}\frac{A_{p,q}}{A}\cos\left(\frac{\pi}{A}A_{p,q}\right)\\ 
&+\left(\frac{4\pi}{3}-2\alpha\right)
\cos\left(\frac{\pi}{A}A_{p,q} \right)-E(t)\sin^2(\alpha) 
\sin^2\left(\frac{\pi}{A}A_{p,q}\right)\\
&\, \geq 
\frac{4}{3}\sin\left(\frac{\pi}{A}A_{p,q}\right)
-\frac{4\pi}{3}\frac{A_{p,q}}{A}\cos\left(\frac{\pi}{A}A_{p,q} \right) 
 -E(t)\sin^2(\alpha) 
\sin^2\left(\frac{\pi}{A}A_{p,q}\right)\\
&\, \geq
\frac{4}{3} \left( 
\sin\left(\frac{\pi}{3}\right)
-\frac{\pi}{3}\cos\left(\frac{\pi}{3}\right) 
 \right) 
 -E(t)>0\,.
\end{align*}
Hence, we have proved that, for every $t$ in an interval such that $0<E(t)>\frac14$
and such that the derivative $\frac{dE(t)}{dt}$ exists, $\frac{dE(t)}{dt}>0$
and this suffices to prove the statement.
\end{proof}

\begin{lemma}[Lemma~14.8 in~\cite{mannovplusch}]\label{trinoncoll}
Let $\Omega$ be a open, bounded, strictly convex subset of $\mathbb{R}^2$.
Let  $\mathbb{S}_0$ be an initial regular network
with two triple junctions, 
and let $\mathbb{S}_t$ be the 
evolution by curvature of $\mathbb{S}_0$
defined in a maximal time interval $[0,T)$.
Then, there cannot be a sequence of times $t_j\to T$, such that,
along such sequence, the two triple junctions converge to the same end--point of the network.
\end{lemma}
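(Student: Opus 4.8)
The plan is to argue by contradiction: suppose that along some sequence $t_j\to T$ both $3$--points $O^1(t_j)$ and $O^2(t_j)$ converge to the same end--point $P^i\in\partial\Omega$, and derive a contradiction from a blow--up at $P^i$ that exploits the strict convexity of $\Omega$. First I would record the local structure forced by the hypothesis. Exactly one curve of $\SS$, say $\sigma$, ends at $P^i$, and its other end is one of the $3$--points, say $O^1$; since $O^1\to P^i$ we get $L_\sigma\to 0$. As $O^2\to P^i$ as well, the curve(s) joining $O^1$ to $O^2$ — and, when a loop is present, the curves forming that loop — also shrink, while every remaining curve keeps length bounded away from zero, its other end--point being a fixed $P^j$ with $j\ne i$.

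Next I would blow up at $x_0=P^i$ by means of the rescaled monotonicity formula, whose boundary terms are controlled by Lemma~\ref{rescstimadib}; along a subsequence the rescaled networks $\widetilde{\SS}_{P^i,\tt_j}$ converge in $C^{1,\alpha}\loc\cap W^{2,2}\loc$ to a (possibly degenerate) regular shrinker $\SS_\infty$. The key geometric input is that, by strict convexity, the rescaled domains exhaust the open half--plane $H$ bounded by the tangent line to $\partial\Omega$ at $P^i$; hence $\SS_\infty\subset\overline H$, and the only finite point of $\SS_\infty$ lying on $\partial H$ is the origin, the rescaled image of $P^i$ (the other end--points escape to infinity).

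I would then contradict $\SS_\infty\subset\overline H$ in two model cases. If $\SS_0$ has a loop collapsing onto $P^i$, the area evolution~\eqref{evolarea} gives $A(t)=c\,(T-t)$ with $c=2\pi-m\tfrac{\pi}{3}>0$, so the rescaled area converges to $c/2>0$ and the loop survives in $\SS_\infty$ as a genuine closed region; but a regular shrinker cannot contain a bounded loop inside a half--plane whose boundary passes through the origin, since along the loop $\underline{k}+x^\perp=0$ forces it to encircle $0$ — contradiction. If instead $\SS_0$ is a tree, then $\SS_\infty$ is a tree shrinker, and by the rigidity recalled in Section~\ref{shri} (all unbounded curves are half--lines from the origin, which would force every $3$--point to sit at $0$) no $3$--point of $\SS_\infty$ may lie at finite nonzero distance from $0$. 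Thus all shrinking curves collapse to the core at the origin and the three non--collapsing curves become half--lines from $0$, whose directions are those from $P^i$ to the three other fixed end--points; by strict convexity each such direction points strictly into $H$. On the other hand, writing the degenerate $120$ degrees conditions at the two collapsed $3$--points — and using that the single edge joining them is shared, so its exterior tangents at $O^1$ and $O^2$ are opposite — a short computation shows that one half--line issuing from $O^2$ is antipodal to the half--line issuing from $O^1$. Two antipodal directions cannot both point strictly into $H$, a contradiction.

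The delicate point, and the step I expect to be the main obstacle, is the rate analysis under parabolic rescaling: deciding, for each shrinking curve, whether it collapses to the core at the origin or survives with positive rescaled size, and setting up the blow--up cleanly at a \emph{fixed} end--point (subconvergence to a regular shrinker, confinement to $\overline H$, vanishing of the boundary terms). For loops this is settled by the exact linear decay of the enclosed area, and for the tree by the rigidity of tree shrinkers. The remaining mixed topologies (the island, and configurations where only one $3$--point collapses) are handled by repeating the same dichotomy component by component, which is routine once the two arguments above are in place.
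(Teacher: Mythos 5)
The paper offers no argument for this lemma at all: its ``proof'' is the citation to Lemma~14.8 of \cite{mannovplusch}. So your proposal must stand on its own, and as written it has two genuine gaps, the first of which you yourself flag but do not close. The heart of the matter is the rate/compactness issue: the hypothesis only gives $O^1(t_j),O^2(t_j)\to P^i$ along some sequence, with no rate. Under the parabolic rescaling at $(P^i,T)$ by the factor $1/\sqrt{2(T-t)}$, the rescaled junctions $\widetilde{O}^p$ may escape to spatial infinity, in which case the blow--up limit at $P^i$ is a single halfline and your three--halfline configuration never appears in the limit, so no contradiction results. Your two proposed fixes do not address this regime: the linear decay $A(t)=c\,(T-t)$ yields a surviving rescaled loop only if the enclosed area tends to zero, which does not follow from the junctions converging to $P^i$ (the loop can keep macroscopic size while both junctions approach $P^i$; likewise the curves joining $O^1$ to $O^2$ need not shrink at all, contrary to your opening reduction), and the rigidity of tree shrinkers says nothing about junctions collapsing at a rate between $\sqrt{T-t}$ and $1$. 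Also, matching your prescribed sequence $t_j$ with the rescaled times produced by Proposition~\ref{resclimit} (which are chosen in a set of infinite measure) needs at least a word, e.g.\ via upper semicontinuity of $\widehat{\Theta}$.

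Second, the final contradiction in the tree case is incomplete. Your antipodality computation is correct: with the shared collapsed edge having opposite assigned exterior tangents at its two ends, the degenerate $120$ degrees conditions force one halfline at $O^2$ to be antipodal to the halfline at $O^1$, and the remaining pair to form angles of $60/120$ degrees with it. But your strictness claim --- that each halfline points strictly into $H$ because its direction is ``from $P^i$ to the other fixed end--points'' --- is false: a blow--up halfline records the infinitesimal direction of the curve at scale $\sqrt{2(T-t_j)}$ and is unrelated to where the curve's far end sits; moreover under parabolic rescaling the strictly convex boundary flattens to $\partial H$, so containment only gives the closed halfplane $\overline{H}$. And the closed constraint is not enough: the configuration with the antipodal pair lying on the tangent line $\partial H$ and the third halfline pointing into $H$ at $60/120$ degrees satisfies all your algebraic conditions and is contained in $\overline{H}$, so ruling it out requires an extra input (a barrier or strong maximum principle argument keeping the network off $\partial\Omega$ at parabolic scale, or a density computation) that the proposal does not supply. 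A smaller issue of the same kind: the claim that the shrinker equation forces any loop to encircle the origin is asserted, not proved; it does hold for the classified shrinkers with at most two triple junctions (Brakke spoon, lens, fish), and invoking that classification, as the paper does elsewhere, would be the clean route. In short, the blow--up--at--the--end--point strategy is in the spirit of the cited Lemma~14.8 of \cite{mannovplusch}, but the two steps above are genuine gaps rather than routine verifications.
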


\begin{rem} Actually, the hypothesis of strict convexity of $\Omega$
can be weakened asking that $\Omega$ is convex and that there are not three aligned end--points of the network on $\partial\Omega$.
\end{rem}

\begin{proof}[Proof of Theorem~\ref{dlteo}]
If $\SS_t$ has not end--points,
the conclusion follows immediately from Proposition~\ref{lemet2}.
Hence, we assume that $\mathbb{S}_t$ has two or four end--points
(in the first case there is a loop, in the second $\mathbb{S}_t$ is a tree),
which are the only possibilities.
Let $t\in[0,T)$ a time such that $0<\Pi(t)<1/4$ and $\Pi$ and all the embeddedness measures $E^i$,
associated to the networks $\mathbb{H}^i_t$ are differentiable at $t$ 
(this clearly holds for almost every time).

Let $E^i(t)=\Pi(t)<1/4$ and $E^i(t)$ is realized by a pair of points $p$ and $q$ in $\mathbb{H}^i_t$,
we separate the analysis in the following case:
\begin{itemize}
\item If the point $p$ and $q$ of the minimizing pair are both end--points of $\mathbb{H}^i$, by construction
$\vert p-q\vert\geq \varepsilon>0$.
Moreover,  the area enclosed in the Jordan curve formed by the segment $\overline{pq}$ and 
by the geodesic curve $\left( \Gamma_{p,q}\right)$ can be 
uniformly bounded by above by a constant $\widetilde{C}$, for instance, 
the area of a ball containing all the networks $\mathbb{H}_t^i$.
Since $\varepsilon>0$ and $\widetilde{C}$ depend only on $\Omega$ 
and on the structure of the initial network $\mathbb{S}_0$
(more precisely on the position of the end--points on the boundary of $\Omega$, that stay fixed during the evolution
and that do not coincide), 
the ratio $\frac{\vert p-q\vert^2}{\psi(A_{p,q})}$ (or $\frac{\vert p-q\vert^2}{A_{p,q}}$,
if $p,q$ do not belong to a loop)
is greater of equal than some constant $C_\varepsilon=\frac{\varepsilon^2}{\widetilde{C}}>0$
hence the same holds for $\Pi(t)$.
\item If one point is internal and the other is an end--point of $\mathbb{H}^i_t$, 
we consider the following two situations.
If one of the two point $p$ and $q$ is in $\mathbb{S}_t\subset\mathbb{H}^i_t$ and the other is
in the reflection $\mathbb{S}^{R_i}_t$, then, we obtain by construction,
a uniform bound from below on $\Pi(t)$ as in the case in which 
$p$ and $q$ are both boundary points of $\mathbb{H}^i_t$.
Otherwise, if $p$ and $q$ are both in $\mathbb{S}_t$ and one of them coincides with $P^j$ with $j\neq i$,
either the other point coincides with $P^i$ and we have again a uniform bound from below on $\Pi(t)$,
as before, or both $p$ and $q$ are points of $\mathbb{H}_t^j$ both not coinciding with its end--points and 
$E^j(t)=E^i(t)=\Pi(t)<1/4$, so we can apply the argument at the next point.
\item
If $p$ and $q$ are both ``inside'' ${\mathbb{H}}^i_t$,
and if the geodesic curve $\Gamma_{pq}$ contains at most two $3$--points,
by Hamilton's trick (see~\cite{hamilton2} or~\cite[Lemma~2.1.3]{Manlib}), 
we have $\frac{d\Pi(t)}{dt}=\frac{dE^i(t)}{dt}$ and, by Proposition~\ref{lemet2}, 
$\frac{dE^i(t)}{dt}>0$, hence $\frac{d\Pi(t)}{dt}>0$.
If instead the geodesic curve $\Gamma_{p.q}$ contains more than two 3--points, we want to show that there exists a uniform positive constant $\varepsilon$
such that $\vert p-q\vert\geq\varepsilon>0$, which implies a uniform positive estimate from below on $E^i(t)$, as above. This will conclude the proof.\\
Assume by contradiction that such a bound is not possible, then, for a sequence of times $t_j\to T$, the Euclidean distance between the two points $p_j$ and $q_j$ of the associated minimizing pair of $\Phi_{t_j}$ goes to zero, as $j\to\infty$, and this can happen only if $p_i,q_i\to P^i$. It follows, by the maximum principle that the two 3--points $O^1(t)$ and $O^2(t)$ converge to $P^i$ on some sequence of times $t_k\to T$ (possibly different by $t_j$), which is forbidden by Lemma~\ref{trinoncoll} and we are done.
\end{itemize}
\end{proof}

As the quantity $E$  is dilation and translation invariant, the following property holds.

\begin{cor}\label{m12trips}
If $\Omega$ is strictly convex and the evolving network $\SS_t$ has at most two triple junctions, every $C^1\loc$--limit of rescalings of networks of the flow 
is embedded and has multiplicity one.
\end{cor}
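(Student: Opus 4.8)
The plan is to use the scale and translation invariance of the embeddedness measure $E$, together with the uniform lower bound $E(t)\ge C>0$ of Theorem~\ref{dlteo}, and then to rule out both spurious self--intersections and higher multiplicity in the limit by, respectively, a stability argument and a contradiction argument.

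First I would verify the invariance. Each rescaled network $\widetilde{\SS}_{x_0,\tt}$ is, up to a translation, the dilation by $1/\sqrt{2(T-t)}$ of the compact network $\SS_{t(\tt)}$, so $E$ is defined for it just as for any compact network. In the non--loop expression $\Phi=|p-q|^2/A_{p,q}$ both the numerator and $A_{p,q}$ scale like $\lambda^2$ under a dilation of factor $\lambda$, while in the loop expression $A_{p,q}/A$ is scale invariant and $\psi(A_{p,q})=\frac{A}{\pi}\sin(\frac{\pi}{A}A_{p,q})$ again scales like $\lambda^2$; translations change nothing. Hence $\Phi$, and therefore its infimum $E$, is invariant, so that
$$
E(\widetilde{\SS}_{x_0,\tt})=E(\SS_{t(\tt)})\ge C>0
$$
for every $\tt$, by Theorem~\ref{dlteo}.

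Now let $\widetilde{\SS}_{x_0,\tt_j}\to\SS_\infty$ in $C^1\loc$. A transversal self--intersection of $\SS_\infty$ at a point which is not one of its triple junctions is impossible: transversality is stable under $C^1$ perturbations, so such a crossing would already be present in $\widetilde{\SS}_{x_0,\tt_j}$ for $j$ large, contradicting that these networks are embedded. Thus the only way embeddedness or multiplicity one can fail in $\SS_\infty$ is through the tangential collapse of two distinct arcs of $\widetilde{\SS}_{x_0,\tt_j}$ onto a single limit arc. Suppose this happens in some bounded ball $B_R$. Using the $C^1\loc$ convergence I would then select, for $j$ large, an admissible pair $(p_j,q_j)$ on the two almost--coincident sheets inside $B_R$, joined by a short transversal segment, with $|p_j-q_j|\to0$. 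The crucial estimate is that the smallest enclosed area $A_{p_j,q_j}$ is comparable to $|p_j-q_j|$ times the length of the enclosed portion of the network, so that
$$
\Phi(p_j,q_j)\sim\frac{|p_j-q_j|^2}{A_{p_j,q_j}}\longrightarrow 0
$$
(and likewise in the loop regime, since $\psi(A_{p_j,q_j})\ge c\,A_{p_j,q_j}$ for the relevant range). This forces $E(\widetilde{\SS}_{x_0,\tt_j})\to0$, contradicting the uniform bound established above; hence $\SS_\infty$ is embedded with multiplicity one.

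The main obstacle is the geometric construction in the last step: one has to guarantee that the connecting segment $\overline{p_jq_j}$ is genuinely admissible, that is, meets $\widetilde{\SS}_{x_0,\tt_j}$ only at its endpoints, and that the minimizing region in the definition of $A_{p_j,q_j}$ is indeed the thin sliver between the two collapsing sheets, so that the estimate $A_{p_j,q_j}=O(|p_j-q_j|)$ holds and drives $\Phi$ to zero. A secondary technical point is that, under rescaling, a loop may open up and its area $A$ may diverge; one should check that the loop and non--loop forms of $\Phi$ are controlled uniformly, which is harmless since in both regimes the denominator is bounded below by a fixed multiple of $A_{p_j,q_j}$.
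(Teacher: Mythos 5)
Your proposal is correct and follows essentially the same route as the paper: the paper deduces the corollary in a single line from Theorem~\ref{dlteo} together with the dilation and translation invariance of $E$, which is exactly the mechanism you verify and then exploit, your contradiction argument merely making explicit the step the paper leaves implicit (that a non--embedded or multiplicity--two $C^1\loc$--limit would force $\Phi(p_j,q_j)\to 0$ along the rescaled networks). One small wording point: what drives $\Phi(p_j,q_j)\to 0$ is the \emph{lower} bound $A_{p_j,q_j}\gtrsim \vert p_j-q_j\vert\cdot\ell$ with $\ell$ bounded below (so that $A_{p_j,q_j}\gg\vert p_j-q_j\vert^2$), not the upper bound ``$A_{p_j,q_j}=O(\vert p_j-q_j\vert)$'' written at the end of your proposal --- your earlier phrasing ``comparable to $\vert p_j-q_j\vert$ times the length'' is the correct one.
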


In other words, this corollary says that the so--called {\em multiplicity--one conjecture} holds for networks with at most two triple junctions, see the discussion in Section~14 of~\cite{mannovplusch}.

\section{Analysis of singularities}\label{longtime}

In this section we first analyze the possible blow--up at a singular time of the evolution of a network with two triple junctions of general topological type, then we discuss in details 
the specific networks, case by case.

\subsection{Limit of rescaling procedure}
\begin{prop}\label{resclimit}
Let $\SS_t=\bigcup_{i=1}^n\gamma^i([0,1],t)$ be a $C^{2,1}$ curvature flow
of networks with two triple junctions in a smooth, strictly convex, bounded open set $\Omega\subset\R^2$.
Then,
for every $x_0\in\R^2$ and for every subset $\mathcal I$ of  $[-1/2\log
T,+\infty)$ with infinite Lebesgue measure, 
there exists a sequence of rescaled times
$\tt_j\to+\infty$, with $\tt_j\in{\mathcal I}$, such that the sequence
of rescaled networks $\widetilde{\SS}_{x_0,\tt_{j}}$
converges in $C^{1,\alpha}\loc\cap W^{2,2}\loc$, for any $\alpha \in (0,1/2)$,
 to a (possibly empty) limit degenerate regular shrinker $\widetilde\SS_\infty$.
 
Moreover, we have
\begin{equation}\label{gggg2}
\lim_{j\to\infty}\frac{1}{\sqrt{2\pi}}\int_{\widetilde{\SS}_{x_0,\tt_j}}
\widetilde{\rho}\,d\sigma=\frac{1}{\sqrt{2\pi}}
\int_{\widetilde\SS_\infty}\widetilde{\rho}\,d{\sigma}=\widehat{\Theta}(x_0)\,.
\end{equation}
\end{prop}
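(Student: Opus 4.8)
The plan is to run the standard blow--up argument at $x_0$ based on the rescaled monotonicity formula, in the spirit of Huisken~\cite{huisk3}, White~\cite{white1} and~\cite{MMN13}, paying attention to the possible collapse of short curves which forces the limit into the degenerate class of Section~\ref{degsec}. The first step is to extract integrability in time. Integrating the rescaled monotonicity formula~\eqref{reseqmonfor-int} between a fixed $\tt_1$ and $\tt_2$ and letting $\tt_2\to+\infty$, the left--hand side is controlled since $\int_{\widetilde{\SS}_{x_0,\tt}}\widetilde{\rho}\,d\ss$ tends to the finite limit $\widehat{\Theta}(x_0)$ recalled after Definition~\ref{Gaussiandensities}, while the boundary contributions are uniformly bounded and vanish in integral by Lemma~\ref{rescstimadib}. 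This gives
$$
\int_{-\frac12\log T}^{+\infty}\int_{\widetilde{\SS}_{x_0,\tt}}\bigl|\widetilde{\underline{k}}+x^\perp\bigr|^2\widetilde{\rho}\,d\ss\,d\tt<+\infty\,.
$$
As the inner integral is nonnegative and the time integral over the whole half--line is finite, its $\liminf$ along the infinite--measure set $\mathcal I$ must vanish; I can therefore pick $\tt_j\to+\infty$ with $\tt_j\in\mathcal I$ such that $\int_{\widetilde{\SS}_{x_0,\tt_j}}|\widetilde{\underline{k}}+x^\perp|^2\widetilde{\rho}\,d\ss\to0$.

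Next I would establish compactness along $\{\tt_j\}$. The density bound $\int_{\widetilde{\SS}_{x_0,\tt}}\widetilde{\rho}\,d\ss\le C$ controls the length of $\widetilde{\SS}_{x_0,\tt}$ in every ball $B_R$, since $\widetilde{\rho}$ is bounded below there. Writing $\widetilde{k}^2\le 2|\widetilde{\underline{k}}+x^\perp|^2+2|x^\perp|^2$ and using that $|x|^2\widetilde{\rho}$ is bounded, the vanishing term together with the length bound yields a uniform bound on $\int\widetilde{k}^2\,d\ss$ over compact sets along the sequence. Reparametrizing each curve by arclength, these are uniform $W^{2,2}\loc$ bounds, so a further subsequence (not relabeled) converges weakly in $W^{2,2}\loc$ and, by the one--dimensional embedding $W^{2,2}\hookrightarrow C^{1,1/2}$, strongly in $C^{1,\alpha}\loc$ for every $\alpha\in(0,1/2)$. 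Curves whose length stays bounded away from zero produce genuine curves of the limit, while curves whose length tends to zero collapse to points; recording the latter through the associated graph gives a possibly degenerate and possibly empty regular network $\widetilde{\SS}_\infty$, with the $120$ degree conditions inherited from the $C^0$ convergence of the unit tangents (the ``degenerate $120$ degrees condition'' of Section~\ref{degsec}). Passing to the limit on each compact set forces $\widetilde{\underline{k}}+x^\perp=0$ on $\widetilde{\SS}_\infty$, i.e.\ the shrinker equation~\eqref{shrinkeq}, so $\widetilde{\SS}_\infty$ is a degenerate regular shrinker.

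Finally, for the mass identity~\eqref{gggg2} I would rule out loss of mass at infinity: the uniform length bound in $B_R$ combined with the Gaussian decay of $\widetilde{\rho}$ makes the tails $\int_{\widetilde{\SS}_{x_0,\tt_j}\setminus B_R}\widetilde{\rho}$ uniformly small as $R\to\infty$, so the $C^1\loc$ convergence upgrades to convergence of the full Gaussian integrals, giving $\frac{1}{\sqrt{2\pi}}\int_{\widetilde{\SS}_{x_0,\tt_j}}\widetilde{\rho}\to\frac{1}{\sqrt{2\pi}}\int_{\widetilde{\SS}_\infty}\widetilde{\rho}$. By the integrability and monotonicity established at the outset, this common value is exactly $\widehat{\Theta}(x_0)$.

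The main obstacle I expect is the compactness step: securing uniform $W^{2,2}\loc$ control up to and across the triple junctions, and organising the collapse of vanishing curves into a genuine degenerate network (rather than merely a subset of $\R^2$) so that the junction angle conditions survive in the limit. A secondary delicate point is the no--loss--of--mass argument underlying the second equality in~\eqref{gggg2}.
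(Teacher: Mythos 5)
Your skeleton — integrating the rescaled monotonicity formula \eqref{reseqmonfor-int}, controlling the end--point terms via Lemma~\ref{rescstimadib}, picking $\tt_j\in\mathcal{I}$ with $\int_{\widetilde{\SS}_{x_0,\tt_j}}\vert\widetilde{\underline{k}}+x^\perp\vert^2\widetilde{\rho}\,d\ss\to0$ (possible exactly because $\mathcal{I}$ has infinite measure while the double integral is finite), and then extracting a limit that is weak in $W^{2,2}\loc$, strong in $C^{1,\alpha}\loc$, satisfies the shrinker equation, and is recorded as a degenerate regular network when curves collapse — is precisely the argument of the proofs the paper cites (\cite[Proposition~2.19]{MMN13} and \cite[Proposition~8.20]{mannovplusch}; the paper itself only gives these references). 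Your treatment of collapsing curves is glossed but salvageable: along a curve with $L\to0$ and $\int\widetilde{k}^2\,d\ss$ locally bounded, the total turning satisfies $\int\vert\widetilde{k}\vert\,d\ss\le\bigl(\int\widetilde{k}^2\,d\ss\bigr)^{1/2}L^{1/2}\to0$, so the tangent is asymptotically constant and a subsequential limit provides the ``assigned'' unit tangent of the core, whence the degenerate $120$ degrees condition.

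There is, however, a genuine gap in your no--loss--of--mass step, and it is not merely ``delicate'': as written it fails. The only length bound you have established comes from $\widetilde{\rho}\ge e^{-R^2/2}$ on $B_R$, namely $\mathcal{H}^1(\widetilde{\SS}_{x_0,\tt_j}\cap B_R)\le Ce^{R^2/2}$. Pairing this with the Gaussian decay on the annulus $B_{R+1}\setminus B_R$, where $\widetilde{\rho}\le e^{-R^2/2}$, bounds the tail contribution of that annulus only by $Ce^{R+1/2}$, which \emph{diverges} rather than vanishes; summing over annuli is hopeless, so ``uniform length bound plus Gaussian decay'' does not make the tails small, and the first equality in \eqref{gggg2} does not follow from what you proved. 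The missing ingredient is a \emph{linear} length--ratio bound $\mathcal{H}^1(\widetilde{\SS}_{x_0,\tt}\cap B_R)\le CR$, uniform in $\tt$. This is obtained by bounding the Gaussian densities uniformly in the \emph{center}, not just at $x_0$: by the monotonicity formula (with the boundary terms again controlled by Lemma~\ref{rescstimadib}) one has $\Theta_{x,t_0}(t)\le\Theta_{x,t_0}(0)+C$, and for the relevant kernels, whose concentration time satisfies $t_0\ge T/2$ after undoing the rescaling, $\Theta_{x,t_0}(0)$ is at most a constant depending only on $L(\SS_0)$ and $T$; applying this with $t_0=t+r^2$ and using the kernel lower bound $\rho_{x,t+r^2}(\cdot,t)\ge c/r$ on $B_r(x)$ yields $\mathcal{H}^1(\SS_t\cap B_r(x))\le Cr$ at every center and scale, hence the linear bound for the rescaled networks. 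With that, the tails are dominated by $\sum_{r\ge R}C(r+1)e^{-r^2/2}\to0$ uniformly in $j$, the $C^1\loc$ convergence upgrades to convergence of the full Gaussian integrals, and the second equality in \eqref{gggg2} then follows, as you say, from the existence of the limit of $\int_{\widetilde{\SS}_{x_0,\tt}}\widetilde{\rho}\,d\ss$ and its identification with $\sqrt{2\pi}\,\widehat{\Theta}(x_0)$.
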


\begin{proof}
See~\cite[Proposition~2.19]{MMN13} and~\cite[Proposition~8.20]{mannovplusch}.
\end{proof}

An important fact is that all the possible limits $\widetilde\SS_\infty$ are embedded network with multiplicity one, by Corollary~\ref{m12trips} in the previous section.

\begin{prop}\label{possiblelimit}
If the rescaling point $x_0$ belongs to $\Omega$, then the blow--up limit network $\widetilde\SS_\infty$ (if not empty) is one of the following:
\begin{itemize}
\item a straight line through the origin;
\item a standard triod centered at the origin;
\item a Brakke spoon;
\item four halflines from the origin forming angles in pair of $120/60$ degrees;
\item a standard lens;
\item a fish.
\end{itemize}
If the rescaling point $x_0$ is a fixed end--point of the evolving network
(on the  boundary of $\Omega$), then the blow--up limit network $\widetilde\SS_\infty$ (if not empty) is one of the following:
\begin{itemize}
\item a halfline from the origin;
\item two halflines from the origin forming an angle of $120$ degrees.
\end{itemize}
\end{prop}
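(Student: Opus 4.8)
The plan is to read the list essentially off the compactness and classification results already at hand, and then to discard the configurations incompatible with the global structure of the flow. By Proposition~\ref{resclimit}, along a suitable sequence $\tt_j\to+\infty$ the rescaled networks $\widetilde{\SS}_{x_0,\tt_j}$ converge in $C^{1,\alpha}\loc\cap W^{2,2}\loc$ to a (possibly empty) degenerate regular shrinker $\widetilde{\SS}_\infty$, and by Corollary~\ref{m12trips} this limit is embedded and has multiplicity one. Each rescaled network carries the same number of triple junctions as $\SS_t$, namely at most two; since a triple junction is a $120$ degrees angular point, it is preserved under $C^{1,\alpha}\loc$ convergence and cannot be created in the limit, so $\widetilde{\SS}_\infty$ is a complete, embedded, multiplicity one, degenerate regular shrinker with at most two triple junctions. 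I would then split the discussion according to whether $x_0$ lies in $\Omega$ or is a fixed end--point on $\partial\Omega$.

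For an interior point $x_0\in\Omega$ every fixed end--point escapes, since $\widetilde{P}^r(\tt)=(P^r-x_0)/\sqrt{2(T-t(\tt))}\to\infty$, so $\widetilde{\SS}_\infty$ is without end--points. The classification recalled in Section~\ref{shri} (see~\cite{balhausman2} and the references therein) lists all such shrinkers: the straight line, the unit circle, the standard triod, the Brakke spoon, the degenerate union of four halflines with alternating $120/60$ angles, the standard lens and the fish. It remains only to discard the circle. By the topological classification of Figure~\ref{clas}, every loop of a network with two triple junctions contains at least one triple junction, which is a $120$ degrees corner lying \emph{on} the loop; hence if $\widetilde{\SS}_\infty$ contained a bounded circle, the loop producing it would sit at bounded rescaled distance and so would its junction, forcing a $120$ degrees corner to appear on a smooth closed curve, a contradiction. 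Thus a collapsing loop blows up to a Brakke spoon, a standard lens or a fish, never to a bare circle, which leaves exactly the six listed possibilities (the four halflines being the degenerate case in which both junctions collide at $x_0$ and the curve joining them collapses).

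For a fixed end--point $x_0=P^r$ the situation is rigid. The image $\widetilde{P}^r(\tt)$ stays at the origin for every $\tt$, while all other end--points escape. The unique curve of $\SS_t$ ending at $P^r$ is smooth with a definite tangent and zero curvature at $P^r$, so under the rescaling it converges to a halfline issuing from the origin. Since every non--collapsed curve of $\widetilde{\SS}_\infty$ satisfies the origin--centred equation~\eqref{shrinkeq}, any unbounded straight piece must be a halfline \emph{through} the origin; this forbids a triple junction from sitting at a finite nonzero rescaled distance, so in the limit a junction is either at the origin or has escaped. If no junction reaches the origin the limit is a single halfline; if one does, the curve joining it to $P^r$ collapses and the two surviving curves, at $120$ degrees from the collapsed one, yield two halflines from the origin meeting at $120$ degrees. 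Lemma~\ref{trinoncoll} guarantees that the two triple junctions cannot both converge to $P^r$, so no further configuration occurs and we obtain exactly the two listed possibilities.

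The main obstacle I anticipate lies in the exclusion steps rather than in the classification itself. First, for the interior case one must control the rescaled positions $\widetilde{O}^p(\tt)$ of the triple junctions along the chosen sequence, so as to make rigorous the dichotomy ``a junction persists at finite distance or escapes to infinity'' that underlies the removal of the circle; this is where the multiplicity one property of Corollary~\ref{m12trips} and the monotonicity formula~\eqref{reseqmonfor-int} do the essential work. Second, for the boundary case one must rule out a loop collapsing exactly onto the fixed end--point, which I would handle by combining Lemma~\ref{trinoncoll} with the fact that such a collapse would force unbounded curvature whose non--flat blow--up, by the analysis leading to Proposition~\ref{kscoppia}, occurs at an interior point of $\Omega$ rather than at $P^r$.
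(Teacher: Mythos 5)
Your overall strategy coincides with the paper's: invoke Proposition~\ref{resclimit} for subconvergence to a degenerate regular shrinker, Corollary~\ref{m12trips} for embeddedness and multiplicity one, the classification of Section~\ref{shri} for the non--degenerate possibilities, and Lemma~\ref{trinoncoll} together with the fact that there are only two $3$--points to reduce any core to a single collapsed curve, yielding the four--halflines (interior) or two--halflines (boundary) configurations. Your explicit exclusion of the Abresch--Langer circle is actually a point where you are more careful than the paper, which passes over it silently; your argument is essentially sound, but to make it complete you should also address the degenerate possibility that the junction on the loop is absorbed into a core: even then the ``degenerate $120$ degrees condition'' forces the two visible arcs of the loop to meet at $120$ degrees at the image of the core (their exterior tangents must sum to minus the assigned tangent of the collapsed curve), so a corner persists on the would--be circle and the contradiction stands.

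The genuine gap is in your boundary case. The step ``any unbounded straight piece must be a halfline through the origin; this forbids a triple junction from sitting at a finite nonzero rescaled distance'' is a non sequitur: the curve joining the origin to a hypothetical junction at distance $d>0$ is \emph{bounded}, and the two curves leaving that junction need not be straight, so nothing in your premise excludes a limit consisting of a segment from the origin to a junction with two curved shrinker arcs attached. To close this you need two further ingredients. First, a shrinker curve with an end at the origin satisfies $k=-\langle x,\nu\rangle=0$ there, and since~\eqref{shrinkeq} is a second--order ODE determined by position and tangent, uniqueness forces such a curve to be a straight segment on the axis through the origin. Second, to exclude the junction at $d>0$ one can reflect the flow across $P^r$ as in the proof of Proposition~\ref{cross}: the doubled limit would be a complete, embedded, regular shrinker with two triple junctions joined by a straight segment \emph{through} the origin, which is neither the standard lens nor the fish (in both, the junctions are joined by arcs, with the halflines pointing away), contradicting the classification. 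The paper reaches the same conclusion by a shortcut you do not use: for the Brakke spoon, the lens and the fish the center of the homothety lies inside the enclosed region, hence off the network, while a blow--up at a fixed end--point must contain the origin; this disposes of all non--flat, non--degenerate possibilities at boundary points in one stroke, leaving only the halfline (flat case) and the two halflines at $120$ degrees (core case), exactly as in your list.
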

\begin{proof}
The limit (possibly degenerate) network $\widetilde\SS_\infty$ has to satisfy the shrinkers equation $k_\infty+x^\perp=0$
for all $x\in\widetilde\SS_\infty$ (see the proof of Proposition~\ref{resclimit}).

If we assume that $\widetilde\SS_\infty$ is a degenerate regular shrinkers, that is, a core is present, since there are only two 3--points, 
the only possibility is that a single curve (connecting the two triple junctions or a triple junction with an end--point, by Lemma~\ref{trinoncoll}) ``collapses'' in the limit forming 
such a core of $\widetilde\SS_\infty$,  which then must be composed by four halflines from the origin forming 
angles in pair of $120/60$ degrees, if $x_0\in\Omega$, or by 
two halflines from the origin forming an angle of $120$ degrees, when $x_0\in\partial\Omega$.

If $\widetilde\SS_\infty$ is not degenerate and the curvature $k_\infty$ is constantly zero, the network is composed only by 
halflines or straight lines. Then, the possible flat regular shrinkers are either a straight line through the origin or a standard triod, if $x_0\in\Omega$, 
or a halfline, if $x_0\in\partial\Omega$.\\
If instead the curvature is not constantly zero and the network $\widetilde\SS_\infty$ is not degenerate, by the classification of regular shrinkers with two triple junctions that we discussed in Section~\ref{shri}, we can only have either Brakke spoon, or the standard lens or the fish.
In all these three cases the center of the homothety is inside
the enclosed region, hence $x_0$ cannot be an end--point on the boundary of $\Omega$.
\end{proof}

We define the set of reachable points of the flow by
$$
R=\left\lbrace x\in \mathbb{R}^2 \vert\; \text{there exist}\, p_i\in \mathbb{S} \;
\text{and} \,t_i\nearrow T \; \text{such that}
\lim_{i\longrightarrow\infty}F(p_i,t_i)=x \right\rbrace\,. 
$$

Such a set is not empty and compact,
if a point $x_0\notin R$, it means that the flow is definitely far from $x_0$, on the other hand, it can be shown (see~\cite[Lemma~10.4]{mannovplusch}) that if 
$x_0\in R$, for every $t\in [0,T)$ the closed ball of radius $\sqrt{2(T-t)}$ and center $x_0$ 
intersects $\mathbb{S}_t$. Hence, we have a ``dichotomy'' when we consider the blow--up around points of $\overline{\Omega}$:
\begin{itemize}
\item the limit of any sequence of rescaled networks is not empty and we are rescaling around a point in $R$; 
\item the blow--up limit is empty.
\end{itemize}

We remind that,  thanks to Theorem~\ref{curvexplod-general},
if $T<+\infty$ is the maximal time of existence of a smooth flow, then,
at least one of the following two possibilities happens:
\begin{itemize}
\item the length of one (or more) curve of the network goes to zero;
\item the curvature is unbounded as $t\to T$.
\end{itemize}

\subsection{Limit networks with hypothesis on the length of the curves}

We are now ready to analyse the behavior of the flow at a singular time. We start assuming that, as $t\to T$, no curve is collapsing, or more in general, the hypothesis in the following proposition.

\begin{prop}\label{resclength}
Consider a reachable point for the flow $x_0$, the sequence of rescaled networks $\widetilde{\SS}_{x_0,\tt_{j}}$ 
of Proposition~\ref{resclimit} and its
$C^{1}\loc$--limit $\widetilde\SS_\infty$.
If we assume that all the lengths $L^i(t)$ of the curves of the networks satisfy
\begin{equation}\label{Lbasso}
\lim_{t\to T}\frac{L^i(t)}{\sqrt{T-t}}=+\infty\,.
\end{equation}
If the rescaling point belongs to $\Omega$, then $\widetilde\SS_\infty$ is one of the following:
\begin{itemize}
\item a straight line through the origin;
\item a standard triod centered at the origin.
\end{itemize}
If the rescaling point is a fixed end--point of the evolving network
(on the  boundary of $\Omega$), then $\widetilde\SS_\infty$ is:
\begin{itemize}
\item a halfline from the origin.
\end{itemize}
\end{prop}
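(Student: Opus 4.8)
The plan is to start from the complete list of admissible blow--up limits furnished by Proposition~\ref{possiblelimit} and to show that the length hypothesis~\eqref{Lbasso} rules out every item on that list except the flat, non--degenerate ones, namely a straight line or a standard triod when $x_0\in\Omega$, and a halfline when $x_0\in\partial\Omega$. The starting observation is that~\eqref{Lbasso} is exactly the rescaled statement that every curve becomes infinitely long in the limit: writing $\widetilde{L}^i(\tt)$ for the length of the $i$--th curve of the rescaled network $\widetilde{\SS}_{x_0,\tt}$, one has $\widetilde{L}^i(\tt)=L^i(t)/\sqrt{2(T-t)}$, so that~\eqref{Lbasso} is equivalent to $\widetilde{L}^i(\tt)\to+\infty$ as $\tt\to+\infty$, for every $i$.

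First I would exclude the \emph{degenerate} limits, that is, the four halflines meeting in pairs at $120/60$ degrees (when $x_0\in\Omega$) and the two halflines at $120$ degrees (when $x_0\in\partial\Omega$). By the proof of Proposition~\ref{possiblelimit} these occur precisely when a single curve collapses to form the core of $\widetilde{\SS}_\infty$ at the origin. A collapsing curve is one whose rescaled length tends to zero along the sequence $\tt_j$, which directly contradicts $\widetilde{L}^i(\tt_j)\to+\infty$. Hence no core can form and the limit $\widetilde{\SS}_\infty$ is non--degenerate.

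It then remains to discard the non--flat, non--degenerate shrinkers, namely the Brakke spoon, the standard lens and the fish. Each of these contains a loop bounded by one curve (the spoon) or by two curves (the lens and the fish), all of finite length, while its remaining curves are unbounded halflines. Since the rescaled networks converge to $\widetilde{\SS}_\infty$ in $C^{1,\alpha}\loc\cap W^{2,2}\loc$ and the limit is embedded with multiplicity one by Corollary~\ref{m12trips}, each curve $\gamma^i$ of $\SS_t$, after rescaling, converges to a single connected piece of $\widetilde{\SS}_\infty$; that piece is either an unbounded halfline, in which case $\widetilde{L}^i(\tt_j)\to+\infty$ consistently, or one of the bounded loop--curves, in which case the entire rescaled curve lies in a fixed ball and $\widetilde{L}^i(\tt_j)$ converges to the finite length of that loop--curve. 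The latter contradicts~\eqref{Lbasso}, so $\widetilde{\SS}_\infty$ cannot contain any bounded loop and the spoon, the lens and the fish are excluded. The only surviving possibilities are then exactly the flat ones listed in the statement.

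The main obstacle is the bookkeeping in the last step: one must verify rigorously that each curve of the network converges, as a whole, to a single connected component of the limit, so that no length can ``escape to infinity'' while a bounded loop still appears in $\widetilde{\SS}_\infty$. This is where the embeddedness and multiplicity--one property (Corollary~\ref{m12trips}), together with the local $C^1$ convergence of Proposition~\ref{resclimit}, are essential: multiplicity one prevents two distinct arcs from overlapping in the limit, and the topology of the network, which is fixed along the flow, forces a loop--curve to converge with its endpoints at the triple junctions of $\widetilde{\SS}_\infty$, hence on a fixed compact set. As a cross--check, the identity~\eqref{gggg2} relating the limit Gaussian integral to $\widehat{\Theta}(x_0)$ can be used to confirm that the densities of the excluded shrinkers are incompatible with the flat limit one is finally left with.
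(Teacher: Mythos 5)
Your proof is correct in substance, but be aware that the paper does not actually prove this proposition in-text: it defers entirely to~\cite[Proposition~8.28]{mannovplusch}, and the route taken there is different from yours. You argue by elimination from the full list of Proposition~\ref{possiblelimit}, which rests on the nontrivial classification of non--flat regular shrinkers (Brakke spoon, standard lens, fish, and the non--existence of the theta). The cited proof instead avoids that classification: since by~\eqref{Lbasso} no rescaled length tends to zero, the limit has no core, and since no curve of the limit can have finite length, every curve of $\widetilde\SS_\infty$ is an unbounded shrinker curve; by the equation $k_\ss=\widetilde{k}\langle\gamma\,\vert\,\tau\rangle$ (equivalently, the Abresch--Langer analysis) an embedded shrinker curve of infinite length is a line or a halfline through the origin, and the $120$--degree and two--junction constraints then leave exactly the line, the standard triod, and (at a boundary point) the halfline. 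What each approach buys: yours is shorter \emph{given} Proposition~\ref{possiblelimit}, which this paper has already established; the cited route is self--contained at this step and extends to networks with more junctions, where no classification of non--flat shrinkers is available.

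Both routes hinge on the same bookkeeping step, which you correctly identify as the main obstacle but only sketch: local $C^{1,\alpha}$ convergence gives lower, not upper, semicontinuity of length, so the endpoints of a loop--curve converging to the limit junctions do \emph{not} by themselves confine the whole rescaled curve to a compact set — your phrase ``hence on a fixed compact set'' slightly conflates this. The ingredient that closes it is the one you name, multiplicity one: if the loop--curve made an excursion past a radius $R$ chosen so that $\widetilde\SS_\infty$ in the annulus $B_{3R}\setminus B_{2R}$ consists only of straight halflines, its crossing arcs (with uniform local $W^{2,2}$ curvature bounds) would subconverge onto those halflines on top of the arcs coming from the unbounded curves of the network, giving multiplicity at least two there and contradicting Corollary~\ref{m12trips}. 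In the literature this is precisely the local structure result~\cite[Lemma~8.18 and Remark~8.21]{mannovplusch}, which the present paper itself invokes in the proof of Proposition~\ref{cross}; citing it would make your third paragraph rigorous. Your final cross--check via~\eqref{gggg2} is sound — it shows all blow--up limits at $x_0$ share the same Gaussian density — but it is not load--bearing and can be omitted.
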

\begin{proof}
See~\cite[Proposition~8.28]{mannovplusch}. 
\end{proof}

Then, the following theorem implies that the curvature must be bounded.

\begin{teo}\label{regularity}
Let $\mathbb{S}_t$ be a smooth flow in the maximal time interval $[0,T)$ for the initial network $\mathbb{S}_0$.
Let $x_0$ be a reachable point for the flow such that 
the $C^{1}\loc$--limit $\widetilde{\SS}_\infty$ of
the sequence of rescaled networks $\widetilde{\SS}_{x_0,\tt_{j}}$ is:
\begin{itemize}
\item a straight line trough the origin;
\item a halfline from the origin;
\item a standard triod.
\end{itemize}
Then the curvature of the evolving network is uniformly bounded for $t\in[0,T)$ in a ball around the point $x_0$.
\end{teo}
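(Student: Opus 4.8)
The plan is to identify the Gaussian density $\widehat{\Theta}(x_0)$ with the (minimal) density of the flat model and then to invoke a local regularity theorem of White type. First I would compute the density of each admissible flat limit directly from the weighted length integral: writing $\widetilde{\rho}(x)=e^{-|x|^2/2}$ and using $\frac{1}{\sqrt{2\pi}}\int_0^{+\infty}e^{-r^2/2}\,dr=\frac12$, one finds that a straight line through the origin has density $1$, a single halfline has density $\frac12$, and the standard triod (three halflines at $120$ degrees) has density $\frac32$. By the convergence statement of Proposition~\ref{resclimit} and the identity~\eqref{gggg2}, $\widehat{\Theta}(x_0)$ equals this value $\theta_{\mathrm{flat}}$ in each of the three cases.

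Second, I would upgrade this infinitesimal information to a uniform bound on the Gaussian density ratios in a full parabolic neighborhood of $(x_0,T)$, which is the input a White-type theorem requires. The monotonicity formula (in the rescaled form stated above) shows that, up to the boundary contributions coming from the fixed end--points, the Gaussian density $\Theta_{x_0,t_0}(t)$ is monotone in the scale, and those boundary terms are uniformly small by Lemma~\ref{rescstimadib}; hence the density at a small fixed scale $\rho_0$ controls the density at all smaller scales. Since $\Theta_{x_0,T}(T-\rho^2)\to\widehat{\Theta}(x_0)=\theta_{\mathrm{flat}}$ as $\rho\to0$, one may fix $\rho_0$ with $\Theta_{x_0,T}(T-\rho_0^2)\le\theta_{\mathrm{flat}}+\varepsilon/2$, and then use continuity of the Gaussian density in the base point $(x,t_0)$ to conclude $\Theta_{x,t_0}(t)\le\theta_{\mathrm{flat}}+\varepsilon$ for every $(x,t_0)$ near $(x_0,T)$ and every scale below $\rho_0$.

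Third, I would feed this density bound into the local regularity theorem. For the line and halfline cases the model is smooth, so I apply White's theorem~\cite{white1} directly with $\varepsilon$ the universal constant of that theorem; in the halfline case I would first reflect across the end--point as in Section~\ref{dsuL} to remove the boundary and reduce to a full line of density $1$. Density within $\varepsilon$ of the flat value then forces $|k|\le C/\rho_0$ in a smaller parabolic ball, and since the flow is smooth on $[0,T-\delta]$ this gives a bound uniform in $t\in[0,T)$ in a ball around $x_0$. For the triod case the model carries a multiplicity--one triple junction, so I would instead invoke the network adaptation of White's theorem from~\cite{Ilnevsch,MMN13}, whose threshold is calibrated to the value $\frac32$.

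The hard part will be the triod case, precisely because the local regularity must hold up to the triple junction rather than for a smooth sheet. What makes it work is a density gap: by the classification of shrinkers in Section~\ref{shri} together with Proposition~\ref{possiblelimit}, every non--flat or degenerate configuration that could arise as a further blow--up (Brakke spoon, standard lens, fish, and the $120/60$ four--halfline cross, of density $2$) has Gaussian density strictly larger than $\frac32$. Thus closeness of the density to $\frac32$ genuinely excludes any extra sheet or junction forming near $x_0$, and this separation is exactly what lets the network version of White's theorem certify bounded curvature; verifying the gap and that the junction regularity theorem applies is the crux of the argument.
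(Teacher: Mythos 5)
Your proposal is correct and follows essentially the same route as the paper, whose entire proof of Theorem~\ref{regularity} consists in invoking White's local regularity theorem~\cite{white1} for the line and the halfline (the latter after reflecting across the fixed end--point, exactly as you do) and its network adaptation from~\cite{Ilnevsch,MMN13,mannovplusch} for the standard triod; your first two steps simply unpack the standard density--identification and almost--monotonicity argument underlying those citations. One remark: the ``density gap'' you flag as the crux of the triod case is not actually needed, since the uniform bound $\Theta_{x,t_0}(t)\le\frac32+\varepsilon$ that you already derived from the monotonicity formula together with Lemma~\ref{rescstimadib} is precisely the hypothesis of the $\varepsilon$--regularity theorem for networks in~\cite{Ilnevsch,MMN13}, so no classification of competing shrinkers enters the proof.
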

\begin{proof}
By means of White's local regularity theorem in~\cite{white1}, if $\widetilde{\SS}_\infty$ is a straight line or a halfline from the origin (when the blow--up is around a point $x_0\in\partial\Omega$), the curvature is bounded and the same happens if $\widetilde{\SS}_\infty$ is a standard triod centered at the origin (see~\cite{Ilnevsch,MMN13,mannovplusch}). 
\end{proof}

\begin{rem}\label{regrem} Notice that this theorem holds also without the hypothesis of boundedness of the curvature.
\end{rem}

If then we suppose that the lengths of all the curves of the network with two triple junctions stay 
bounded away from zero by a uniform constant $L>0$, during the smooth evolution of $\mathbb{S}_t$
in a maximal time interval of existence $[0,T)$ with $T<+\infty$, by means of Proposition~\ref{resclength} and Theorem~\ref{regularity}, we have a contradiction with Theorem~\ref{curvexplod-general}. Hence, we have the following conclusion.

\begin{prop}\label{global}
Consider a smooth evolution $\mathbb{S}_t$ of a network with two triple junctions
in its maximal time interval of existence $[0,T)$.
If we suppose that the length of all the curves of the network stays
bounded away from zero by a constant $L>0$ during the evolution,
then $T=+\infty$, in other words, the evolution is global in time.
\end{prop}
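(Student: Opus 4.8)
The plan is to argue by contradiction: assume that the maximal time $T$ is finite and show that the curvature must stay bounded up to $T$, in direct conflict with Theorem~\ref{curvexplod-general}. First I would use the length hypothesis to localize the dichotomy. Since every curve satisfies $L^i(t)\ge L>0$, the first alternative of Theorem~\ref{curvexplod-general} (the inferior limit of some length going to zero) is excluded, so the second alternative must hold, namely $\int_{\SS_t}k^2\,ds\to+\infty$ as $t\to T$. Everything after this step is aimed at contradicting precisely this blow--up.

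Next I would verify that the hypothesis of Proposition~\ref{resclength} is automatically satisfied. Indeed, since $L^i(t)\ge L>0$ for all $t$ and $T<+\infty$, one has
$$
\frac{L^i(t)}{\sqrt{T-t}}\ge\frac{L}{\sqrt{T-t}}\longrightarrow+\infty\qquad\text{as }t\to T,
$$
so condition~\eqref{Lbasso} holds for every curve of the network. Then, for any reachable point $x_0\in R$, Proposition~\ref{resclength} forces the blow--up limit $\widetilde\SS_\infty$ to be one of the flat shrinkers: a straight line or a standard triod when $x_0\in\Omega$, and a halfline when $x_0\in\partial\Omega$. By Theorem~\ref{regularity}, in each of these cases the curvature of the flow is uniformly bounded for $t\in[0,T)$ in some ball $B_{r(x_0)}(x_0)$ centred at $x_0$.

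It remains to upgrade these pointwise, scale--dependent bounds to a single global one. The set $R$ of reachable points is compact and the balls $\{B_{r(x_0)}(x_0)\}_{x_0\in R}$ cover it; extracting a finite subcover, I obtain an open neighbourhood $U$ of $R$ on which the curvature is uniformly bounded, say $|k|\le M$, $M$ being the maximum of finitely many constants. Since every point of $\overline\Omega$ outside $R$ is eventually avoided by the flow, there is a time $t_0<T$ with $\SS_t\subset U$ for all $t\in[t_0,T)$. Recalling that the total length is non--increasing by~\eqref{evolL}, so $L(t)\le L(0)$, this gives $\int_{\SS_t}k^2\,ds\le M^2L(0)$ on $[t_0,T)$; combined with the boundedness of the curvature on the compact interval $[0,t_0]$ (where the flow is smooth), we get a uniform bound on $\int_{\SS_t}k^2\,ds$ for all $t\in[0,T)$.

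This contradicts the blow--up established in the first step, hence $T=+\infty$ and the evolution is global. The only genuinely delicate point in this scheme is the passage from the local regularity of Theorem~\ref{regularity} to one global curvature estimate: one must know that the finitely many balls arising from White's local regularity theorem actually cover the eventual position of the network. This is exactly what the compactness of $R$ together with the concentration of the flow near $R$ as $t\to T$ guarantees, and it is the step I would treat most carefully.
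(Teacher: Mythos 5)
Your proposal is correct and follows exactly the paper's route: the length hypothesis together with $T<+\infty$ verifies condition~\eqref{Lbasso}, so Proposition~\ref{resclength} forces flat blow--up limits, Theorem~\ref{regularity} then bounds the curvature, contradicting Theorem~\ref{curvexplod-general}. The only difference is that you spell out the globalization of the local curvature bounds via compactness of the reachable set $R$, a step the paper leaves implicit, and your treatment of it is sound.
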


Therefore, if the maximal time of existence is $T$ is finite, the inferior limit 
of the length of at least one curve, as $t\to T$,  must be zero. Then, we separate the analysis in two cases (see also the beginning of Section~10 in~\cite{mannovplusch}):
\begin{itemize}
\item the curvature is uniformly bounded along the flow and the length of at least one curve of the network goes to zero, when $t\to T$ (Proposition~\ref{ppp1} implies that if the curvature is bounded the length of every curve has a limit, as $t\to T$);
\item the curvature is unbounded and the length of at least one curve of the network is not positively bounded away from zero, as $t\to T$.
\end{itemize}

\subsection{Limit networks with vanishing curves and with bounded curvature}

Assume that the length of at least one curve goes to zero, as $t\to T$, while the curvature of the evolving network is uniformly bounded.
 
\begin{prop}\label{collapse}
If $\SS_t=\bigcup_{i=1}^n\gamma^i([0,1],t)$ is the curvature flow of a regular network with two triple junctions and with fixed end--points, in a
maximal time interval $[0,T)$. If the curvature is uniformly bounded along the flow, then, the networks $\SS_t$ converge in $C^1\loc$,
as $t\to T$, to a degenerate regular network $\SS_T=\bigcup_{i=1}^n\widetilde{\gamma}^i_T([0,1])$.\\
Moreover, every vertex of $\SS_T$ is either a regular triple junction, an end--point of $\SS_t$, or 
\begin{itemize}
\item a $4$--point where the four concurring curves have opposite unit tangents in pairs and form 
angles of $120/60$ degrees between them
(collapse of the curve joining the two triple junctions of $\SS_t$);
\item a $2$--point at an end--point of the network $\SS_t$ where the
  two concurring curves form an angle of $120$
degrees among them (collapse of the curve joining a triple junction to such end--point of $\SS_t$).
\end{itemize}
\end{prop}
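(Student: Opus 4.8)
The plan is to combine the uniform curvature bound with the smoothness of the flow for $t<T$ to extract a $C^{1}\loc$ subconvergence, and then to promote this to genuine convergence by showing the limit is independent of the chosen sequence of times. First I would observe that since $\int_{\SS_t}k^2\,ds$ stays bounded and the total length $L(t)$ is nonincreasing (by \eqref{evolL}) and hence has a limit as $t\to T$, the individual lengths $L^i(t)$ all converge; some of them converge to zero (the collapsing curves), the others to strictly positive limits. Using the evolution equation $\gamma^i_t=k^i\nu^i+\lambda^i\tau^i$ and the uniform bound on $k$, I would estimate the displacement of each point: for the non-collapsing curves, after a suitable (time-dependent) tangential reparametrization that normalizes the tangential velocity $\lambda^i$, the curves have uniformly bounded velocity in the normal direction, and the uniform $L^2$-bound on curvature together with $\int_t^T\!\int_{\SS_s}k^2\,ds\,ds<\infty$ gives Cauchy-in-time control, so $\gamma^i(\cdot,t)$ converges in $C^1$ to a limit regular $C^1$ curve $\widetilde\gamma^i_T$. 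For the collapsing curves, the images shrink to a point but, keeping the graph structure of Definition (degenerate regular network), the limit is recorded as a degenerate (constant) curve with an assigned unit tangent vector equal to the $C^0$-limit of $\tau^i(\cdot,t)$, which exists because Theorem~\ref{c2shorttime} guarantees the unit tangents are continuous up to the boundary and the uniform curvature bound makes $\tau^i$ equicontinuous in space.

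The second main step is to identify the vertex structure of the limit $\SS_T$. Away from collapses the junctions persist as regular triple junctions, since the $120$ degrees condition \eqref{problema} holds for every $t<T$ and passes to the $C^1$-limit of the tangents. At a point where a single curve collapses, I would use Lemma~\ref{trinoncoll}, which forbids both triple junctions from running into the same end--point, to conclude that only two geometric configurations are possible: either the curve joining the two $3$-points collapses, merging $O^1$ and $O^2$ into a single point where four curves concur, or a curve joining a $3$-point to an end--point collapses, merging them into a $2$-point at the fixed end--point. In the first case, passing the degenerate $120$ degrees condition to the limit: at the collapsed core the two (now coincident) triple junctions contribute their exterior unit tangents, and the assigned tangent of the collapsed curve appears with opposite signs at its two endpoints; summing the two Herring relations and cancelling the $\pm\tau$ of the collapsed edge forces the four surviving tangents to form alternate angles of $120/60$ degrees and to be opposite in pairs. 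In the second case the same limiting argument at the end--point, using that the collapsed curve's assigned tangent cancels, yields the two remaining curves meeting at $120$ degrees.

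The hard part, I expect, is the uniform-in-time $C^1$ estimate that upgrades subsequential convergence to a well-defined limit network $\SS_T$, rather than merely a limit along some sequence $t_j\to T$. The bound on $\int k^2$ controls the $W^{2,2}$-norm but does not by itself give equicontinuity of the tangent up to the collapsing junctions, where the geometry degenerates; one must control how fast the short curves shrink and verify that the tangential velocities $\lambda^i$, determined by the junction conditions, do not blow up. I would handle this by the interpolation/integral-estimate machinery already used in~\cite{mannovplusch} (the same that underlies Theorem~\ref{regularity} and Proposition~\ref{resclength}), deriving that $\partial_t \tau^i$ is integrable in time in $L^2$ uniformly, which yields a Cauchy estimate in $t$ and hence a genuine limit; this is precisely the technical content one cites from~\cite[Proposition~10.?]{mannovplusch}. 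Once the limit exists and is seen to be a $C^1$ regular degenerate network, the classification of its vertices above completes the proof.
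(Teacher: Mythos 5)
Your overall outline is sound and close in spirit to the argument the paper delegates to \cite[Propositions~10.11 and~10.14]{mannovplusch}: bounded curvature controls the tangential velocities $\lambda^i$ at the junctions, the evolution of $\int_{\SS_t}k^2\,ds$ yields an $L^2$--in--time bound on $k_s$, hence $\partial_t\tau=(k_s+k\lambda)\nu$ is square--integrable in time and a Cauchy argument gives a genuine (not merely subsequential) $C^1\loc$--limit; the vertex angles then follow by passing the Herring relations to the limit and cancelling the assigned tangent of the collapsed edge, exactly as you compute. (Two small inaccuracies: Theorem~\ref{c2shorttime} gives continuity of the tangents at $t=0$, not at $t=T$, so it cannot be cited for the existence of the limit tangent of a collapsing curve; and it is the time--integral of $\int k_s^2\,ds$, not of $\int k^2\,ds$, that drives the Cauchy estimate --- you implicitly fix this in your last paragraph.)

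The genuine gap is that your classification step never invokes embeddedness and multiplicity one, i.e.\ Theorem~\ref{dlteo} and Corollary~\ref{m12trips}, which the paper explicitly flags (in the remark and figure following the proposition) as \emph{necessary} to restrict the admissible cores to a single collapsed curve. Lemma~\ref{trinoncoll} plus topological counting does not by itself yield your dichotomy ``either the middle curve collapses or a boundary curve collapses''. With only a uniform curvature bound, nothing you use forbids two distinct curves from converging onto each other without any length going to zero --- for instance the two curves of a loop squeezing together so that the enclosed area tends to zero while both lengths stay bounded below (Proposition~\ref{loop} does not apply, since it assumes $L(t)\to0$), producing a multiplicity--two arc in the limit; nor does it forbid a tangential self--contact of the limit network away from any collapsing curve. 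Both scenarios are compatible with bounded curvature, with the $120$ degrees conditions, and with Lemma~\ref{trinoncoll}, yet they produce limit vertices or cores outside the list in the statement. It is precisely the uniform lower bound $E(t)\geq C>0$ on the embeddedness measure of Section~\ref{dsuL} that excludes them (a thin strip of width $w$ over a region of area $A\approx\ell_0 w$ forces $w^2\geq C\psi(A_{p,q})\approx C\ell_0 w/\pi$, hence $w\geq C\ell_0/\pi$, contradicting $w\to0$). Your proof needs this ingredient stated and used before the case analysis; with it inserted, the rest of your plan goes through and agrees with the cited proof.
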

\begin{proof}
See~\cite[Proposition~10.11]{mannovplusch} and~\cite[Proposition~10.14]{mannovplusch}. 
\end{proof}

\begin{rem}
We underline that to obtain the previous characterization of 
the degenerate regular network  $\SS_T$ it is necessary that the multiplicity--one conjecture holds.
Indeed, this restricts the admissible cores to a single curve and excludes all other more complex situations, see the following figure.
\end{rem}
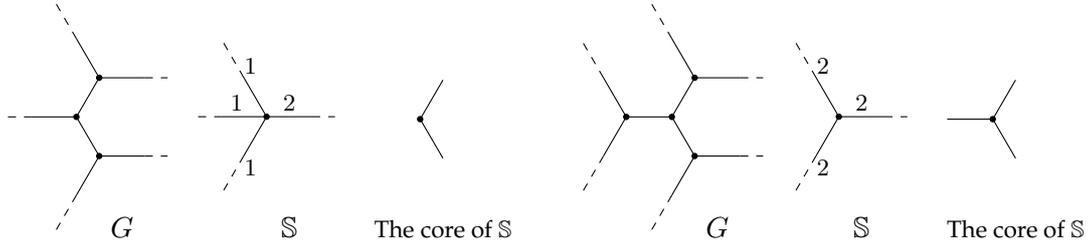
\begin{figure}[H]
\begin{center}
\begin{tikzpicture}[scale=0.30]
\draw[color=black]
(-1,0)to[out= 0,in=180, looseness=1](1,0)
(2,1.73)to[out= 0,in=180, looseness=1](4,1.73)
(2,-1.73)to[out= 0,in=180, looseness=1](4,-1.73)
(2,1.73)to[out= 120,in=-60, looseness=1] (1,3.46)
(2,-1.73)to[out= -120,in=60, looseness=1] (1,-3.46)
(1,0)to[out= 60,in=-120, looseness=1] (2,1.73)
(1,0)to[out=-60,in=120, looseness=1] (2,-1.73);
\draw[color=black,dashed]
(-2,0)to[out= 0,in=180, looseness=1](-1,0)
(4,1.73)to[out= 0,in=180, looseness=1](5,1.73)
(4,-1.73)to[out= 0,in=180, looseness=1](5,-1.73)
(1,3.46)to[out= 120,in=-60, looseness=1] (0,5.19)
(1,-3.46)to[out= -120,in=60, looseness=1] (0,-5.19);
\fill(1,0) circle (4pt);
\fill(2,1.73) circle (4pt);
\fill(2,-1.73) circle (4pt);
\path[font=](3,-4) node[below] {$G$};
\end{tikzpicture}\quad
\begin{tikzpicture}[scale=0.30]
\draw[color=black]
(0,1.73)to[out= 0,in=180, looseness=1](2,1.73)
(2,1.73)to[out= 0,in=180, looseness=1](4,1.73)
(2,1.73)to[out= 120,in=-60, looseness=1] (1,3.46)
(2,1.73)to[out= -120,in=60, looseness=1] (1,0);
\draw[color=black,dashed]
(-1,1.73)to[out= 0,in=180, looseness=1](0,1.73)
(4,1.73)to[out= 0,in=180, looseness=1](5,1.73)
(1,3.46)to[out= 120,in=-60, looseness=1] (0,5.19)
(1,0)to[out= -120,in=60, looseness=1] (0,-1.73);
\fill(2,1.73) circle (4pt);
\path[font=\footnotesize](0.7,1.53) node[above] {$1$};
\path[font=\footnotesize](3,1.53) node[above] {$2$};
\path[font=\footnotesize](1.3,4.76) node[below] {$1$};
\path[font=\footnotesize](1.3,-1.3) node[above] {$1$};
\path[font=](3,-2.27) node[below] {$\mathbb{S}$};
\end{tikzpicture}\quad
\begin{tikzpicture}[scale=0.30]
\draw[color=black]
(1,0)to[out= 60,in=-120, looseness=1] (2,1.73)
(1,0)to[out=-60,in=120, looseness=1] (2,-1.73);
\fill(1,0) circle (4pt);
\path[font=\footnotesize]
(2,-4) node[below] {The core of $\mathbb{S}$};
\end{tikzpicture}\qquad
\begin{tikzpicture}[scale=0.30]
\draw[color=black]
(-1,0)to[out= 120,in=-60, looseness=1] (-2,1.73)
(-1,0)to[out= -120,in=60, looseness=1] (-2,-1.73)
(-1,0)to[out= 0,in=180, looseness=1](1,0)
(2,1.73)to[out= 0,in=180, looseness=1](4,1.73)
(2,-1.73)to[out= 0,in=180, looseness=1](4,-1.73)
(2,1.73)to[out= 120,in=-60, looseness=1] (1,3.46)
(2,-1.73)to[out= -120,in=60, looseness=1] (1,-3.46)
(1,0)to[out= 60,in=-120, looseness=1] (2,1.73)
(1,0)to[out=-60,in=120, looseness=1] (2,-1.73);
\draw[color=black,dashed]
 (-2,1.73)to[out= 120,in=-60, looseness=1](-3,3.46)
 (-2,-1.73)to[out= -120,in=60, looseness=1](-3,-3.46)
(4,1.73)to[out= 0,in=180, looseness=1](5,1.73)
(4,-1.73)to[out= 0,in=180, looseness=1](5,-1.73)
(1,3.46)to[out= 120,in=-60, looseness=1] (0,5.19)
(1,-3.46)to[out= -120,in=60, looseness=1] (0,-5.19);
\fill(-1,0)circle (4pt);
\fill(1,0) circle (4pt);
\fill(2,1.73) circle (4pt);
\fill(2,-1.73) circle (4pt);
\path[font=](3,-4) node[below] {$G$};
\end{tikzpicture}\quad
\begin{tikzpicture}[scale=0.30]
\draw[color=black]
(2,1.73)to[out= 0,in=180, looseness=1](4,1.73)
(2,1.73)to[out= 120,in=-60, looseness=1] (1,3.46)
(2,1.73)to[out= -120,in=60, looseness=1] (1,0);
\draw[color=black,dashed]
(4,1.73)to[out= 0,in=180, looseness=1](5,1.73)
(1,3.46)to[out= 120,in=-60, looseness=1] (0,5.19)
(1,0)to[out= -120,in=60, looseness=1] (0,-1.73);
\fill(2,1.73) circle (4pt);
\path[font=](3,-2.27) node[below] {$\mathbb{S}$};
\path[font=\footnotesize](3,1.53) node[above] {$2$};
\path[font=\footnotesize](1.3,4.76) node[below] {$2$};
\path[font=\footnotesize](1.3,-1.3) node[above] {$2$};
\end{tikzpicture}\quad
\begin{tikzpicture}[scale=0.30]
\draw[color=black]
(1,0)to[out= 60,in=-120, looseness=1] (2,1.73)
(1,0)to[out=-60,in=120, looseness=1] (2,-1.73)
(1,0)to[out=180,in=0, looseness=1] (-1,0);
\path[font=\footnotesize]
(2,-4) node[below] {The core of $\SS$};
\fill(1,0) circle (4pt);
\end{tikzpicture}\qquad
\end{center}
\caption{Two examples of limit degenerate regular networks if the multiplicity--one conjecture does not hold.}
\end{figure}

\subsection{Limit networks with vanishing curves without bounded curvature}

This situation is the most delicate, we start excluding the possibility that the curvature is unbounded and the blow--up limit $\widetilde{\SS}_\infty$ has zero curvature (such limit must be among the ones listed in Proposition~\ref{possiblelimit})

By Theorem~\ref{regularity}, which holds without the hypothesis of boundedness of the curvature, if $\widetilde{\SS}_\infty$ is a straight line or a halfline from the origin (when the blow--up is around a point $x_0\in\partial\Omega$), the curvature is bounded and the same happens if $\widetilde{\SS}_\infty$ is a standard triod centered at the origin. We now exclude also the case that $\widetilde{\SS}_\infty$ is composed of four halflines from the origin forming angles in pair of $120/60$ degrees 
or two halflines from the origin forming an angle of $120$ degrees (when $x_0\in\partial\Omega$).

\begin{prop}\label{cross}
Let $\mathbb{S}_t$ be a smooth flow in the maximal time interval $[0,T)$ 
for the initial network with two triple junctions $\mathbb{S}_0$.
Let $x_0$ be a reachable point for the flow 
and $t_j\to T$ a sequence of times such that the associate rescaled networks $\widetilde{\SS}_{x_0,\tt_{j}}$
(as in Proposition~\ref{resclimit}) converge, as $j\to\infty$,
in $C^{1,\alpha}\loc\cap W^{2,2}\loc$, for any $\alpha\in (0,1/2)$ to:
\begin{itemize}
\item a limit degenerate regular shrinker $\widetilde{\SS}_\infty$ 
composed by four concurring halflines with opposite unit tangent vectors in pairs, forming 
angles of $120/60$ degrees between them ($x_0\in\Omega$);
\item a limit degenerate regular shrinker $\widetilde{\SS}_\infty$ 
composed by two concurring halflines forming an angle of $120$
degrees between them ($x_0\in\partial\Omega$).
\end{itemize}
Then, 
$$
\vert k(x,t)\vert \leq C <+\infty\,,
$$
for all $x$ in a neighborhood of $x_0$ and $t\in[0,T)$.
\end{prop}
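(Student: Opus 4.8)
The plan is to reduce both alternatives to a single model --- a transversal crossing of two straight lines through the origin --- and then to combine White's local regularity theorem \cite{white1}, used away from the crossing, with the local regularity at a transversal crossing of Ilmanen--Neves--Schulze \cite{Ilnevsch}, used at the crossing itself. First I would dispose of the boundary case $x_0\in\partial\Omega$ by passing to the doubled flow $\mathbb{H}^i_t$ obtained by reflecting $\SS_t$ across $x_0$, exactly as in Section~\ref{dsuL}: after reflection the two halflines forming a $120$ degree angle become four halflines with opposite unit tangents in pairs, i.e. the interior model. Thus it suffices to treat $x_0\in\Omega$, where the limit $\widetilde{\SS}_\infty$ consists of four halflines from the origin with opposite tangents in pairs. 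Since ``opposite tangents in pairs'' means the four halflines assemble into two full straight lines, $\widetilde{\SS}_\infty$ is precisely two lines crossing transversally at the origin, each of multiplicity one by Corollary~\ref{m12trips} --- whence the name of the proposition.

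Away from the crossing point the argument is the same as in Theorem~\ref{regularity}. On any compact set not containing the origin the limit $\widetilde{\SS}_\infty$ is a union of disjoint flat pieces of the two lines, so the rescaled Gaussian density ratios are as close to $1$ as desired on a backward parabolic neighborhood of each such point, and White's local regularity theorem \cite{white1} yields a uniform curvature bound there; possible triple junctions of $\SS_t$ approaching $x_0$ whose blow--up is a flat triod are handled as in \cite{MMN13,Ilnevsch}. Hence the only place where the curvature could concentrate is the crossing point $x_0$ itself, where the Gaussian density equals $2$ and White's theorem does not apply. The main obstacle is exactly this: to reach the hypotheses of a regularity statement at a density--$2$ point I would first use the rescaled monotonicity formula of Section~\ref{rescaling}, together with the upper semicontinuity of $\widehat{\Theta}$, to show $\widehat{\Theta}(x_0)=2$, and then upgrade the subsequential convergence $\widetilde{\SS}_{x_0,\tt_j}\to\widetilde{\SS}_\infty$ of Proposition~\ref{resclimit} to $C^1_{\mathrm{loc}}$--closeness to the crossing on a \emph{full} backward parabolic neighborhood of $x_0$. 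The delicate part is precisely this passage from convergence along the chosen sequence of rescaled times to uniform control on a whole space--time neighborhood; here embeddedness and multiplicity one (Corollary~\ref{m12trips}) are indispensable, as they forbid any higher--multiplicity sheet and, combined with the density accounting, prevent a competing non--flat tangent flow from forming nearby.

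Once the flow is known to be uniformly close to two unit--multiplicity lines meeting transversally on such a neighborhood, I would invoke the local regularity at a transversal crossing proved in \cite{Ilnevsch}: a curvature flow sufficiently $C^1_{\mathrm{loc}}$--close, on a backward parabolic neighborhood, to two transversally crossing lines of multiplicity one is in that neighborhood a smooth graphical perturbation of the two lines, with uniformly bounded curvature. Applying this in a neighborhood of $x_0$ gives $|k(x,t)|\le C<+\infty$ for all such $x$ and all $t\in[0,T)$, which is the assertion. I expect the genuinely new input beyond the line/triod case of Theorem~\ref{regularity} to be this crossing regularity, and the technical heart of the proof to be the upgrade of the blow--up convergence to uniform neighborhood control via the monotonicity formula and the multiplicity--one property.
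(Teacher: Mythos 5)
Your reduction of the boundary case to the interior case by reflecting across $x_0$ matches the paper (which performs the reflection at the end rather than the beginning), and your localization away from the collapsed point is in the spirit of the paper's first step, where an annulus $B_{3R\sqrt{2(T-t_j)}}(x_0)\setminus B_{R\sqrt{2(T-t_j)}}(x_0)$ with equibounded curvature, no $3$--points and lengths bounded below is produced and four ``artificial'' boundary points with uniform estimates~\eqref{endsmooth} are introduced. The genuine gap is the step you yourself single out as the heart of the matter. The ``local regularity at a transversal crossing'' that you attribute to~\cite{Ilnevsch} does not exist there in the form you need: that paper proves short time \emph{existence} for a flow starting at a non--regular network, not an $\varepsilon$--regularity statement yielding uniform curvature bounds for flows $C^1\loc$--close to a multiplicity--one crossing on a backward parabolic neighborhood. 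Worse, the statement you formulate is structurally wrong for the situation at hand: near $x_0$ the evolving network is \emph{not} a graphical perturbation of the two lines. The limit is a \emph{degenerate} regular shrinker whose core is the collapsing curve joining the two triple junctions, so for $t$ close to $T$ the network near $x_0$ consists of five arcs --- two triple junctions joined by a short segment plus four outer curves --- a topology that no union of two graphs over crossing lines can have. Since the Gaussian density at $x_0$ is $2$, White's theorem is indeed unavailable there, and your proposal leaves exactly the hard point (a curvature bound at a density--$2$ point) resting on a theorem that is not in the literature you cite.

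The paper closes this point by a quantitative energy argument that needs no space--time upgrade of the convergence at all. The $W^{2,2}\loc$--convergence of $\widetilde{\SS}_{x_0,\tt_j}$ to the flat limit gives $\int_{B_{3R}(0)\cap\widetilde{\SS}_{x_0,\tt_j}}\widetilde{k}^2\,d\sigma\le\eps_j\to 0$, which unrescaled reads $\int_{B_{3R\sqrt{2(T-t_j)}}(x_0)\cap\SS_{t_j}}k^2\,ds\le \eps_j/\sqrt{2(T-t_j)}$, as in~\eqref{notrescaled}. Then the local a priori estimate of Lemma~10.23 in~\cite{mannovplusch} keeps $\Vert k\Vert_{L^2}$ bounded in the smaller ball for a time of order $\bigl(\Vert k\Vert_{L^2}^2+1\bigr)^{-2}$, and the crucial computation is that with initial datum $\eps_j/\sqrt{2(T-t_j)}$ this guaranteed time exceeds $T-t_j$ once $\eps_j$ is small enough, so the $L^2$ bound persists up to $T$; Lemma~10.24 of~\cite{mannovplusch} then bounds $\Vert k_s\Vert_{L^2}$ (using the length bounds to the artificial boundary points), and the interpolation inequality~\eqref{inkappa} yields the $L^\infty$ bound on $k$. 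Note that this argument uses the smallness of the curvature only at the single times $t_j$ and propagates it \emph{forward} in time by an ODE--type estimate, so the delicate passage you flag --- from subsequential blow--up convergence to uniform control on a full backward parabolic neighborhood --- is never needed; that is precisely what makes the density--$2$ point tractable without any crossing regularity theorem.
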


\begin{proof}
First we restrict to the case in which the sequence of rescaled networks
$\widetilde{\SS}_{x_0,\tt_{j}}$ converges, as $j\to\infty$, in
$C^{1,\alpha}\loc\cap W^{2,2}\loc$, for any $\alpha \in (0,1/2)$, to
$\widetilde{\SS}_\infty$ limit degenerate regular shrinker 
composed by four concurring halflines with opposite unit tangent vectors in pairs, forming 
angles of $120/60$ degrees between them.
By arguing as in~\cite[Theorem~2.4]{MMN13} or~\cite[Lemma~9.1]{mannovplusch} (keeping into account of~\cite[Lemma~8.18]{mannovplusch} and the second point of Remark~8.21 in the same paper), we can assume
that for $R>0$ large enough there exists $j_0\in\NN$, such the flow
$\SS_t$ has equibounded curvature, no 3--points and an uniform bound from below on the lengths
of the four curves in the annulus $B_{3R\sqrt{2(T-t_j)}}(x_0)\setminus
  B_{R\sqrt{2(T-t_j)}}(x_0)$, for every $t\in[t_j,T)$ and $j\geq j_0$.
We can thus introduce four ``artificial'' moving boundary points
$P^r(t)\in\SS_t$ with $\vert P^r(t)-x_0\vert=2R\sqrt{2(T-t_j)}$, with $r\in\{1,
2, 3, 4\}$ and $t\in [t_j,T)$ such that there 
exist uniform (in time) constants $C_j$, for every $j\in\NN$, such that 
\begin{equation}\label{endsmooth}
\vert\partial_s^jk(P^r,t)\vert+\vert\partial_s^j\lambda(P^r,t)\vert\leq C_j\,,
\end{equation}
for every $t\in[0,T)$ and $r\in{1,2,\dots,l}$.
As the sequence of rescaled networks $\widetilde{\SS}_{x_0,\tt_{j}}$
converges, as $j\to\infty$, in $W^{2,2}\loc$, to a limit network
$\widetilde{\SS}_\infty$ with zero curvature, we have
$$
\lim_{j\to\infty}\Vert
\widetilde{k}\Vert_{L^2(B_{3R}(0)\cap\widetilde{\SS}_{x_0,\tt_{j}})}=0\,,\qquad
\text{ that is, }\qquad 
\int_{B_{3R}(0)\cap\widetilde{\SS}_{x_0,\tt_{j}}}\widetilde{k}^2\,d\sigma\leq\varepsilon_j\,,
$$
for a sequence $\varepsilon_j\to 0$ as $j\to\infty$.
Rewriting this condition for the non--rescaled networks, we have
\begin{equation}\label{notrescaled}
\int_{B_{3R\sqrt{2(T-t_j)}}(x_0)\cap\mathbb{S}_{t_j}} k^2\,ds\leq \frac{\varepsilon_j}{\sqrt{2(T-t_j)}}\,.
\end{equation}

Then, applying~\cite[Lemma~10.23]{mannovplusch}  to the flow of networks $\SS_t$
in the ball $B_{2R\sqrt{2(T-t_j)}}(x_0)$  in the time interval
 $[t_j,T)$, we have that $\Vert k\Vert_{L^2(B_{2R\sqrt{2(T-t_j)}}(x_0)\cap\SS_t)}$ is uniformly bounded, up to time 
$$
T_j=t_j+\min\,\Bigl\{ T, 1\big/
8C\,\bigl(\Vert k
  \Vert^2_{L^2(B_{2R\sqrt{2(T-t_j)}}(x_0)\cap\SS_{t_j})}+1\bigr)^2\Bigr\}\,.
$$
We want to see that actually $T_j>T$ definitely, hence, $\Vert k
  \Vert_{L^2(B_{2R}(x_0)\cap\SS_t)}$ is uniformly bounded for
    $t\in[0,T)$. If this is not true, we have
\begin{align*}
T_j=&\,t_j+\frac{1}{8C\,\bigl(\Vert k  \Vert^2_{L^2(B_{2R\sqrt{2(T-t_j)}}(x_0)\cap\SS_{t_j})}+1\bigr)^2}\\
\geq&\,t_j+\frac{1}{8C\,\bigl(\eps_j/\sqrt{2(T-t_j)}+1\bigr)^2}\\
=&\,t_j+\frac{2(T-t_j)}{8C\,\bigl(\eps_j+\sqrt{2(T-t_j)}\,\bigr)^2}\\
=&\,T+(2(T-t_j))\biggl(\frac{2}{8C\,\bigl(\eps_j+\sqrt{2(T-t_j)}\,\bigr)^2}-1\biggr)\,,
\end{align*}
which is clearly definitely larger than $T$, as $\eps_j\to0$, when
$j\to\infty$.\\
Choosing then $j_1\geq j_0$ large enough, since $\Vert
k\Vert_{L^2(B_{2R\sqrt{2(T-t_{j_1})}}(x_0)\cap\SS_t)}$ is 
uniformly bounded for all times $[t_{j_1},T)$ and 
the length of the four curves that connect the junctions with the ``artificial'' boundary points $P^r(t)$
are bounded below by a uniform constant,
then, by~\cite[Lemma~10.24]{mannovplusch}, the quantity 
$\Vert k_s\Vert_{L^2(B_{2R\sqrt{2(T-t_{j_1})}}(x_0)\cap\SS_t)}$
is uniformly bounded on $[0,T)$.
Moreover, thanks to~\cite[Lemma~10.22]{mannovplusch}, 
in the ball $B_{2R\sqrt{2(T-t_j)}}(x_0)$ we have the uniform in time inequality for $\mathbb{S}_t$ 
\begin{equation}\label{inkappa}
\|k\|_{L^\infty}^2 \le   4C+10\|k\|_{L^2} \|k_s\|_{L^2}\,,
\end{equation}
that is $\|k\|_{L^\infty}$ is bounded  for every $t\in[0,T)$.

If the limit degenerate regular shrinker $\widetilde{\SS}_\infty$ is
composed by two concurring halflines forming an angle of $120$
degrees between them, then we symmetrize $\mathbb{S}_t$ with respect to $x_0\in\partial\Omega$
and we can argue as before for the union of $\mathbb{S}_t$ with the reflected network.
\end{proof}

\begin{proof}[Proof of Proposition~\ref{kscoppia}] By the previous discussion and this proposition, considering the list of possible blow--up limits $\widetilde{\mathbb{S}}_\infty$ given by Proposition~\ref{possiblelimit}, when the curvature is unbounded, the only blow--up limits (up to rotations) are the Brakke spoon, the standard lens and the fish, which is the statement of Proposition~\ref{kscoppia}. Moreover, it also follows that locally around every end--point the curvature stays bounded.
\end{proof}

We now instead describe a case, common to most of the networks with two triple junctions,
in which surely both the curvature is unbounded and at least one length goes to zero.

\begin{prop}\label{loop}
Let $\mathbb{S}_0$ be a network with two triple junctions and with a loop $\ell$ of length $L$, enclosing a region of area $A$ and let $\mathbb{S}_t$ be a smooth evolution by curvature of such network in the maximal time interval $[0,T)$. Then, $T$ is finite and if $\lim_{t\to T}L(t)=0$, there holds $\lim_{t\to T}\int_{\mathbb{S}_t}k^2\,ds=+\infty$.
\end{prop}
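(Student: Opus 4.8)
The plan is to treat the two assertions separately, both resting on the area evolution~\eqref{evolarea} of Proposition~\ref{ppp1} together with an elementary Gauss--Bonnet computation for the loop $\ell$.

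First, to prove $T<+\infty$, I would integrate~\eqref{evolarea}. Since $\ell$ is composed of $m\in\{1,2\}$ curves, throughout the smooth flow the enclosed area satisfies $\frac{dA(t)}{dt}=-2\pi+m\frac{\pi}{3}$, a strictly negative constant, so $A(t)=A-\bigl(2\pi-m\frac{\pi}{3}\bigr)t$ decreases linearly in $t$. As $A(t)$ must stay positive for as long as $\ell$ encloses a region (which it does on the whole interval of smooth existence, the combinatorial structure being preserved), the flow cannot persist past $t=A\big/\bigl(2\pi-m\frac{\pi}{3}\bigr)$. Hence $T\le A\big/\bigl(2\pi-m\frac{\pi}{3}\bigr)<+\infty$.

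For the curvature blow--up I would first record the total curvature of the loop. Viewing $\ell$ as a piecewise $C^2$ Jordan curve bounding a topological disk, with corners exactly at its $m$ triple junctions, the Herring condition forces the interior angle of the enclosed region at each such junction to be $120$ degrees: the loop occupies the $120$--degree sector opposite to the third, external, curve. Thus each exterior angle equals $\pi/3$, and the Gauss--Bonnet theorem gives
\begin{equation*}
\Bigl|\int_\ell k\,ds\Bigr|=2\pi-m\frac{\pi}{3}>0,
\end{equation*}
a fixed positive constant along the whole flow (consistently with the area decay rate found above).

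Finally I would combine this with the Cauchy--Schwarz inequality. For every $t\in[0,T)$,
\begin{equation*}
\Bigl(2\pi-m\tfrac{\pi}{3}\Bigr)^2=\Bigl(\int_\ell k\,ds\Bigr)^2\le L(t)\int_\ell k^2\,ds\le L(t)\int_{\mathbb{S}_t}k^2\,ds,
\end{equation*}
where $L(t)$ denotes the length of $\ell$, whence $\int_{\mathbb{S}_t}k^2\,ds\ge\bigl(2\pi-m\frac{\pi}{3}\bigr)^2\big/L(t)$. The hypothesis $\lim_{t\to T}L(t)=0$ then forces $\lim_{t\to T}\int_{\mathbb{S}_t}k^2\,ds=+\infty$, as claimed. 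The only genuinely nonroutine step is the Gauss--Bonnet computation: one must correctly identify the interior angles at the triple junctions as $120$ degrees so that the total curvature of $\ell$ is a nonzero constant bounded away from $0$; note that the sign and orientation of $k$ are irrelevant, since only $\bigl(\int_\ell k\,ds\bigr)^2$ enters the estimate.
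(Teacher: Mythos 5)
Your proposal is correct and takes essentially the same route as the paper: finiteness of $T$ by integrating the area evolution~\eqref{evolarea}, then the H\"older (Cauchy--Schwarz) estimate $\bigl(\int_{\ell_t} k\,ds\bigr)^2\le L(t)\int_{\ell_t}k^2\,ds$ applied to the constant total curvature $2\pi-m\frac{\pi}{3}$ of the loop. The only cosmetic difference is that you obtain this constant directly from Gauss--Bonnet with exterior angles $\pi/3$ at the $m$ junctions, while the paper reads it off as $\bigl\vert\frac{dA(t)}{dt}\bigr\vert=\bigl\vert\int_{\ell_t}k\,ds\bigr\vert$ --- the same computation, since~\eqref{evolarea} is itself derived via Gauss--Bonnet.
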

\begin{proof}
If a loop is present, by the classification of topological structures of the networks with two triple junctions, it must be composed 
of $m$ curves, with $m\leq 2$, hence, integrating in time equation~\eqref{evolarea}, we have
$$
A(t)-A(0)=\left(-2\pi+m\left(\frac{\pi}{3}\right)\right)t\,,
$$
therefore, $T\leq\frac{3A(0)}{(6-m)\pi}$, otherwise a region of the network collapses before the maximal time, which is impossible.

If $L(t)\to0$ as $t\to T$, also the area $A(t)$ of the region enclosed in the loop must go to zero and $T=\frac{3A(0)}{(6-m)\pi}$. Then, 
combining equation~\eqref{evolarea} and H\"{o}lder inequality, one gets
$$
\Big\vert -2\pi+m\left(\frac{\pi}{3}\right)\Big\vert=\Big\vert \frac{dA(t)}{dt}\Big\vert =\Big\vert \int_{\ell_t} k\,ds\Big\vert
\leq \left(L(t)\right)^\frac12\left(\int_{\ell_t} k^2\,ds \right)^\frac12\,,
$$
hence, 
$$
\int_{\mathbb{S}_t} k^2\,ds\geq\int_{\ell_t} k^2\,ds\geq \frac{\left(6-m\right)^2\pi^2}{9L(t)}\,.
$$
Then clearly, when $t\to T$, as $L(t)\to 0$, the $L^2$--norm of the curvature goes to infinity.
\end{proof}

\subsection{Proof of the main result}

\begin{proof}[Proof of Theorem~\ref{main}]
Let $\mathbb{S}_t$ be a smooth evolution by curvature of a network with two triple junctions and (possibly) fixed end--points on $\partial\Omega$, with $\Omega$ regular, 
open and strictly convex subset of $\mathbb{R}^2$, in a maximal time interval $[0,T)$.

If a loop is present, by Proposition~\ref{loop}, the maximal time of smooth existence $T$ is finite. If such time $T$ is smaller than the ``natural'' time that the loop shrinks (depending on the number of curves composing the loop, as in Proposition~\ref{loop}), the network is locally a tree, uniformly for $t\in[0,T)$. Hence, every blow--up limit at any point $x_0\in\overline{\Omega}$ cannot contain loops, then Proposition~\ref{possiblelimit} shows that it must have zero curvature, thus, by Theorem~\ref{regularity} and Proposition~\ref{cross} the curvature of $\SS_t$ is uniformly bounded along the flow and (see Proposition~\ref{collapse}) converges, as $t\to T$, to a degenerate regular network $\SS_T$ with vertices that are either a regular triple junction, an end--point, or 
\begin{itemize}
\item a $4$--point where the four concurring curves have opposite unit tangents in pairs and form 
angles of $120/60$ degrees between them
(collapse of the curve joining the two triple junctions of $\SS_t$);
\item a $2$--point at an end--point of the network $\SS_t$ where the
  two concurring curves form an angle of $120$
degrees among them (collapse of the curve joining a triple junction to such end--point of $\SS_t$).
\end{itemize}

The same conclusion clearly holds if $\SS_0$ is a tree and $T$ is finite.

If instead the time $T$ coincides with the vanishing time of a loop of the network, by Proposition~\ref{loop}, the curvature is unbounded and there must exists a reachable point for the flow $x_0\in\Omega$ and a sequence of times $t_j\to T$ such that, the associate sequence of rescaled networks $\widetilde{\SS}_{x_0,\tt_{j}}$, as in Proposition~\ref{resclimit}, 
converges in $C^{1,\alpha}\loc\cap W^{2,2}\loc$, for any $\alpha \in (0,1/2)$,
to a limit degenerate regular shrinker $\widetilde\SS_\infty$ which is either a Brakke spoon, or a standard lens or a fish.

If $T=+\infty$, hence $\SS_0$ is a tree, then $\mathbb{S}_t$ converges, as $t\to+\infty$, to a regular network with zero curvature (a stationary point for the length functional). Indeed, as the total length of the network decreases, we have the estimate
\begin{equation}\label{1}
\int_0^{+\infty}\int_{\mathbb{S}_t}k^2\,ds\,dt\leq L(0)<+\infty\,,
\end{equation}
by the first equation in Proposition~\ref{ppp1}. Then, suppose by contradiction that for a sequence of times $t_j\nearrow+\infty$
we have $\int_{\mathbb{S}_{t_j}}k^2\,ds\geq\delta$ for some $\delta>0$. 
By the following estimate, which is inequality~(10.4) in Lemma~10.23 of~\cite{mannovplusch}, 
$$
\frac{d}{dt}\int_{\mathbb{S}_t}k^2\,ds\leq C\Bigl( 1+\Bigl( \int_{\mathbb{S}_t}k^2\Bigr)\Bigr)^3\,,
$$
holding (in the case of fixed end--points) with a uniform constant $C$ independent of time, 
we would have $\int_{\mathbb{S}_{\widetilde{t}}}k^2\,ds\geq\frac{\delta}{2}$, for every $\widetilde{t}$ in a uniform neighborhood of every $t_j$. This is clearly in contradiction with the estimate~\eqref{1}.
Hence, $\lim_{t\to+\infty}\int_{\mathbb{S}_t}k^2\,ds=0$ and, consequently, for every sequence of times $t_i\to+\infty$, there exists a subsequence (not relabeled) such that the evolving networks $\SS_{t_i}$ converge in $C^{1,\alpha}\cap W^{2,2}$, for every $\alpha\in(0,1/2)$,
to a possibly degenerate regular network with zero curvature (hence, ``stationary'' for the length functional), as $i\to\infty$.
\end{proof}

\begin{rem} We underline that, in taking the limit of $\SS_{t_i}$, as $t_i\to T=+\infty$, one or more curves could collapse (possibly to an end--point).
\end{rem}

\begin{prop}\label{vanishing}
Let $\mathbb{S}_0$ be a network with two triple junctions and without end--points on $\partial\Omega$
and $\mathbb{S}_t$ an evolution by curvature in $[0,T)$, with $T<+\infty$.
Then, as $t\to T$, the total length of the network $L(t)$ cannot go to zero.
\end{prop}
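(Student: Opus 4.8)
The plan is to argue by contradiction. By the topological classification of Figure~\ref{clas}, a network with two triple junctions and no end--points on $\partial\Omega$ is either a ``theta'' or an ``eyeglasses'', and in both cases it encloses \emph{exactly two} bounded regions, whose areas I denote $A^1(t)$ and $A^2(t)$. Suppose $\lim_{t\to T}L(t)=0$. Since $\SS_t$ is connected, its diameter is bounded by $L(t)$, so the whole network collapses, as $t\to T$, to a single point $x_0\in\overline{\Omega}$. First I would record the behaviour of the two enclosed areas: by~\eqref{evolarea} each decays linearly, $A^i(t)=A^i(0)-c_i t$ with $c_i=2\pi-m_i\frac{\pi}{3}>0$ (here $m_i\in\{1,2\}$). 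By the isoperimetric inequality the boundary length of region $i$ is at least $\sqrt{4\pi A^i(t)}$, so $L(t)\to0$ forces $A^i(t)\to0$ for \emph{both} $i$; since no region can vanish strictly before the singular time $T$ (that would already produce a singularity), this pins down $A^i(t)=c_i(T-t)$ for $i=1,2$.

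Next I would blow up at the collapse point $x_0$. As a loop is present, Proposition~\ref{loop} gives $\int_{\SS_t}k^2\,ds\to+\infty$, so by Proposition~\ref{kscoppia} (via Proposition~\ref{resclimit}) there is a sequence of rescaled times along which $\widetilde{\SS}_{x_0,\tt_j}$ converges to a non--flat shrinker $\widetilde{\SS}_\infty$, necessarily a Brakke spoon, a standard lens, or a fish, \emph{each of which encloses exactly one region}. The key computation is the scaling of the enclosed areas: areas scale by $1/(2(T-t))$ under the rescaling, so the rescaled area of region $i$ equals $A^i(t)/(2(T-t))=c_i/2>0$, a fixed positive constant independent of $j$. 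Hence both regions persist, with positive area, in the rescaled pictures, and I would conclude that $\widetilde{\SS}_\infty$ must enclose two distinct regions of positive area.

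This yields the contradiction. By the classification of complete, embedded, regular shrinkers with at most two triple junctions recalled in Section~\ref{shri}, no such shrinker encloses two regions: a shrinker containing a region bounded by a single curve must be a Brakke spoon (H\"attenschweiler's argument), which has only one region, while a shrinker with two regions each bounded by two curves would be theta--shaped, and the theta--shaped shrinker does not exist (Figure~\ref{homtheta}). Thus $\widetilde{\SS}_\infty$ cannot exist, so $L(t)\to0$ is impossible.

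The main obstacle is precisely the step asserting that both regions are captured in a \emph{single} blow--up limit, that is, ruling out that one of the two collapsing regions ``escapes to spatial infinity'' in the rescaled picture (leaving only one region visible, consistent with a lone spoon/lens/fish). I expect to control this through the multiplicity--one property (Corollary~\ref{m12trips}), which forbids the more complicated degenerate limits, together with the fact that the two rescaled areas stay equal to the fixed constants $c_i/2$ while the network remains connected, keeping the two regions at comparable, bounded rescaled scale around $x_0$. For the ``theta'' this is transparent: all three curves join the two colliding junctions, so no curve can produce the non--compact end that every spoon, lens or fish must have. For the ``eyeglasses'' the connecting segment is the only candidate for such an end, and the delicate point is to verify that it collapses at the parabolic rate $\sqrt{T-t}$, so that in the limit the two loops sit at finite distance and reconstruct the (nonexistent) two--loop shrinker.
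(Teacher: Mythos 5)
Your overall strategy (linear decay of both enclosed areas, constancy of the rescaled areas $c_i/2$, blow--up at the collapse point, and contradiction with the classification of shrinkers, none of which bounds two regions) is exactly the paper's strategy, and your treatment of the theta case via contiguity of the two loops matches the paper's argument; the same contiguity argument in fact also disposes of the ``type B'' (nested) eyeglasses, which you lumped with type A. But the step you yourself flag as the ``main obstacle'' is a genuine gap, and it is precisely the point where the paper does something different. For the ``type A'' eyeglasses, your plan --- show the connecting curve collapses at the parabolic rate $\sqrt{T-t}$ so both loops stay at finite rescaled distance --- is not carried out, and the tools you propose cannot carry it out: a Brakke spoon limit with the second loop at rescaled distance tending to infinity is embedded, has multiplicity one, and is connected, so Corollary~\ref{m12trips} does not exclude it; and the constant rescaled area $c_i/2$ of the far loop is perfectly compatible with that loop drifting away, because $C^{1}\loc$ convergence gives no information outside compact sets. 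The paper explicitly concedes that this escape ``cannot be excluded a priori.''

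The paper's resolution is to stop trying to capture both loops in one blow--up and instead derive the contradiction from the spoon itself: following the proof of Proposition~\ref{cross}, if the blow--up at $x_0$ is a Brakke spoon, then there are a fixed small annulus around $x_0$ and a time $t_0$ such that for all $t\in(t_0,T)$ the network inside that annulus is a graph over a piece of a halfline through $x_0$; hence $\SS_t$ retains a definite amount of length near $x_0$ and cannot collapse to the point, contradicting $L(t)\to 0$ directly --- no rate estimate on the connecting curve is needed. To complete your proof you would need to substitute this (or an equivalent) argument for your unproven rate claim. A minor additional slip: your formula $c_i=2\pi-m_i\frac{\pi}{3}$ with $m_i\in\{1,2\}$ fails for the annular region of the nested eyeglasses, where the paper computes the decay rate $2\pi/3$ (the inner boundary contributes with opposite sign in Gauss--Bonnet); this is harmless for your argument, which only uses positivity and linearity of the decay, but the constants as stated are wrong in that case.
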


\begin{proof}
The network $\mathbb{S}_0$ can only be a $\Theta$--shaped or an eyeglasses--shaped network, as in the following figure, indeed, if some end--points are present, clearly the total length cannot go to zero.
\begin{figure}[H]
\begin{center}
\begin{tikzpicture}[scale=1]
\draw[shift={(0,0)}] 
(-1.73,-1.8) 
to[out= 180,in=180, looseness=1] (-2.8,0) 
to[out= 60,in=150, looseness=1.5] (-1.5,1) 
(-2.8,0)
to[out=-60,in=180, looseness=0.9] (-1.25,-0.75)
(-1.5,1)
to[out= -30,in=90, looseness=0.9] (-1,0)
to[out= -90,in=60, looseness=0.9] (-1.25,-0.75)
to[out= -60,in=0, looseness=0.9](-1.73,-1.8);
\path[shift={(0,0)}] 
(-1.25,-0.85)node[right]{$O^2$}
 (-1.5,-0.35)[left] node{$\gamma^2$}
 (-0.6,.9)[left] node{$\gamma^1$}
 (-0.6,-1.45)[left] node{$\gamma^3$}
 (-3,0.55) node[below] {$O^1$};
 \draw[shift={(5,0)}]
(-2,0) 
to[out= 170,in=40, looseness=1] (-2.9,1.2) 
to[out= -140,in=90, looseness=1] (-3.2,0)
(-2,0)
to[out= -70,in=0, looseness=1] (-3,-0.9) 
to[out= -180,in=-90, looseness=1] (-3.2,0)
(-2,0) 
to[out= 50,in=180, looseness=1] (-1.3,0) 
to[out= 60,in=150, looseness=1.5] (-0.75,1) 
(-1.3,0)
to[out= -60,in=-120, looseness=0.9] (-0.5,-0.75)
(-0.75,1)
to[out= -30,in=90, looseness=0.9] (-0.25,0)
to[out= -90,in=60, looseness=0.9] (-0.5,-0.75);
\path[shift={(5,0)}]
(-1.9,-0.25) node[left]{$O^1$}
(-1.3,0)node[right]{$O^2$}
(-2.9,0.8) node[below] {$\gamma^1$}
(-1.3,0.37)[left] node{$\gamma^3$}
(0,-0.95)[left] node{$\gamma^2$};
\draw[shift={(10,0)}] 
(-3,0) 
to[out= 170,in=140, looseness=1] (-2.1,1.4) 
to[out= -40,in=90, looseness=1] (0,0)
(-3,0) 
to[out= -70,in=-180, looseness=1] (-1,-1.3) 
to[out= 0,in=-90, looseness=1] (0,0)
(-3,0) 
to[out= 50,in=180, looseness=1] (-2.3,0) 
to[out= 60,in=150, looseness=1.5] (-1.75,1) 
(-2.3,0)
to[out= -60,in=-120, looseness=0.9] (-1.5,-0.75)
(-1.75,1)
to[out= -30,in=90, looseness=0.9] (-1.25,0)
to[out= -90,in=60, looseness=0.9] (-1.5,-0.75);
\path[shift={(10,0)}] 
(-2.9,-0.20) node[left]{$O^2$}
(-2.3,0)node[right]{$O^1$}
    (-2.5,0.65) node[below] {$\gamma^3$}
    (-0.7,0.5)[left] node{$\gamma^1$}
    (0,-0.5)[left] node{$\gamma^2$};
\end{tikzpicture}
\end{center}
\begin{caption}{A $\Theta$--shaped network and two different embeddings 
in $\mathbb{R}^2$ of eyeglasses--shaped networks (type A and type B).\label{fishshape}}
\end{caption}
\end{figure}
Consider first the case of a $\Theta$--shaped network.
For both regions the equation of the evolution of the area is 
$$
A'(t)=-\frac{4\pi}{3}\,,
$$
as shown in equation~\eqref{evolarea}.
If $A^1(t)\neq A^2(t)$, then a loop shrinks before the other and $\lim_{t\to T}L(t)\neq 0$.
Hence, $A^1(t)=A^2(t)=4\pi(T-t)/3$, for every $t\in[0,T)$. Taking a blow--up limit $\widetilde{\mathbb{S}}_\infty$ at a hypothetical vanishing point $x_0\in\Omega$, such limit also must contain two loops with equal finite area, since every rescaled network of the sequence $\widetilde{\SS}_{x_0,\tt}$, converging to $\widetilde{\mathbb{S}}_\infty$, 
contains two regions with area equal to $2\pi/3$ (the rescaling factor is $1/\sqrt{2(T-t)}$, see Section~\ref{rescaling}) and the two loops cannot vanish, going to infinity (neither collapsing to a core by the constant area), because they are contiguous and at least one is present in the possible limit shrinker (Brakke spoon, lens or fish). Then, $\widetilde{\mathbb{S}}_\infty$ cannot be a Brakke spoon, a standard lens or a fish, but the curvature must be unbounded, by Proposition~\ref{loop}, hence, this situation is not possible.

We now analyse an eyeglasses--shaped network of ``type B'' (see Figure~\ref{fishshape}).
We call $A^1$ the area enclosed in the curve $\gamma^1$,
$A^2$ the area between $\gamma^1$ and $\gamma^2$ and $A^3$ the sum of $A^1$
and $A^2$. Arguing, as before, by means of equation~\eqref{evolarea} and Gauss--Bonnet theorem, we get that it must be
$$
A^1(t)=5\pi(T-t)/3,\qquad
A^2(t)=2\pi(T-t)/3,\qquad
A^3(t)=7\pi(T-t)/3.
$$
Again, the two loops cannot vanish in the rescaling procedure by the same argument of the previous case and we exclude also this situation by the lack of a shrinker with two regions.

Arguing as before, in the case of a eyeglasses--shaped network of ``type A'' as in Figure~\ref{fishshape}, the evolution equation for the area of the regions is 
$$
A'(t)=-\frac{5\pi}{3}\,,
$$
but, in this situation, we cannot exclude a priori that one of the two loops goes to infinity along the converging sequence of rescaled networks, getting a Brakke spoon as blow--up limit (lens and fish are clearly not possible because of the eyeglasses topology). Anyway, following the proof of Proposition~\ref{cross}, when the blow--up limit around a point $x_0\in\Omega$ is a Brakke spoon, there exists a small annulus around $x_0$ and a time $t_0\in[0,T)$ such that, for every $t\in(t_0,T)$, the network $\SS_t$ in such annulus is a graph over a (piece of a) halfline through the point $x_0$. In particular, $\SS_t$ does not collapse to the point $x_0$, and we have a contradiction. Hence, also this case is impossible.
\end{proof}

\begin{rem}
The previous argument also implies that in a situation of symmetry for the $\Theta$--shaped network (that is
equal area of the two cells) the only possible singularity is a $4$--point formation:
the limit of the length of a curve that connects the two $3$--points goes to zero, as $t\to T$, and the curvature remains bounded.
The same holds in the case of symmetric eyeglasses of ``type A'' and of eyeglasses of ``type B'', if $A^1(0)/A^2(0)=5/2$.
\end{rem}

\section{Singularity formation in explicit cases and restarting the flow}

We state a special case of a theorem of Ilmanen, Neves and Schulze~\cite[Theorem~1.1]{Ilnevsch}, adapted to our situation, regarding the short time existence of a motion by curvature starting from a non--regular network, allowing us to continue the flow after the collision of the two triple junctions.

\begin{teo}\label{restarting}
Let $\mathbb{S}_T$ be a non--regular, connected, embedded, $C^1$ network
with bounded curvature having a single 4--point with the four concurring curves having unit tangent vectors 
forming angles of $120$ and $60$ degrees,
which is $C^2$ away from the $4$--point.
Then, there exists $\widetilde{T}>T$ and a smooth flow of connected regular networks $\mathbb{S}_{t}$, 
locally tree--like for $t\in(T,\widetilde{T})$, such that $\mathbb{S}_t$ is a regular Brakke flow for $t\in[T,\widetilde{T})$.
Moreover, away from the 4--point of $\mathbb{S}_{T}$, the convergence of $\mathbb{S}_t$ to $\mathbb{S}_T$,
as $t\rightarrow T^-$ is in $C^2\loc$ (or as smooth as $\mathbb{S}_0$).\\
Furthermore, there exists a constant $C>0$ such that $\sup_{\mathbb{S}_{t}}\left|k\right|\leq C/\sqrt{t-T}$ and the length of the shortest curve of $\mathbb{S}_{t}$ is bounded from below by $\sqrt{t-T}/C$, for all $t\in (T,\widetilde T)$.
\end{teo}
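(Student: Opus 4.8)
The plan is to obtain Theorem~\ref{restarting} as a direct specialization of the short time existence result of Ilmanen, Neves and Schulze \cite[Theorem~1.1]{Ilnevsch} to the particular non--regular datum $\mathbb{S}_T$. That theorem produces, starting from a connected, embedded, not necessarily regular planar network of bounded curvature whose unit tangent vectors at each multi--point are \emph{mutually distinct}, a curvature flow that is smooth and regular (hence satisfying the $120$ degrees condition) for positive times, is a Brakke flow up to the initial time, and converges smoothly to the datum away from the multi--points. Thus the first, and essentially only, thing I would do on our side is verify that $\mathbb{S}_T$ lies in the admissible class of \cite{Ilnevsch}, and then read off the stated conclusions.

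The single hypothesis that genuinely needs checking is the distinctness of the four unit tangent vectors at the $4$--point. By assumption these four exterior tangents come in opposite pairs and form alternating angles of $120$ and $60$ degrees; writing them, up to a rotation, as the directions at angles $0,\,2\pi/3,\,\pi,\,5\pi/3$, one sees at once that all four are distinct as directions in $\R^2$, so the admissibility condition of \cite[Theorem~1.1]{Ilnevsch} is satisfied. The remaining requirements — connectedness, embeddedness, global $C^1$ regularity with bounded curvature and $C^2$ regularity away from the vertex — are built into the statement, and I would only note that the $C^1$ (rather than $C^2$) regularity at the $4$--point is precisely the generality permitted by \cite{Ilnevsch}.

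Applying \cite[Theorem~1.1]{Ilnevsch} then yields the time $\widetilde{T}>T$, the smooth flow of regular networks $\mathbb{S}_t$ for $t\in(T,\widetilde{T})$, its Brakke property on $[T,\widetilde{T})$, and the $C^2\loc$ convergence $\mathbb{S}_t\to\mathbb{S}_T$ away from the $4$--point as $t\to T^-$; the self--similar construction underlying that theorem also supplies the estimates $\sup_{\mathbb{S}_t}|k|\le C/\sqrt{t-T}$ and shortest length $\ge\sqrt{t-T}/C$, which are the natural parabolic scaling of the resolution of the vertex. The point that still requires an argument is that the flow is \emph{locally tree--like}: near the former $4$--point it should, for each small $t>T$, consist of two $120$--degree triple junctions joined by a single short curve, with no loop forming. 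This is exactly the time--reversal of the collapse described in Proposition~\ref{collapse}, and I expect it to be the delicate step.

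To settle the tree--like structure I would identify the local model the flow resolves from, namely the degenerate configuration of four halflines at angles $120/60$ (the ``core'' of Section~\ref{shri} and Proposition~\ref{collapse}). The resolution out of this configuration is asymptotically a self--expander, and the admissible self--expanders with these prescribed asymptotics open the $4$--point into two triple junctions connected by a short edge rather than into a configuration enclosing a region; since no loop is created locally, the network is tree--like in a neighbourhood of the vertex, and the connecting curve has length comparable to $\sqrt{t-T}$, giving both the tree--like property and the lower bound on the shortest curve. The main obstacle is therefore controlling this local expander resolution and matching it uniformly to the smooth flow produced by \cite{Ilnevsch} on the complement of a shrinking neighbourhood of the $4$--point; once the local model is pinned down, the global conclusions follow by gluing it to the smooth convergence established away from the vertex.
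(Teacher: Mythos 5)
Your proposal matches the paper's treatment exactly: the paper offers no independent proof, presenting the theorem as a special case of Ilmanen--Neves--Schulze \cite[Theorem~1.1]{Ilnevsch}, whose sole nontrivial hypothesis --- mutually distinct unit tangent vectors at the multi--point --- you correctly verify for the $120/60$ configuration. The only adjustment is that the step you flag as still ``requiring an argument'' is not actually a gap: the locally tree--like structure, the Brakke property, and the bounds $\sup_{\mathbb{S}_t}|k|\le C/\sqrt{t-T}$ and shortest length $\ge\sqrt{t-T}/C$ are all part of the conclusion of the cited theorem itself, with the self--expander resolution you describe (unique for the four--halfline $120/60$ configuration among connected, tree--like self--expanders, by \cite[Corollary~11.17]{mannovplusch}) discussed in the paper's remarks following the statement rather than needed as a separate proof.
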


The following figure shows locally the singularity formation and the restarting of the flow, as described in this theorem, that we call a ``standard'' transition.
\begin{figure}[H]
\begin{center}
\begin{tikzpicture}[rotate=90,scale=0.75]
\draw[color=black!20!white, shift={(0,-3.3)}]
(-0.05,2.65)to[out= -90,in=150, looseness=1] (0.17,2.3)
(0.17,2.3)to[out= -30,in=100, looseness=1] (-0.12,2)
(-0.12,2)to[out= -80,in=40, looseness=1] (0.15,1.7)
(0.15,1.7)to[out= -140,in=90, looseness=1.3](0,1.1)
(0,1.1)--(-.2,1.35)
(0,1.1)--(+.2,1.35);
\draw[color=black!20!white, shift={(0,-9.3)}]
(-0.05,2.65)to[out= -90,in=150, looseness=1] (0.17,2.3)
(0.17,2.3)to[out= -30,in=100, looseness=1] (-0.12,2)
(-0.12,2)to[out= -80,in=40, looseness=1] (0.15,1.7)
(0.15,1.7)to[out= -140,in=90, looseness=1.3](0,1.1)
(0,1.1)--(-.2,1.35)
(0,1.1)--(+.2,1.35);
\draw[color=black]
(-0.05,2.65)to[out= 30,in=180, looseness=1] (2,3)
(-0.05,2.65)to[out= -90,in=150, looseness=1] (0.17,2.3)
(-0.05,2.65)to[out= 150,in=-20, looseness=1] (-2,3.3)
(0.17,2.3)to[out= -30,in=100, looseness=1] (-0.12,2)
(-0.12,2)to[out= -80,in=40, looseness=1] (0.15,1.7)
(0.15,1.7)to[out= -140,in=90, looseness=1](0,1.25)
(0,1.25)to[out= -30,in=180, looseness=1] (1.9,0.7)
(0,1.25)to[out= -150,in=-15, looseness=1] (-1.9,1.2);
\draw[color=black,dashed]
(-2,3.3)to[out= 160,in=-20, looseness=1](-2.7,3.5)
 (-1.9,1.2)to[out= 165,in=-15, looseness=1](-2.7,1.3)
(1.9,0.7)to[out= 0,in=-160, looseness=1] (2.6,0.9)
(2,3)to[out= 0,in=160, looseness=1] (2.8,2.9);
\draw[color=black!30!white,shift={(0,-7)}]
(0,2.65)--(1.73,3.65)
(0,2.65)--(1.73,1.65)
(0,2.65)--(-1.73,3.65)
(0,2.65)--(-1.73,1.65);
\draw[color=black,shift={(0,-7)}]
(0,2.65)to[out= -30,in=180, looseness=1] (1.9,2)
(0,2.65)to[out= -150,in=-15, looseness=1] (-1.9,2.3)
(0,2.65)to[out= 30,in=180, looseness=1] (2.2,3.3)
(0,2.65)to[out= 150,in=-20, looseness=1] (-2.2,3.1);
\draw[color=black,dashed,shift={(0,-7)}]
(-2.2,3.1)to[out= 160,in=-20, looseness=1](-3,3.3)
 (-1.9,2.3)to[out= 165,in=-15, looseness=1](-2.7,2.4)
(1.9,2)to[out= 0,in=-160, looseness=1] (2.6,2.2)
(2.2,3.3)to[out= 0,in=160, looseness=1] (3,3.2);
\path[font=\small]
(-1,-1.5) node[above]{$t\to T$}
(-1,-7.5) node[above]{$t>T$}
(2.8,-.2) node[below]{$\SS_t$}
(2.8,-12.4) node[below]{$\SS_t$}
(2.8,-6) node[below]{$\SS_T$};
\draw[color=black,scale=0.75,shift={(0,-16)},rotate=-30]
(0,2.65)to[out= -30,in=180, looseness=1] (1.9,2.1);
\draw[color=black,scale=0.75,dashed,shift={(0,-16)},rotate=-30]
(1.9,2.1)to[out= 0,in=-160, looseness=1] (2.8,2.4);
\draw[color=black,scale=0.75,shift={(2.65,-16)},rotate=30]
(0,2.65)to[out= 30,in=180, looseness=1] (2,3);
\draw[color=black,dashed,scale=0.75,shift={(2.65,-16)},rotate=30]
(2,3)to[out= 0,in=160, looseness=1] (2.9,2.9);
\draw[color=black,scale=0.75,shift={(0,-16)},rotate=30]
(0,2.65)to[out= -150,in=-15, looseness=1] (-1.9,2.6);
\draw[color=black,dashed,scale=0.75,shift={(0,-16)},rotate=30]
(-1.9,2.6)to[out= 165,in=-15, looseness=1](-2.8,3.0);
\draw[color=black,scale=0.75,shift={(-2.65,-16)},rotate=-30]
(0,2.65)to[out= 150,in=-20, looseness=1] (-2,3.3);
\draw[color=black,dashed,scale=0.75,shift={(-2.65,-16)},rotate=-30]
(-2,3.3)to[out= 160,in=-20, looseness=1](-3,3.5);
\draw[color=black,scale=0.75,shift={(0,-6)}]
(-1.32,-7.7)to[out= 0,in=-150, looseness=1]
(-0.65,-7.4)to[out= 30,in=150, looseness=1]
(0.65,-8)to[out= -30,in=180, looseness=1](1.32,-7.7);
\end{tikzpicture}
\end{center}
\begin{caption}{The local description of a ``standard'' transition.\label{rest}}
\end{caption}
\end{figure}

\begin{rem}\label{nosym}
Notice that the transition, passing by $\SS_T$, is not symmetric: when $\SS_t\to\SS_T$, as $t\to T^-$, the unit tangents, hence the four angles between the curves, are continuous, while when $\SS_t\to\SS_T$, as $t\to T^+$, there is a ``jump'' in such angles, precisely, there is a ``switch'' between the angles of $60$ degrees and the angles of $120$ degrees.
\end{rem}

\begin{rem}
A regular $C^2$  network $\SS=\bigcup_{i=1}^n\sigma^i(I^i)$ 
is called a {\em self--expander} if at every point $x\in\SS$ there holds
\begin{equation*}\label{expandeq}
\underline{k} - x^\perp=0\,. 
\end{equation*}
Let $x_0$ be the 4--point of $\mathbb{S}_T$ and consider the rescalings 
$$
\widetilde{\SS}_{x_0,\tt}=\frac{\mathbb{S}_{t}-x_0}{\sqrt{2(t-T)}}\,,
$$
with $\tt(t)=-\frac12\log(t-T)$.
Then as $\tt\to+\infty$ the rescaled networks $\widetilde{\SS}_{x_0,\tt}$
tend to the unique connected self--expander $\widetilde{\SS}_\infty$ which ``arises'' from the network given by the union of the halflines from the origin 
generated by the unit tangent vectors of the four concurring curves at $x_0$ (see~\cite{mazsae}).
\end{rem}

\begin{rem}
For a general network, a flow of an initial non--regular network given by the general version of the above theorem, is not unique, even if 
$\mathbb{S}_T$ is composed only by halflines from the origin and we search a solution between the connected, tree--like self--expanding networks.
In the particular situation of four halflines forming angles of $60/120$ degrees, instead there exists exactly only one connected, tree--like self--expanding solution (see~\cite[Corollary~11.17]{mannovplusch}).

Considering only the {\em locally connected} network flows has a clear ``physical'' meaning: such a choice ensures that initially separated regions 
remain separated during the flow.
\end{rem}

\begin{rem}
Notice that Theorem~\ref{restarting} gives only a short time existence result,
indeed, it is not possible to say in general if and when another singularity could appear. In particular, we are not able to exclude that the singular times may accumulate.
\end{rem}

\begin{rem}
In the actual formulation, the ``restarting theorem'' (Theorem~1.1 in~\cite{Ilnevsch} or Theorem~11.1 in~\cite{mannovplusch}) works after the onset of the first singularity
only in the case in which the curvature remains bounded and a triple junction does not
collapse to an end--point on the boundary of $\Omega$ (case $1$ of Theorem~\ref{main}).
However, with a (non trivial) modification (see Remark~11.20 in~\cite{mannovplusch}) of such theorem, we should be able to restart the flow also after the appearance of general singularities in which some regions collapse and the curvature is not bounded (case $3$ of Theorem~\ref{main}).\\
We will assume the validity of such extension in the following description of the behavior of the networks.

For rigorous proofs about the convergence, as $t\to T$, of the network $\SS_t$ to a limit network $\SS_T$ and the restarting of the flow,
we refer the reader to Sections~11,~12 and~13 in~\cite{mannovplusch}.
\end{rem}

\subsection{The tree}

This is the only network with two triple junctions which does not present loops. Consequently, it is the only case
 where we could  have global existence.
\begin{figure}[H]
\begin{center}
\begin{tikzpicture}[scale=1]
\draw[black!70!black]
 (-3.73,0)
to[out= 50,in=180, looseness=1] (-2.3,0.7)
to[out= 60,in=180, looseness=1.5] (-0.45,1.55)
(-2.3,0.7)
to[out= -60,in=130, looseness=0.9] (-1,-0.3)
to[out= 10,in=100, looseness=0.9](0.1,-0.8)
(-1,-0.3)
to[out=-110,in=50, looseness=0.9](-2.7,-1.7);
\draw[color=black!50!white,scale=1,domain=-3.141: 3.141,
smooth,variable=\t,shift={(-1.72,0)},rotate=0]plot({2.*sin(\t r)},
{2.*cos(\t r)}) ;
\path
 (-3.73,0) node[left]{$P^1$}
 (-2.7,-1.77)node[below]{$P^2$}
 (0.1,-0.8)node[right]{$P^3$}
  (-0.43,1.6) node[right]{$P^4$}
   (-3,0.6) node[below] {$\gamma^1$}
   (-1.5,1.3) node[right] {$\gamma^4$}
   (-1.1,-1.2)[left] node{$\gamma^2$}
   (0,-0.8)[left] node{$\gamma^3$}
    (-1.3,0.5)[left] node{$\gamma^5$}
  (-2.4,1.3) node[below] {$O^1$}
   (-0.8,0.3) node[below] {$O^2$};
\end{tikzpicture}
\end{center}
\begin{caption}{The tree.\label{tre}}
\end{caption}
\end{figure}
The behavior of the flow $\SS_t$ is described by Theorem~\ref{main}. We only mention that if at the maximal time $T$ no ``boundary'' curve collapses, then $\SS_t$ converges to a limit network $\SS_T$ with bounded curvature and, restarting the flow by means of Theorem~\ref{restarting}, we get another regular tree which is the only ``other'' possible connected, regular tree, joining the four fixed end--points  $P^1$, $P^2$, $P^3$ and $P^4$. That is, the ``standard'' transition at time $T$, as in Figure~\ref{rest}, transforms one in the other and viceversa. The natural question, if during the flow there could appear infinite singular times (and also whether they could ``accumulate'') producing an ``oscillation'' phenomenon between the two structures, has no answer at the moment.

\begin{figure}[H]
\begin{center}
\begin{tikzpicture}[scale=0.6]
\draw[color=black!50!white,rotate=90,shift={(0,-1.9)}, scale=0.6]
(-0.05,2.65)to[out= -90,in=150, looseness=1] (0.17,2.3)
(0.17,2.3)to[out= -30,in=100, looseness=1] (-0.12,2)
(-0.12,2)to[out= -80,in=40, looseness=1] (0.15,1.7)
(0.15,1.7)to[out= -140,in=90, looseness=1.3](0,1.1)
(0,1.1)--(-.2,1.35)
(0,1.1)--(+.2,1.35);
\draw[color=black!50!white,rotate=90,shift={(0,-6.9)}, scale=0.6]
(-0.05,2.65)to[out= -90,in=150, looseness=1] (0.17,2.3)
(0.17,2.3)to[out= -30,in=100, looseness=1] (-0.12,2)
(-0.12,2)to[out= -80,in=40, looseness=1] (0.15,1.7)
(0.15,1.7)to[out= -140,in=90, looseness=1.3](0,1.1)
(0,1.1)--(-.2,1.35)
(0,1.1)--(+.2,1.35);
\draw[color=black!50!white,rotate=90,shift={(0,-11.9)}, scale=0.6]
(-0.05,2.65)to[out= -90,in=150, looseness=1] (0.17,2.3)
(0.17,2.3)to[out= -30,in=100, looseness=1] (-0.12,2)
(-0.12,2)to[out= -80,in=40, looseness=1] (0.15,1.7)
(0.15,1.7)to[out= -140,in=90, looseness=1.3](0,1.1)
(0,1.1)--(-.2,1.35)
(0,1.1)--(+.2,1.35);
\draw[color=black!50!white,rotate=90,shift={(0,-16.9)}, scale=0.6]
(-0.05,2.65)to[out= -90,in=150, looseness=1] (0.17,2.3)
(0.17,2.3)to[out= -30,in=100, looseness=1] (-0.12,2)
(-0.12,2)to[out= -80,in=40, looseness=1] (0.15,1.7)
(0.15,1.7)to[out= -140,in=90, looseness=1.3](0,1.1)
(0,1.1)--(-.2,1.35)
(0,1.1)--(+.2,1.35);
\draw[black]
 (-3.03,1.25) 
to[out= -50,in=180, looseness=1] (-2,0.6) 
to[out= 60,in=180, looseness=1.5] (-0.43,1.25) 
(-2,0.6)
to[out= -60,in=130, looseness=0.9] (-1.5,-0.3)
to[out= 10,in=100, looseness=0.9](-0.43,-1.25)
(-1.5,-0.3)
to[out=-110,in=50, looseness=0.9](-3.03,-1.25);
\draw[black, shift={(5,0)}]
 (-3.03,1.25) 
to[out= -50,in=180, looseness=1]  (-1.65,0.25)
to[out= 60,in=180, looseness=1.5] (-0.43,1.25) 
(-1.65,0.25)
to[out= 0,in=100, looseness=0.9](-0.43,-1.25)
(-1.65,0.25)
to[out=-120,in=50, looseness=0.9](-3.03,-1.25);
\draw[black, shift={(10,0)}]
 (-3.03,1.25) 
to[out= -50,in=180, looseness=1]
(-2.4,-0.2)
to[out= -60,in=50, looseness=1.5] (-3.03,-1.25)
(-2.4,-0.2)
to[out= 60,in=-130, looseness=0.9]  (-1.2,0.3)
to[out= 110,in=100, looseness=0.9](-0.43,1.25) 
(-1.2,0.3)
to[out=-10,in=50, looseness=0.9](-0.43,-1.25);

\draw[black, shift={(20,0)}]
  (-3.03,1.25) 
to[out= -40,in=150, looseness=1] (-1.8,0.5) 
to[out= 30,in=180, looseness=1.5] (-0.43,1.25) 
(-1.8,0.5)
to[out= -90,in=90, looseness=0.9] (-1.8,-0.5)
to[out= -30,in=110, looseness=0.9](-0.43,-1.25)
(-1.8,-0.5)
to[out=-150,in=10, looseness=0.9](-3.03,-1.25);

\draw[black, shift={(15,0)}]
 (-3.03,1.25) 
to[out= -50,in=120, looseness=1]  (-1.75,0.15)
to[out= 60,in=180, looseness=1.5] (-0.43,1.25) 
(-1.75,0.15)
to[out= -60,in=100, looseness=0.9](-0.43,-1.25) 
(-1.75,0.15)
to[out=-120,in=50, looseness=0.9] (-3.03,-1.25) ;
\draw[color=black, domain=-3.141: 3.141,
smooth,variable=\t,shift={(-1.72,0)},rotate=0, scale=0.9]plot({2.*sin(\t r)},
{2.*cos(\t r)}) ;
\draw[color=black, domain=-3.141: 3.141,
smooth,variable=\t,shift={(3.28,0)},rotate=0, scale=0.9]plot({2.*sin(\t r)},
{2.*cos(\t r)}) ;
\draw[color=black, domain=-3.141: 3.141,
smooth,variable=\t,shift={(8.28,0)},rotate=0, scale=0.9]plot({2.*sin(\t r)},
{2.*cos(\t r)}) ;
\draw[color=black, domain=-3.141: 3.141,
smooth,variable=\t,shift={(13.28,0)},rotate=0, scale=0.9]plot({2.*sin(\t r)},
{2.*cos(\t r)}) ;
\draw[color=black, domain=-3.141: 3.141,
smooth,variable=\t,shift={(18.28,0)},rotate=0, scale=0.9]plot({2.*sin(\t r)},
{2.*cos(\t r)}) ;
\end{tikzpicture}
\end{center}
\begin{caption}{The ``standard'' transition for a tree--shaped network.}
\end{caption}
\end{figure}

\subsection{The lens and the island}

We start considering a lens--shaped network.
\begin{figure}[H]
\begin{center}
\begin{tikzpicture}[scale=1]
\draw[color=black!70!black,shift={(5,0)}]
(-4.7,1)
to[out= -50,in=180, looseness=1] (-2.8,0)
to[out= 60,in=150, looseness=1.5] (-1.5,1)
(-2.8,0)
to[out=-60,in=180, looseness=0.9] (-1.25,-0.75)
(-1.5,1)
to[out= -30,in=90, looseness=0.9] (-1,0)
to[out= -90,in=60, looseness=0.9] (-1.25,-0.75)
to[out= -60,in=150, looseness=0.9](1,-1.3);
\draw[color=black!50!white,scale=1,domain=-3.15: 3.15,
smooth,variable=\t,shift={(3.28,0)},rotate=0]plot({3.25*sin(\t r)},
{2.5*cos(\t r)}) ;
\path[shift={(5,0)}]
(-5.6,-0.75)node[right]{$\Omega$}
(-4.7,1) node[left]{$P^1$}
(1,-1.3)node[right]{$P^2$}
(-3,0) node[below] {$O^1$}
(-2,0.5) node[below] {$A$}
(-1.2,-0.7)node[right]{$O^2$}
(-1.5,-1)[left] node{$\gamma^2$}
(-3.6,0.89) node[below] {$\gamma^1$}
(-0.6,0.9)[left] node{$\gamma^4$}
(0.7,-0.75)[left] node{$\gamma^3$};
\end{tikzpicture}
\end{center}
\begin{caption}{The lens.\label{lens}}
\end{caption}
\end{figure}
By Theorem~\ref{main} the maximal time $T$ is finite. 
Suppose that the length of both curves $\gamma^2$ and $\gamma^4(t)$ goes to zero, as $t\to T$, then, 
letting $A(0)$ be the area of the bounded region at time $t=0$, we have $T=\frac{3A(0)}{4\pi}$, such region is collapsing,  the curvature is unbounded and the blow--up limit around the collapsing point is a lens--shaped or fish--shaped regular shrinker. 
As $t\to T$, the network $\SS_t$ converges to a single curve joining $P^1$ and $P^2$ which is $C^1$ in the first case  and with an angle in the second one, then, restarting the flow, it becomes immediately smooth. Notice that the bounded region cannot collapse to an end--point, by Lemma~\ref{trinoncoll}.

If $T<\frac{3A(0)}{4\pi}$, the bounded region cannot collapse (by equation~\eqref{evolarea}) and the network is locally uniformly a tree, hence, the curvature stays bounded, a curve collapses and $\SS_t$ converges to a limit network $\SS_T$. Notice that only one curve can collapse, by Lemma~\ref{trinoncoll} and the fact that the region does not collapse. If such collapsing curve is not a ``boundary'' one, restarting the flow by means of Theorem~\ref{restarting}, we get an island--shaped regular network.
\begin{figure}[H]
\begin{center}
\begin{tikzpicture}[scale=1]
\draw[color=black!70!black, shift={(10,0)}]
(-2,0)
to[out= 170,in=40, looseness=1] (-2.9,1.2)
to[out= -140,in=90, looseness=1] (-3.2,0)
(-2,0)
to[out= -70,in=0, looseness=1] (-3,-0.9)
to[out= -180,in=-90, looseness=1] (-3.2,0)
(-2,0)
to[out= 50,in=180, looseness=1] (-1.3,0)
to[out= 60,in=150, looseness=1.5] (-0.75,1)
(-1.3,0)
to[out= -60,in=-120, looseness=0.9] (-0.5,-0.75)
(-0.75,1)
to[out= -30,in=-150, looseness=0.9] (1.75,1)
(0,-2.32)
to[out= 90,in=60, looseness=0.9] (-0.5,-0.75);
\draw[color=black!50!white,scale=1,domain=-3.141: 3.141,
smooth,variable=\t,shift={(8.78,0)},rotate=0]plot({3.25*sin(\t r)},
{2.5*cos(\t r)}) ;
\path[shift={(10,0)}]
(-2,-0.2) node[left]{$O^1$}
(-1.3,0)node[right]{$O^2$}
(-2.85,0.8) node[below] {$\gamma^1$}
(-1.2,0.3)[left] node{$\gamma^2$}
(-.1,-1.3)[left] node{$\gamma^3$}
(1,1.5) node[below] {$\gamma^4$}
(0.5,-2.7)[left] node{$P^2$}
(2.5,1.35)[left] node{$P^1$};
\end{tikzpicture}
\end{center}
\begin{caption}{The island.\label{island}}
\end{caption}
\end{figure}
In this case, as before, the maximal time of existence $T$ of a smooth flow is finite and bounded by $\frac{3A(0)}{5\pi}$, where $A(0)$ is the area of the bounded region at time zero. 

If the length of the curve $\gamma^1$ goes to zero as $t\to T$, then
$T=\frac{3A(0)}{5\pi}$ and the curvature is unbounded. Notice that the curve joining the bounded region with the rest of the network cannot collapse at the same time, by repeating the argument in the final part of the proof of Proposition~\ref{vanishing}. Hence, the only possible blow--up limit around the collapsing point is a Brakke spoon. Moreover, Lemma~\ref{trinoncoll} and maximum principle also do not allow the region to collapse to an end--point of the network. Anyway, it could happen that a ``boundary'' curve collapses at the same time. The network $\SS_t$, as $t\to T$, converges to a network with a $C^1$ curve ending at the point where the region collapses. 

If $T<\frac{3A(0)}{5\pi}$, the bounded region cannot collapse (by equation~\eqref{evolarea}) and the network is locally uniformly a tree, hence, the curvature stays bounded, a curve collapses and $\SS_t$ converges to a limit network $\SS_T$. As above only one curve can collapse. If such collapsing curve is not a ``boundary'' one, restarting the flow by means of Theorem~\ref{restarting}, we get a lens--shaped regular network.

\begin{figure}[H]
\begin{center}
\begin{tikzpicture}[scale=0.65]
\draw[color=black!50!white,rotate=90,shift={(0,-1.9)}, scale=0.6]
(-0.05,2.65)to[out= -90,in=150, looseness=1] (0.17,2.3)
(0.17,2.3)to[out= -30,in=100, looseness=1] (-0.12,2)
(-0.12,2)to[out= -80,in=40, looseness=1] (0.15,1.7)
(0.15,1.7)to[out= -140,in=90, looseness=1.3](0,1.1)
(0,1.1)--(-.2,1.35)
(0,1.1)--(+.2,1.35);
\draw[color=black!50!white,rotate=90,shift={(0,-6.9)}, scale=0.6]
(-0.05,2.65)to[out= -90,in=150, looseness=1] (0.17,2.3)
(0.17,2.3)to[out= -30,in=100, looseness=1] (-0.12,2)
(-0.12,2)to[out= -80,in=40, looseness=1] (0.15,1.7)
(0.15,1.7)to[out= -140,in=90, looseness=1.3](0,1.1)
(0,1.1)--(-.2,1.35)
(0,1.1)--(+.2,1.35);
\draw[color=black!50!white,rotate=90,shift={(0,-11.9)}, scale=0.6]
(-0.05,2.65)to[out= -90,in=150, looseness=1] (0.17,2.3)
(0.17,2.3)to[out= -30,in=100, looseness=1] (-0.12,2)
(-0.12,2)to[out= -80,in=40, looseness=1] (0.15,1.7)
(0.15,1.7)to[out= -140,in=90, looseness=1.3](0,1.1)
(0,1.1)--(-.2,1.35)
(0,1.1)--(+.2,1.35);
\draw[color=black!50!white,rotate=90,shift={(0,-16.9)}, scale=0.6]
(-0.05,2.65)to[out= -90,in=150, looseness=1] (0.17,2.3)
(0.17,2.3)to[out= -30,in=100, looseness=1] (-0.12,2)
(-0.12,2)to[out= -80,in=40, looseness=1] (0.15,1.7)
(0.15,1.7)to[out= -140,in=90, looseness=1.3](0,1.1)
(0,1.1)--(-.2,1.35)
(0,1.1)--(+.2,1.35);
\draw[color=black, domain=-3.141: 3.141,
smooth,variable=\t,shift={(-1.72,0)},rotate=0, scale=0.9]plot({2.*sin(\t r)},
{2.*cos(\t r)}) ;
\draw[color=black, domain=-3.141: 3.141,
smooth,variable=\t,shift={(3.28,0)},rotate=0, scale=0.9]plot({2.*sin(\t r)},
{2.*cos(\t r)}) ;
\draw[color=black, domain=-3.141: 3.141,
smooth,variable=\t,shift={(8.28,0)},rotate=0, scale=0.9]plot({2.*sin(\t r)},
{2.*cos(\t r)}) ;
\draw[color=black,domain=-3.141: 3.141,
smooth,variable=\t,shift={(13.28,0)},rotate=0, scale=0.9]plot({2.*sin(\t r)},
{2.*cos(\t r)}) ;
\draw[color=black, domain=-3.141: 3.141,
smooth,variable=\t,shift={(18.28,0)},rotate=0, scale=0.9]plot({2.*sin(\t r)},
{2.*cos(\t r)}) ;
\draw[color=black]  
(-1.6,0.5) 
to[out= 60,in=-150, looseness=1](-0.43,1.25)
(-1.6,0.5) 
to[out= -60,in=150, looseness=1.5] (-1.5,0) 
(-1.6,0.5)
to[out=180,in=90,  looseness=1] (-3.25,-0.625)
to[out=-90,in=180, looseness=0.9] (-1.25,-0.75)
(-1.5,0)
to[out= -30,in=90, looseness=0.9] (-1,0)
to[out= -90,in=60, looseness=0.9] (-1.25,-0.75)
to[out= -60,in=150, looseness=0.9](-0.43,-1.25);
\draw[color=black,shift={(5,0)}]  
(-1.5,0)
to[out= 150,in=40, looseness=1] (-2.9,0.9) 
to[out= -140,in=90, looseness=1] (-3.2,0)
(-1.5,0) 
to[out= -150,in=0, looseness=1] (-3,-0.7) 
to[out= -180,in=-90, looseness=1] (-3.2,0)
(-1.5,0) 
to[out= 30,in=150, looseness=1.5] (-0.43,1.25) 
(-1.5,0)
to[out= -30,in=-120, looseness=0.9](-0.43,-1.25);
\draw[color=black,shift={(10,0)}]  
(-2,0)
to[out= 170,in=40, looseness=1] (-2.5,0.7) 
to[out= -140,in=90, looseness=1] (-2.8,0)
(-2,0) 
to[out= -70,in=0, looseness=1] (-2.65,-0.5) 
to[out= -180,in=-90, looseness=1] (-2.8,0)
(-2,0) 
to[out= 50,in=180, looseness=1] (-1.3,0) 
to[out= 60,in=150, looseness=1.5] (-0.75,1) 
(-1.3,0)
to[out= -60,in=-120, looseness=0.9] (-0.5,-0.75)
(-0.75,1)
to[out= -30,in=90, looseness=0.9](-0.43,1.25)
(-0.43,-1.25)
to[out= -90,in=60, looseness=0.9] (-0.5,-0.75);
\draw[color=black,shift={(15,0)}]  
(-1,0)
to[out= 120,in=40, looseness=1] (-1.9,0.6) 
to[out= -140,in=90, looseness=1] (-2.2,0)
(-1,0) 
to[out= -120,in=0, looseness=1] (-2,-0.4) 
to[out= -180,in=-90, looseness=1] (-2.2,0)
(-1,0) 
to[out= 60,in=-150, looseness=1.5] (-0.43,1.25) 
(-1,0)
to[out= -60,in=-120, looseness=0.9](-0.43,-1.25);
\draw[black, shift={(20,0)}]
(-1,0.65) 
to[out= 45,in=180, looseness=1.5] (-0.43,1.25) 
(-1,0.65)
to[out= -75,in=75, looseness=0.9] (-1,-0.65)
(-1,0.65)
to[out= -195,in=90, looseness=0.9] (-1.5,0)
to[out= -90,in=195, looseness=0.9] (-1,-0.65)
to[out= -45,in=110, looseness=0.9](-0.43,-1.25);
\end{tikzpicture}
\end{center}
\begin{caption}{A ``standard'' transition trough a $4$--point transforms a lens in a island and vice versa.}
\end{caption}
\end{figure}

This discussion shows that the lens and the island shapes are in a sense ``dual'', with the meaning that a ``standard'' transition, as in Figure~\ref{rest}, transforms one in the other and viceversa. As before, we do not know if during the flow this kind of ``oscillation'' phenomenon can happen infinite (possibly accumulating) times.

\subsection{The theta and the eyeglasses}

For a regular theta--shaped network,  as in Figure~\ref{theta}, we call $A^1$ the area enclosed by the curves $\gamma^1$ and $\gamma^2$ and $A^2$ the area enclosed by $\gamma^2$ and $\gamma^3$.
\begin{figure}[H]
\begin{center}
\begin{tikzpicture}[scale=1]
\draw[color=black!70!black,shift={(5,0)}]
(-1.73,-1.8)
to[out= 180,in=180, looseness=1] (-2.8,0)
to[out= 60,in=150, looseness=1.5] (-1.5,1)
(-2.8,0)
to[out=-60,in=180, looseness=0.9] (-1.25,-0.75)
(-1.5,1)
to[out= -30,in=90, looseness=0.9] (-1,0)
to[out= -90,in=60, looseness=0.9] (-1.25,-0.75)
to[out= -60,in=0, looseness=0.9](-1.73,-1.8);
\draw[color=black!50!white,scale=1,domain=-3.15: 3.15,
smooth,variable=\t,shift={(3.3,0)},rotate=0]plot({3.25*sin(\t r)},
{2.5*cos(\t r)}) ;
\path[shift={(5,0)}]
(-5.6,-0.75)node[right]{$\Omega$}
(-1.25,-0.75)node[right]{$O^2$}
 (-1.5,-0.3)[left] node{$\gamma^2$}
  (-1.5,0.5)[left] node{$A^1$}
   (-1.5,-1.1)[left] node{$A^2$}
 (-0.8,1.1)[left] node{$\gamma^1$}
 (-0.6,-1.45)[left] node{$\gamma^3$}
 (-3,0.55) node[below] {$O^1$};
 \end{tikzpicture}
\end{center}
\begin{caption}{The theta.\label{theta}}
\end{caption}
\end{figure}
The analysis at the finite maximal time $T\leq \frac{3}{4\pi}\min\{A^1(0) , A^2(0)\}$ is similar to the previous cases. We know that the two regions cannot shrink at the same time (otherwise the whole network vanishes, which is excluded by Proposition~\ref{vanishing}), if $T=\frac{3}{4\pi}\min\{A^1(0) , A^2(0)\}$, the curvature cannot be bounded and we get a blow--up limit which is a lens or a fish. The network $\SS_t$ converges to a single closed curve, possibly with an angle,
which becomes immediately smooth when we restart the flow.

If $T<\frac{3}{4\pi}\min\{A^1(0) , A^2(0)\}$, then the two regions do not collapse (by equation~\ref{evolarea}) and the network is locally uniformly a tree, hence, the curvature stays bounded, only one curve collapses and $\SS_t$ converges to a limit network $\SS_T$ with bounded curvature. There are then two different cases: if the ``inner'' curve collapses, restarting the flow by means of Theorem~\ref{restarting}, we get a ``type A'' eyeglasses--shaped regular network, if instead one of the two ``external'' curves collapses, restarting the flow, we get a ``type B'' eyeglasses--shaped regular network.
\begin{figure}[H]
\begin{center}
\begin{tikzpicture}[scale=1]
\draw[black!70!black]
(-2,0)
to[out= 170,in=40, looseness=1] (-2.9,1.2)
to[out= -140,in=90, looseness=1] (-3.2,0)
(-2,0)
to[out= -70,in=0, looseness=1] (-3,-0.9)
to[out= -180,in=-90, looseness=1] (-3.2,0)
(-2,0)
to[out= 50,in=180, looseness=1] (-1.3,0)
to[out= 60,in=150, looseness=1.5] (-0.75,1)
(-1.3,0)
to[out= -60,in=-120, looseness=0.9] (-0.5,-0.75)
(-0.75,1)
to[out= -30,in=90, looseness=0.9] (-0.25,0)
to[out= -90,in=60, looseness=0.9] (-0.5,-0.75);
\draw[color=black!50!white,scale=1,domain=-4.141: 4.141,
smooth,variable=\t,shift={(-1.72,0)},rotate=0]plot({2.3*sin(\t r)},
{2.3*cos(\t r)}) ;
\path
(-3,-2.5) node[left]{$\Omega$}
(-1.88,-0.2) node[left]{$O^1$}
(-1.3,0)node[right]{$O^2$}
    (-2.8,0.8) node[below] {$\gamma^1$}
    (-1.3,0.4)[left] node{$\gamma^3$}
    (0,-0.95)[left] node{$\gamma^2$};
\draw[color=black!70!black,shift={(7,0)}]
(-3,0)
to[out= 170,in=140, looseness=1] (-2.1,1.4)
to[out= -40,in=90, looseness=1] (0,0)
(-3,0)
to[out= -70,in=-180, looseness=1] (-1,-1.3)
to[out= 0,in=-90, looseness=1] (0,0)
(-3,0)
to[out= 50,in=180, looseness=1] (-2.3,0)
to[out= 60,in=150, looseness=1.5] (-1.75,1)
(-2.3,0)
to[out= -60,in=-120, looseness=0.9] (-1.5,-0.75)
(-1.75,1)
to[out= -30,in=90, looseness=0.9] (-1.25,0)
to[out= -90,in=60, looseness=0.9] (-1.5,-0.75);
\draw[color=black!50!white,scale=1,domain=-4.141: 4.141,
smooth,variable=\t,shift={(5.28,0)},rotate=0]plot({2.3*sin(\t r)},
{2.3*cos(\t r)}) ;
\path[shift={(7,0)}]
(-3,-2.5) node[left]{$\Omega$}
(-2.85,-0.25) node[left]{$O^2$}
(-2.3,0)node[right]{$O^1$}
    (-2.5,0.72) node[below] {$\gamma^3$}
    (-0.7,0.5)[left] node{$\gamma^1$}
    (0,-0.5)[left] node{$\gamma^2$};
\end{tikzpicture}
\end{center}
\begin{caption}{The eyeglasses: ``type A'' and ``type B''.\label{eyeglasses}}
\end{caption}
\end{figure}
For these networks, if a region (or both in the ``type A'' case) collapses at the maximal time $T$, which is finite, the only possible blow--up limit is a Brakke spoon and 
$\SS_t$ converges to a network $\SS_T$ having, as in the case of the island, a curve with a ``free'' end--point. Otherwise, if there is no collapse of a region, restarting the flow by means of Theorem~\ref{restarting}, we get back to theta--shaped regular network.

As before, there is a sort of ``duality'' between the theta and the eyeglasses shapes: a ``standard'' transition transforms one in the other and viceversa. Again, we do not know if this kind of ``oscillations'' can happen infinite times.

\begin{figure}[H]
\begin{center}
\begin{tikzpicture}[scale=0.65]
\draw[color=black!50!white,rotate=90,shift={(0,-1.9)}, scale=0.6]
(-0.05,2.65)to[out= -90,in=150, looseness=1] (0.17,2.3)
(0.17,2.3)to[out= -30,in=100, looseness=1] (-0.12,2)
(-0.12,2)to[out= -80,in=40, looseness=1] (0.15,1.7)
(0.15,1.7)to[out= -140,in=90, looseness=1.3](0,1.1)
(0,1.1)--(-.2,1.35)
(0,1.1)--(+.2,1.35);
\draw[color=black!50!white,rotate=90,shift={(0,-6.9)}, scale=0.6]
(-0.05,2.65)to[out= -90,in=150, looseness=1] (0.17,2.3)
(0.17,2.3)to[out= -30,in=100, looseness=1] (-0.12,2)
(-0.12,2)to[out= -80,in=40, looseness=1] (0.15,1.7)
(0.15,1.7)to[out= -140,in=90, looseness=1.3](0,1.1)
(0,1.1)--(-.2,1.35)
(0,1.1)--(+.2,1.35);
\draw[color=black!50!white,rotate=90,shift={(0,-11.9)}, scale=0.6]
(-0.05,2.65)to[out= -90,in=150, looseness=1] (0.17,2.3)
(0.17,2.3)to[out= -30,in=100, looseness=1] (-0.12,2)
(-0.12,2)to[out= -80,in=40, looseness=1] (0.15,1.7)
(0.15,1.7)to[out= -140,in=90, looseness=1.3](0,1.1)
(0,1.1)--(-.2,1.35)
(0,1.1)--(+.2,1.35);
\draw[color=black!50!white,rotate=90,shift={(0,-16.9)}, scale=0.6]
(-0.05,2.65)to[out= -90,in=150, looseness=1] (0.17,2.3)
(0.17,2.3)to[out= -30,in=100, looseness=1] (-0.12,2)
(-0.12,2)to[out= -80,in=40, looseness=1] (0.15,1.7)
(0.15,1.7)to[out= -140,in=90, looseness=1.3](0,1.1)
(0,1.1)--(-.2,1.35)
(0,1.1)--(+.2,1.35);
\draw[color=black, domain=-3.141: 3.141,
smooth,variable=\t,shift={(-1.72,0)},rotate=0, scale=0.9]plot({2.*sin(\t r)},
{2.*cos(\t r)}) ;
\draw[color=black, domain=-3.141: 3.141,
smooth,variable=\t,shift={(3.28,0)},rotate=0, scale=0.9]plot({2.*sin(\t r)},
{2.*cos(\t r)}) ;
\draw[color=black, domain=-3.141: 3.141,
smooth,variable=\t,shift={(8.28,0)},rotate=0, scale=0.9]plot({2.*sin(\t r)},
{2.*cos(\t r)}) ;
\draw[color=black, domain=-3.141: 3.141,
smooth,variable=\t,shift={(13.28,0)},rotate=0, scale=0.9]plot({2.*sin(\t r)},
{2.*cos(\t r)}) ;
\draw[color=black, domain=-3.141: 3.141,
smooth,variable=\t,shift={(18.28,0)},rotate=0, scale=0.9]plot({2.*sin(\t r)},
{2.*cos(\t r)}) ;
\draw[color=black,shift={(0,0.3)}] 
(-1.73,-1.8) 
to[out= 180,in=180, looseness=1] (-2.8,0) 
to[out= 60,in=150, looseness=1.5] (-1.5,1) 
(-2.8,0)
to[out=-60,in=180, looseness=0.9] (-1.25,-0.75)
(-1.5,1)
to[out= -30,in=90, looseness=0.9] (-1,0)
to[out= -90,in=60, looseness=0.9] (-1.25,-0.75)
to[out= -60,in=0, looseness=0.9](-1.73,-1.8);
\draw[scale=0.33,rotate=90,color=black,shift={(0,-9.7)}]
(0,0)to[out= 30,in=180, looseness=1] (2.7,1.5)
(2.7,1.5)to[out= 0,in=90, looseness=1] (4.73,0)
(0,0)to[out= -30,in=180, looseness=1] (2.7,-1.5)
(2.7,-1.5)to[out= 0,in=-90, looseness=1] (4.73,0);
\draw[scale=0.33,rotate=-90,color=black,shift={(0,9.7)}]
(0,0)to[out= 30,in=180, looseness=1] (2.7,1.5)
(2.7,1.5)to[out= 0,in=90, looseness=1] (4.73,0)
(0,0)to[out= -30,in=180, looseness=1] (2.7,-1.5)
(2.7,-1.5)to[out= 0,in=-90, looseness=1] (4.73,0);
\draw[black, shift={(8.5,2)}, rotate=90]
(-2.25,-0.12) 
to[out= 20,in=180, looseness=1] (-1.55,0.17);
\draw[scale=0.43,color=black,shift={(19.34,-1)}, rotate=90]
(2,0)to[out= 60,in=180, looseness=1] (3.7,1)
(3.7,1)to[out= 0,in=90, looseness=1] (5,0)
(2,0)to[out= -60,in=180, looseness=1] (3.7,-1)
(3.7,-1)to[out= 0,in=-90, looseness=1] (5,0);
\draw[scale=0.43,color=black,shift={(19.34,1.3)}, rotate=-70]
(2,0)to[out= 60,in=180, looseness=1] (3.7,1)
(3.7,1)to[out= 0,in=90, looseness=1] (5,0)
(2,0)to[out= -60,in=180, looseness=1] (3.7,-1)
(3.7,-1)to[out= 0,in=-90, looseness=1] (5,0);
\draw[scale=0.35,rotate=90,color=black,shift={(0,-38)}]
(0,0)to[out= 60,in=180, looseness=1] (2,1.5)
(2,1.5)to[out= 0,in=90, looseness=1] (3.7,0)
(0,0)to[out= -60,in=180, looseness=1] (2,-1.5)
(2,-1.5)to[out= 0,in=-90, looseness=1] (3.7,0);
\draw[scale=0.35,rotate=-90,color=black,shift={(0,38)}]
(0,0)to[out= 60,in=180, looseness=1] (2,1.5)
(2,1.5)to[out= 0,in=90, looseness=1] (3.7,0)
(0,0)to[out= -60,in=180, looseness=1] (2,-1.5)
(2,-1.5)to[out= 0,in=-90, looseness=1] (3.7,0);
\draw[color=black,shift={(20,0)}] 
(-1.73,-1) 
to[out= 180,in=-120, looseness=1](-2.2,0)
to[out= 120,in=150, looseness=1.5] (-1.5,1) 
(-2.2,0)
to[out=0,in=180, looseness=0.9] (-1.4,0)
(-1.5,1)
to[out= -30,in=90, looseness=0.9] (-1.3,0.5)
to[out= -90,in=60, looseness=0.9] (-1.4,0)
to[out= -60,in=0, looseness=0.9](-1.73,-1);
\end{tikzpicture}
\end{center}
\begin{caption}{An example of evolution from a theta-shaped network to an Eyeglasses 
``type A'' passing by a $4$--point formation.}
\end{caption}
\end{figure}

\begin{figure}[H]
\begin{center}
\begin{tikzpicture}[scale=0.65]
\draw[color=black!50!white,rotate=90,shift={(0,-1.9)}, scale=0.6]
(-0.05,2.65)to[out= -90,in=150, looseness=1] (0.17,2.3)
(0.17,2.3)to[out= -30,in=100, looseness=1] (-0.12,2)
(-0.12,2)to[out= -80,in=40, looseness=1] (0.15,1.7)
(0.15,1.7)to[out= -140,in=90, looseness=1.3](0,1.1)
(0,1.1)--(-.2,1.35)
(0,1.1)--(+.2,1.35);
\draw[color=black!50!white,rotate=90,shift={(0,-6.9)}, scale=0.6]
(-0.05,2.65)to[out= -90,in=150, looseness=1] (0.17,2.3)
(0.17,2.3)to[out= -30,in=100, looseness=1] (-0.12,2)
(-0.12,2)to[out= -80,in=40, looseness=1] (0.15,1.7)
(0.15,1.7)to[out= -140,in=90, looseness=1.3](0,1.1)
(0,1.1)--(-.2,1.35)
(0,1.1)--(+.2,1.35);
\draw[color=black!50!white,rotate=90,shift={(0,-11.9)}, scale=0.6]
(-0.05,2.65)to[out= -90,in=150, looseness=1] (0.17,2.3)
(0.17,2.3)to[out= -30,in=100, looseness=1] (-0.12,2)
(-0.12,2)to[out= -80,in=40, looseness=1] (0.15,1.7)
(0.15,1.7)to[out= -140,in=90, looseness=1.3](0,1.1)
(0,1.1)--(-.2,1.35)
(0,1.1)--(+.2,1.35);
\draw[color=black!50!white,rotate=90,shift={(0,-16.9)}, scale=0.6]
(-0.05,2.65)to[out= -90,in=150, looseness=1] (0.17,2.3)
(0.17,2.3)to[out= -30,in=100, looseness=1] (-0.12,2)
(-0.12,2)to[out= -80,in=40, looseness=1] (0.15,1.7)
(0.15,1.7)to[out= -140,in=90, looseness=1.3](0,1.1)
(0,1.1)--(-.2,1.35)
(0,1.1)--(+.2,1.35);
\draw[color=black, domain=-3.141: 3.141,
smooth,variable=\t,shift={(-1.72,0)},rotate=0, scale=0.9]plot({2.*sin(\t r)},
{2.*cos(\t r)}) ;
\draw[color=black,domain=-3.141: 3.141,
smooth,variable=\t,shift={(3.28,0)},rotate=0, scale=0.9]plot({2.*sin(\t r)},
{2.*cos(\t r)}) ;
\draw[color=black,domain=-3.141: 3.141,
smooth,variable=\t,shift={(8.28,0)},rotate=0, scale=0.9]plot({2.*sin(\t r)},
{2.*cos(\t r)}) ;
\draw[color=black,domain=-3.141: 3.141,
smooth,variable=\t,shift={(13.28,0)},rotate=0, scale=0.9]plot({2.*sin(\t r)},
{2.*cos(\t r)}) ;
\draw[color=black,domain=-3.141: 3.141,
smooth,variable=\t,shift={(18.28,0)},rotate=0, scale=0.9]plot({2.*sin(\t r)},
{2.*cos(\t r)}) ;
\draw[color=black,shift={(0,0.3)}] 
(-1.73,-1.8) 
to[out= 180,in=180, looseness=1] (-2.8,0) 
to[out= 60,in=150, looseness=1.5] (-1.5,1) 
(-2.8,0)
to[out=-60,in=180, looseness=0.9] (-1.25,-0.75)
(-1.5,1)
to[out= -30,in=90, looseness=0.9] (-1,0)
to[out= -90,in=60, looseness=0.9] (-1.25,-0.75)
to[out= -60,in=0, looseness=0.9](-1.73,-1.8);
\draw[color=black,shift={(5.3,0)}, scale=1.3] 
(-2.2,-0.6) 
to[out= 180,in=-110, looseness=1.5] (-2.5,0.5) 
to[out= 70,in=150, looseness=1.5] (-1.3,1) 
(-2.2,-0.6) 
to[out=60,in=180, looseness=0.9] (-2,0.5)
to[out=0,in=0, looseness=1](-2.2,-0.6) 
(-1.3,1)
to[out= -30,in=90, looseness=0.9] (-0.7,0)
to[out= -90,in=-120, looseness=0.9] 
(-2.2,-0.6) ;
\draw[color=black,shift={(15,0)}] 
(-2.2,-0.6) 
to[out= 180,in=-110, looseness=1.5] (-2.5,0.5) 
to[out= 70,in=150, looseness=1.5] (-1,1) 
(-2.2,-0.6) 
to[out=120,in=180, looseness=0.9] (-2,0.5)
to[out=0,in=0, looseness=1](-2.2,-0.6) 
(-1,1)
to[out= -30,in=90, looseness=0.9] (-0.7,0)
to[out= -90,in=-60, looseness=0.9] 
(-2.2,-0.6) ;
\draw[color=black,shift={(8.3,1.5)}, rotate=90, scale=0.85]
(-3,0)
to[out= 170,in=140, looseness=1] (-2.1,1.4)
to[out= -40,in=90, looseness=1] (0,0)
(-3,0)
to[out= -70,in=-180, looseness=1] (-1,-1.3)
to[out= 0,in=-90, looseness=1] (0,0)
(-3,0)
to[out= 50,in=180, looseness=1] (-2.3,0)
to[out= 60,in=150, looseness=1.5] (-1.75,1)
(-2.3,0)
to[out= -60,in=-120, looseness=0.9] (-1.5,-0.75)
(-1.75,1)
to[out= -30,in=90, looseness=0.9] (-1.25,0)
to[out= -90,in=60, looseness=0.9] (-1.5,-0.75);
\draw[color=black,shift={(20,-0.5)}, scale=0.9] 
(-1.73,1.7) 
to[out= 180,in=90, looseness=1](-2.8,0.85)
to[out=-90 ,in=-120, looseness=1](-2.2,0)
to[out= 120,in=150, looseness=1.5] (-1.5,1) 
(-2.2,0)
to[out=0,in=180, looseness=0.9] (-1.4,0)
(-1.5,1)
to[out= -30,in=90, looseness=0.9] (-1.3,0.5)
to[out= -90,in=60, looseness=0.9] (-1.4,0)
to[out= -60,in=-90, looseness=0.9](-0.9,0.85)
to[out= 90,in=0, looseness=0.9](-1.73,1.7);
\end{tikzpicture}
\end{center}
\begin{caption}{An example of evolution from a theta--shaped network to an Eyeglasses 
``type B'' passing by a $4$--point formation.}
\end{caption}
\end{figure}

\begin{rem}
As we said in Remark~\ref{nosym}, all these transitions of the networks between these ``dual'' topological shapes are not reversible in time. The angles between the curves are continuous as $t\to T^-$, discontinuous as $t\to T^+$.
\end{rem}

\bibliographystyle{amsplain}
\bibliography{2tripunti}
\end{document}